\documentclass[10pt,a4paper]{article}

\usepackage{amssymb}
\usepackage{amsmath,lipsum}
\usepackage{graphicx}
\usepackage[T1]{fontenc} 
\usepackage[english]{babel}
\usepackage{amsthm,stmaryrd}
\usepackage{tabulary}
\usepackage{booktabs}
\usepackage{mathtools}
\usepackage{tikz}
\usepackage{comment}
\usepackage{hyperref}
\usepackage{shuffle}
\usepackage{float}
\usepackage{tikz-cd}

\usepackage{difftrees}
\tikzset{
  ncone/.pic={
	\draw (0,0)--(0,0.2);
  }
}

\tikzset{
  nctwo/.pic={
    \draw (0,0)--(0,0.2);
	\draw (0.1,0)--(0.1,0.2);
  }
}

\tikzset{
  nctwoW/.pic={
    \draw (0,0.2)--(0,0)--(0.1,0)--(0.1,0.2);
  }
}

\tikzset{
  nctwoWW/.pic={
    \draw (0,0.2)--(0,0)--(0.2,0)--(0.2,0.2);
  }
}

\tikzset{
  ncthreeWW/.pic={
    \draw (0,0.2)--(0,0)--(0.3,0)--(0.3,0.2);
	\draw (0.2,0)--(0.2,0.2);
  }
}

\tikzset{
  ncthree/.pic={
    \draw (0,0)--(0,0.2);
	\draw (0.1,0)--(0.1,0.2);
	\draw (0.2,0)--(0.2,0.2);
  }
}

\tikzset{
  ncthreeW/.pic={
    \draw (0,0.2)--(0,0)--(0.2,0)--(0.2,0.2);
	\draw (0.1,0)--(0.1,0.2);
  }
}

\tikzset{
  ncfour/.pic={
    \draw (0,0)--(0,0.2);
	\draw (0.1,0)--(0.1,0.2);
	\draw (0.2,0)--(0.2,0.2);
	\draw (0.3,0)--(0.3,0.2);
  }
}
\tikzset{
  ncfive/.pic={
    \draw (0,0)--(0,0.2);
	\draw (0.1,0)--(0.1,0.2);
	\draw (0.2,0)--(0.2,0.2);
	\draw (0.3,0)--(0.3,0.2);
	\draw (0.4,0)--(0.4,0.2);
  }
}

\tikzset{
  ncfourW/.pic={
    \draw (0,0.2)--(0,0)--(0.3,0)--(0.3,0.2);
	\draw (0.1,0)--(0.1,0.2);
	\draw (0.2,0)--(0.2,0.2);
  }
}
\tikzset{
  ncfiveW/.pic={
    \draw (0,0.2)--(0,0)--(0.4,0)--(0.4,0.2);
	\draw (0.1,0)--(0.1,0.2);
	\draw (0.2,0)--(0.2,0.2);
	\draw (0.3,0)--(0.3,0.2);
  }
}

\tikzset{
  nconeinsidetwoWW/.pic={
    \path (0,0) pic {nctwoWW}; \path (0.1,0.1) pic {ncone};
  }
}

\tikzset{
  nconeinsidethreeWW/.pic={
    \path (0,0) pic {ncthreeWW}; \path (0.1,0.1) pic {ncone};
  }
}
\tikzset{
  nconeinsidethreerightWW/.pic={
    \draw (0,0.2)--(0,0)--(0.3,0)--(0.3,0.2);
    \draw (0.1,0)--(0.1,0.2); \draw (0.2,0.1)--(0.2,0.3);
  }
}
\tikzset{
  nconeinsidethreeleftWW/.pic={
    \draw (0,0.2)--(0,0)--(0.3,0)--(0.3,0.2);
    \draw (0.2,0)--(0.2,0.2); \draw (0.1,0.1)--(0.1,0.3);
  }
}
\tikzset{
  nctwoWWW/.pic={
    \draw (0,0.2)--(0,0)--(0.3,0)--(0.3,0.2);
  }
}

\tikzset{
  nconeoneinsidetwoWW/.pic={
	\path (0,0) pic {ncone};
    \path (0.1,0) pic {nctwoWW}; 
	\path (0.2,0.1) pic {ncone};
  }
}

\usepackage{forest}
\forestset{
  decor/.style = {
    label/.expanded = {[inner sep = 0.2ex, font=\unexpanded{\tiny}]right:{$#1$}}
  },
  root/.style = {minimum size = 0.1ex},
  decorated/.style = {
    for tree = {
      circle, fill, inner sep = 0.3ex, minimum size = 1.ex,
      grow' = south, l = 0, l sep = 1.2ex, s sep = 0.7em,
      fit = tight, parent anchor = center, child anchor = center,
      delay = {decor/.option = content, content =}
    }
  },
  default preamble = {decorated, root},
  begin draw/.code={\begin{tikzpicture}[baseline={([yshift=-0.5ex]current bounding box.center)}]},
}

\setlength{\textheight}{9.5in}
\setlength{\topmargin}{-25pt}
\setlength{\textwidth}{6.5in}
\setlength{\oddsidemargin}{-7pt}
\setlength{\evensidemargin}{-7pt}

\newtheorem{theorem}{Theorem}[section]
\newtheorem{corollary}{Corollary}[theorem]
\newtheorem{lemma}[theorem]{Lemma}
\newtheorem{proposition}[theorem]{Proposition}
\newtheorem{definition}{Definition}
\newtheorem{example}[theorem]{Example}
\newtheorem{remark}{Remark}
\newtheorem{notation}{Notation}

\DeclareMathOperator{\graft}{\triangleright}

\title{An operadic approach to substitution in Lie--Butcher series}

\author{Ludwig Rahm\footnote{Department of Mathematical Sciences, Norwegian University of Science and Technology (NTNU), 7491 Trondheim, Norway. \texttt{ludwig.rahm@ntnu.no}.}}

\begin{document}

\maketitle

\begin{abstract}
The paper follows an operadic approach to provide a bialgebraic description of substitution for Lie--Butcher series. We first show how the well-known bialgebraic description for substitution in Butcher's $B$-series can be obtained from the pre-Lie operad. We then apply the same construction to the post-Lie operad to arrive at a bialgebra $\mathcal{Q}$. By considering a module over the post-Lie operad, we get a cointeraction between $\mathcal{Q}$ and the Hopf algebra $\mathcal{H}_N$ that describes composition for Lie--Butcher series. We use this coaction to describe substitution for Lie--Butcher series.
\end{abstract}

\tableofcontents

\section{Introduction}
\label{sec:intro}

Many numerical integration methods for differential equations defined on Euclidean spaces have been understood and studied through the formalism of $B$-series introduced by John Butcher \cite{Butcher1972,Butcher2008,CalvoSanz-Serna1994,HairerWanner1974}. Integration methods that can be formulated by $B$-series (so-called $B$-series methods) have been studied with an emphasis on algebraic structures defined on non-planar rooted trees \cite{MclachlanModinMunthe-KaasVerdier2017}. Informally speaking, a $B$-series is a Taylor series with terms indexed by non-planar rooted trees, together with an algebra morphism that maps the trees to a vector field and its derivatives. \\
In his study of Runge--Kutta methods on Lie groups \cite{Munthe-Kaas1995,Munthe-Kaas1998}, Munthe-Kaas defined the notion of Lie--Butcher series (LB-series). They play a role on homogeneous spaces, similar to that of Butcher's $B$-series on Euclidean spaces. The study of LB-series methods emphasises algebraic structures defined on planar rooted trees \cite{CurryEbrahimi-FardMunthe-Kaas2017,LundervoldMunthe-Kaas2011,Munthe-KaasWright2008,Munthe-KaasLundervold2012}. A common theme for these algebraic structures is that they specialise to the corresponding structures for $B$-series methods, when planarity for the trees is removed. A non-planar tree is a tree seen as a graph, a planar tree is a tree endowed with an embedding into the plane. The free pre-Lie algebra is one of the essential structures on non-planar rooted trees. The planar generalization of pre-Lie is the free post-Lie algebra, which is defined over formal Lie brackets of planar rooted trees.  \\
It is of particular interest for the present paper to consider the notions of composition and substitution of LB-series. Connes and Kreimer \cite{ConnesKreimer1998} described the Hopf algebra that governs composition of $B$-series, by using the notion of admissible edge cuts in non-planar rooted trees. The main idea of $B$-series composition is that the flow of a differential equation can be described by a $B$-series and one aims to study the composition of flows as a composition of $B$-series. Munthe-Kaas and Wright \cite{Munthe-KaasWright2008} generalised this to the notion of admissible left edge cuts in planar rooted forests with the goal of describing the Hopf algebra that governs composition of LB-series. Calaque, Ebrahimi-Fard and Manchon \cite{CalaqueEbrahimi-FardManchon2011} described the so-called extraction-contraction bialgebra $\mathcal{H}$ that governs substitution of $B$-series, by using edge contractions in non-planar rooted trees. Calaque et al.~furthermore described a cointeraction of their bialgebra with the Hopf algebra of Connes and Kreimer. The idea behind $B$-series substitution is that a $B$-series, while being a sum over a vector field and its derivatives, can itself also describe a vector field. In this case, it makes sense to consider a $B$-series that has another $B$-series as its vector field, which we call substitution. A recursive formula for substitution in LB-series has been given by Lundervold and Munthe-Kaas \cite{LundervoldMunthe-Kaas2011}. The algebraic picture of a bialgebra cointeracting with the Hopf algebra of Munthe-Kaas and Wright is however not present. Subsitution was also considered in \cite{FloystadMunthe-Kaas}, where Algebro-Geometric methods were used to show that there is a bialgebraic description. Their construction was however not made explicit.

The paper at hand applies operadic methods to obtain a bialgebra of cosubstitution for LB-series. We use a construction by Foissy \cite{Foissy2017}, dualizing operadic composition into a coproduct. We then show that applying this construction to the pre-Lie operad results in the bialgebra $\mathcal{H}$ that was used to describe substitution in $B$-series \cite{CalaqueEbrahimi-FardManchon2011}. The pre-Lie operad is defined by replacing vertices in a non-planar rooted tree by rooted trees, which is also how we think about concrete $B$-series substitution. This perspective motivates us to look at the post-Lie operad, which can be described by replacing vertices in planar rooted trees by Lie polynomials of planar rooted trees. Applying Foissy's construction to the post-Lie operad gives us a bialgebra $\mathcal{Q}$ defined over $S(Lie(\mathcal{PT}))$, the symmetric algebra of Lie polynomials in planar rooted trees. Using the embedding into the space spanned by ordered forests $\mathcal{OF}$ of planar rooted trees, $Lie(\mathcal{PT}) \subset U(Lie(\mathcal{PT}))=\mathcal{OF}$, we endow $\mathcal{OF}$ with a module structure over the post-Lie operad, given by replacing vertices by Lie polynomials. As a matter of fact, replacing vertices by Lie polynomials is how we think about concrete $LB$-series substitution. By dualizing in the same way as in Foissy's construction, the module structure dualizes to a coaction. The latter describes a cointeraction between the bialgebra $\mathcal{Q}$ and the Hopf algebra $\mathcal{H}_N$ of Munthe-Kaas--Wright \cite{LundervoldMunthe-Kaas2011}. We then show how substitution in $LB$-series can be described and computed using this coaction.

\medskip

The structure of the paper is as follow: In section \ref{section::1}, we summarize the definitions and results that the present paper builds upon. In section \ref{section::2}, we construct an operad over non-planar trees. We then prove that our construction provides an alternative description of the pre-Lie operad defined by Chapoton and Livernet in \cite{ChapotonLivernet2001}. The section is concluded by proving a duality between the pre-Lie operad and the coproduct $\Delta_{\mathcal{H}}$ that is used to describe $B$-series substitution. In section \ref{sec:planarSubsOperad}, we construct an operad of Lie brackets over planar trees in a way that is analogous to the operadic construction from section \ref{section::2}. In section \ref{section::3}, we extend the operadic composition from section \ref{sec:planarSubsOperad} to let Lie brackets of planar rooted trees act on forests. We then dualize this to a coaction. In section \ref{section::5}, we prove that the coaction can be used to describe substitution in LB-series. In section \ref{section::4}, we provide a combinatorial picture.

\section{Preliminaries} 
\label{section::1}

We recall some definitions and fundamental results. All algebraic structures are assumed to be defined over some fixed field $\mathbb{K}$ of characteristic zero.

\subsection{Trees and forests} 
\label{ssec:TreesForests}

A non-planar rooted tree is a directed graph with a distinguished vertex, called the root, such that every vertex except the root has exactly one incoming edge. The root has no incoming edges. Vertices without outgoing edges are called leafs. A planar rooted tree is a rooted tree endowed with an embedding into the plane. We will draw trees with the root at the top and edges oriented away from the root. Consider for example the two trees
\begin{align*}
\Forest{[[][[]]]} \quad \text{and} \quad  \Forest{[[[]][]]}.
\end{align*}
They are isomorphic as graphs and hence equal as non-planar rooted trees. However, as the embeddings into the plane are different, they are not equal as planar rooted trees. An unordered sequence of non-planar rooted trees is called a forest. An ordered sequence of planar rooted trees is called an ordered forest. We denote the vector space spanned by all non-planar rooted trees by $\mathcal{T}$, the vector space spanned by all planar rooted trees by $\mathcal{PT}$, the vector space spanned by all forests by $\mathcal{F}$ and the vector space spanned by all ordered forests by $\mathcal{OF}$. The empty forest is denoted $\emptyset$.\\
We introduce the grafting operator $\curvearrowright : \mathcal{T} \otimes \mathcal{T} \to \mathcal{T}$ by defining $\tau_1 \curvearrowright \tau_2$ to be the sum over all ways of adding an edge from some vertex in $\tau_2$ to the root of $\tau_1$. For example:
\allowdisplaybreaks
\begin{align*}
\Forest{[[][]]} \curvearrowright \Forest{[[]]}= \Forest{[[][[][]] ]}+\Forest{[[[[][]]]]}.
\end{align*}
Endowing the space $\mathcal{T}$ with the grafting operator $\curvearrowright$ produces a (left) pre-Lie algebra \cite{Burde2006,Cartier11,ChapotonLivernet2001,Livernet2006,Manchon2009,OudomGuin2008}, meaning that the following so-called (left) pre-Lie identity is satisfied
\allowdisplaybreaks
\begin{align*}
\tau_1 \curvearrowright (\tau_2 \curvearrowright \tau_3) - (\tau_1\curvearrowright \tau_2) \curvearrowright \tau_3 - \tau_2 \curvearrowright (\tau_1 \curvearrowright \tau_3) + (\tau_2 \curvearrowright \tau_1) \curvearrowright \tau_3=0, \; \forall \tau_1,\tau_2,\tau_3 \in \mathcal{T},
\end{align*}
which has an obvious combinatorial interpretation. $(\mathcal{T},\curvearrowright )$ is, in fact, the free pre-Lie algebra on one generator. The pre-Lie identity implies that for all $\tau_1,\tau_2 \in \mathcal{T}$, the commutator $\llbracket \tau_1,\tau_2 \rrbracket := \tau_1\curvearrowright \tau_2 - \tau_2 \curvearrowright \tau_1$ satisfies the Jacobi identity.

We furthermore define on planar rooted trees the left grafting operator $\graft: \mathcal{PT} \otimes \mathcal{PT} \to \mathcal{PT}$ by letting $\tau_1 \graft \tau_2$ denote the sum over all ways of adding an edge from any vertex in $\tau_2$ to the root of $\tau_1$ such that the added edge is leftmost on this vertex with respect to the planar embedding. Note that the left grafting operator is magmatic. Extending it to the free Lie algebra, $Lie(\mathcal{PT})$, generated by $\mathcal{PT}$ via the rules
\allowdisplaybreaks
\begin{align*}
\tau_1 \graft [\tau_2,\tau_3]:=&[\tau_1 \graft \tau_2,\tau_3] + [\tau_2,\tau_1 \graft \tau_3], \\
[\tau_1,\tau_2] \graft \tau_3 :=& \tau_1 \graft (\tau_2 \graft \tau_3) - (\tau_1 \graft \tau_2) \graft \tau_3 - \tau_2 \graft (\tau_1 \graft \tau_3) + (\tau_2 \graft \tau_1)\graft \tau_3, \; \forall \tau_1,\tau_2,\tau_3 \in Lie(\mathcal{PT}).
\end{align*}
produces the free post-Lie algebra \cite{CurryEbrahimi-FardMunthe-Kaas2017,Ebrahimi-FardLundervoldMunthe-Kaas2014,LundervoldMunthe-Kaas2011,Munthe-KaasLundervold2012,Silva2018}. For all $\tau_1,\tau_2 \in \mathcal{PT}$, the commutator $\llbracket \tau_1,\tau_2 \rrbracket := \tau_1 \graft \tau_2 - \tau_2 \graft \tau_1 + [\tau_1,\tau_2]$ satisfies the Jacobi identity. Note that in general a post-Lie algebra with a vanishing Lie bracket reduces to a pre-Lie algebra.\\
Defining the commutator, $[\tau_1,\tau_2]=\tau_1\tau_2-\tau_2\tau_1$, one can identify the universal enveloping algebra of the free post-Lie algebra with $\mathcal{OF}$ as vector space. The associative product becomes concatenation of ordered forests. We extend left grafting to $\mathcal{OF}$ by
\allowdisplaybreaks
\begin{align*}
\tau_1 \graft \tau_2\omega_2:=&\ (\tau_1 \graft \tau_2)\omega_2 + \tau_2(\tau_1 \graft \omega_2), \\
\tau_1\omega_1 \graft \omega_2:=&\ \tau_1 \graft (\omega_1 \graft \omega_2) - (\tau_1 \graft \omega_1) \graft \omega_2, \; \forall \tau_1,\tau_2 \in \mathcal{PT}, \; \forall \omega_1,\omega_2 \in \mathcal{OF}.
\end{align*}
The vector space $\mathcal{OF}$ together with left grafting and concatenation is the free $D$-algebra generated by the single-vertex tree \cite{Munthe-KaasWright2008}. See Definition \ref{def:Dalg} in the following subsection.\\
We define the function $B^+: \mathcal{OF} \to \mathcal{PT}$ given by $B^+(\omega)=\omega \graft \bullet$, where $\bullet$ is the single-vertex tree. For example:
\begin{align*}
B^+(\Forest{[[][]]}\Forest{[[]]}\Forest{[[][[][]]]})=\Forest{[ [[][[][]]] [[]] [[][]]]}.
\end{align*}
The inverse of $B^+$, denoted $B^-: \mathcal{PT} \to \mathcal{OF}$, is given by removing the root together with its outgoing edges from the input tree. The operator $\diamond : \mathcal{OF} \times \mathcal{OF} \to \mathcal{OF}$ given by 
\begin{align}
\label{planarGLprod}
	\omega_1 \diamond \omega_2 = B^-(\omega_1 \graft B^+(\omega_2) )
\end{align}	 
is called the --planar-- Grossman--Larson product. For example:
\allowdisplaybreaks
\begin{align*}
\Forest{[[][]]} \diamond \Forest{[]}\Forest{[[]]}=\Forest{[[][]]}\Forest{[]}\Forest{[[]]}+\Forest{[[[][]]]}\Forest{[[]]}+\Forest{[]}\Forest{[[][[][]]]}+\Forest{[]}\Forest{[[[[][]]]]}.
\end{align*}
Let $\omega_1,\omega_2$ be ordered forests, then we can write them as a sequence of trees:
\begin{align*}
\omega_1=&\ \tau_1^1 \cdots \tau_1^n, \\
\omega_2=&\ \tau_2^1 \cdots \tau_2^m.
\end{align*}
We define the shuffle product $\omega_1 \shuffle\ \omega_2$ as the sum of all the ways to concatenate the trees $\tau_1^1,\dots,\tau_1^n,\tau_2^1,\cdots,\tau_2^m$ into a forest such that $\tau_i^j$ is to the left of $\tau_k^{\ell}$ if $i=k$ and $j \leq \ell$. For example:
\begin{align*}
\Forest{[]}\Forest{[[]]}\shuffle \Forest{[[][]]}\Forest{[[[]]]}=\Forest{[]}\Forest{[[]]}\Forest{[[][]]}\Forest{[[[]]]} + \Forest{[]}\Forest{[[][]]}\Forest{[[]]}\Forest{[[[]]]} + \Forest{[[][]]}\Forest{[]}\Forest{[[]]}\Forest{[[[]]]}+\Forest{[[][]]}\Forest{[]}\Forest{[[[]]]}\Forest{[[]]}+\Forest{[]}\Forest{[[][]]}\Forest{[[[]]]}\Forest{[[]]}+\Forest{[[][]]}\Forest{[[[]]]}\Forest{[]}\Forest{[[]]}.
\end{align*}
The empty forest is the unit for the shuffle product, $\omega \shuffle\ \emptyset = \omega = \emptyset \shuffle\ \omega$.
The shuffle coproduct $\Delta_{\shuffle}: \mathcal{OF} \to \mathcal{OF} \otimes \mathcal{OF}$ is defined by $\Delta_{\shuffle}(\omega)$ being the sum of all $\omega_1 \otimes \omega_2$ such that $\omega_1\shuffle\omega_2$ contains the forest $\omega$. For example:
\begin{align*}
\Delta_{\shuffle}(\Forest{[]}\Forest{[[]]}\Forest{[[][]]})
=&\ \Forest{[]}\Forest{[[]]}\Forest{[[][]]} \otimes \emptyset + \Forest{[]}\Forest{[[]]}\otimes \Forest{[[][]]} + \Forest{[]}\Forest{[[][]]}\otimes \Forest{[[]]} + \Forest{[[]]}\Forest{[[][]]} \otimes \Forest{[]}\\
 \qquad +& \Forest{[]} \otimes \Forest{[[]]}\Forest{[[][]]} + \Forest{[[]]} \otimes \Forest{[]}\Forest{[[]]}\Forest{[[][]]} + \Forest{[[][]]} \otimes \Forest{[]}\Forest{[[]]} + \emptyset \otimes \Forest{[]}\Forest{[[]]}\Forest{[[][]]}.
\end{align*}

\subsection{$D$-algebras}
\label{ssec:Dalgebra}

We now recall the definition of a $D$-algebra \cite{LundervoldMunthe-Kaas2011,Munthe-KaasLundervold2012,Munthe-KaasWright2008}.

\begin{definition}
\label{def:Dalg}
Let $(A,\cdot)$ be a unital associative algebra with unit $1$. If $A$ is furthermore equipped with a non-associative product $\graft$, denote by $\mathcal{D}(A)=\{x \in A: x \graft (a \cdot b)= (x \graft a)\cdot b + a \cdot (x \graft b), \; \forall a,b\in A \}$ the set of derivations in $A$. The triple $(A,\cdot,\graft)$ is then called a $D$-algebra if the following identities hold:
\begin{align*}
1 \graft a =&\ a, \\
a \graft x \in&\ \mathcal{D}(A), \\
x \graft (a \graft b) =&\ (x \cdot a)\graft b + (x \graft a) \graft b,
\end{align*}
for $a,b \in A$ and $x \in \mathcal{D}(A)$.
\end{definition}

A map $\phi: A \to A'$ between two $D$-algebras is called a $D$-algebra morphism if:
\begin{align*}
\phi(a\cdot b)=&\ \phi(a) \cdot \phi(b), \\
\phi(a \graft b)=&\ \phi(a) \graft \phi(b), \\
\phi(\mathcal{D}(A)) \subseteq &\ \mathcal{D}(A'),
\end{align*}
for $a,b \in A$. Note that in addition to the morphism property with respect to both products, we require that derivations are mapped to derivations. \\

In \cite{Munthe-KaasWright2008} it was shown that $(\mathcal{OF},\cdot,\graft)$ is the free $D$-algebra and its derivations are exactly the Lie polynomials. These are the elements generated from planar trees $\mathcal{PT} \subset \mathcal{OF}$ by the commutator bracket $[\tau_1,\tau_2]=\tau_1\tau_2-\tau_2\tau_1$. The Lie polynomials are also exactly the elements that are primitive with respect to the shuffle coproduct, meaning, those elements $\omega \in \mathcal{OF}$ satisfying $\Delta_{\shuffle}(\omega)= \emptyset \otimes \omega + \omega \otimes \emptyset$. \\

\begin{remark}
\label{rmk:vectorfields1}
The notion of $D$-algebra has a geometric origin. Indeed, let $M$ be a manifold. It is well-known that one can endow the space $\mathcal{X}M$ of vector fields over $M$ with a post-Lie structure. The extension of this post-Lie structure to a $D$-algebra describes the differential operators. This is an important example and details can be found in, for example, \cite{LundervoldMunthe-Kaas2011,Munthe-KaasLundervold2012,Munthe-KaasWright2008}.
\end{remark}

\subsection{Operads and bialgebras} 
\label{ssec:operads}

We recall the notion of algebraic operad and its link to bialgebras. The reader is referred to \cite{ChapotonLivernet2001,Foissy2017,ManchonHopf} for details. \\

A (symmetric) operad $\mathcal{P}=\oplus_{n=1}^{\infty}\mathcal{P}(n)$, consists of a sequence of vector spaces, $\mathcal{P}(n)$, together with an action of the symmetry group, $\Sigma_n : \mathcal{P}(n) \to \mathcal{P}(n)$, and a map $\circ : \oplus_{n \geq 1} \mathcal{P}^{\otimes n} \otimes \mathcal{P}(n) \to \mathcal{P}$ satisfying the following restriction:
\begin{itemize}
\item $\circ : \mathcal{P}(i_1) \otimes \dots \otimes \mathcal{P}(i_n) \otimes \mathcal{P}(n)  \to \mathcal{P}(i_1 + \dots + i_n)$.
\item There exists an identity element $1 \in \mathcal{P}(1)$ such that $x \circ 1 =x$ and $1 \cdots 1 \circ y = y$ for all $x,y \in \mathcal{P}$.
\item The associativity relation
\begin{align*}
\lefteqn{(x_{1,1}  \cdots  x_{1,n_1} \circ x_1) \cdots (x_{m,1} \cdots x_{m,n_m} \circ x_m) \circ x}\\
&= x_{1,1} \cdots x_{1,n_1} x_{2,1} \cdots x_{2,n_2} x_{3,1} \cdots x_{m,1} \cdots  x_{m,n_m} \circ (x_1 \cdots x_m \circ x)
\end{align*}
is satisfied.
\item The equivariance conditions
\begin{align*}
x_{\sigma^{-1}(1)} \cdots x_{\sigma^{-1}(n)} \circ \sigma(x)
=&\sigma(x_1 \cdots x_n \circ x), & \sigma \in \Sigma_n, \\
\sigma_1(x_1) \cdots  \sigma_n(x_n) \circ x =& (\sigma_1,\ldots,\sigma_n)(x_1  \ldots x_n \circ x), & \sigma_i \in \Sigma_{|x_i|},
\end{align*}
are satisfied. On the right side of the first equality, we interpret $\sigma$ as acting on $\{1,2,\dots,|x_1|+\dots+|x_n| \}$ by permuting the blocks $\{1,\dots,|x_1|\}, \{|x_1|+\dots+|x_j|+1,\dots,|x_1|+\dots+|x_{j+1}|\}, j=1,\dots,n$. In the second equality, we interpret $(\sigma_1,\ldots,\sigma_n)$ as $s_i$ acting on the i:th block.
\end{itemize}

A (right) module\cite{Fresse2007} $\mathcal{M}=\oplus_{n=1}^{\infty}\mathcal{M}(n)$ over an operad $\mathcal{P}$ consists of a sequence of vector spaces, $\mathcal{M}(n)$, together with an action of the symmery group, $\Sigma_n : \mathcal{M}(n) \to \mathcal{M}(n)$, and a map $\circ:  \mathcal{M}(i_1)\otimes\dots \otimes \mathcal{M}(i_n) \otimes \mathcal{P}(n) \to \mathcal{P}(i_1 + \dots + i_n)$ that satisfies associativity and equivariance.\\ 

A bialgebra $(V,\cdot,\Delta,\eta,\epsilon )$ is a vector space $V$ together with an associative multiplication, $\cdot : V \otimes V \to V$ ($x \cdot (y \cdot z)= (x \cdot y) \cdot z $), a coassociative coproduct, $\Delta: V \to V \otimes V$ ($(Id \otimes \Delta)\Delta=(\Delta \otimes Id)\Delta$), a unit map, $\eta: \mathbb{K} \to V$ ($\eta(1)\cdot x=x$), and the counit, $\epsilon: V \to \mathbb{K}$ ($(Id \otimes \epsilon)\Delta=Id=(\epsilon \otimes Id)\Delta$), satisfying the relations:
\begin{align*}
\Delta(x \cdot y)=&\Delta(x) \cdot \Delta(y),\\
\epsilon(x)\epsilon(y)=&\epsilon(x \cdot y),\\
\Delta(\eta(x))=&(\eta \otimes \eta)(x),\\
Id_{\mathbb{K}} =& \epsilon \circ \eta.
\end{align*}
A graded bialgebra is called connected if $\eta$ is an isomorphism between $\mathbb{K}$ and the set of degree zero elements. A Hopf algebra is defined as a bialgebra equipped with an anti-homomorphism $S: V \to V$ called the antipode satisfying
\begin{align*}
\cdot \circ (S \otimes Id)\Delta=\eta \circ \epsilon = \cdot \circ ( Id \otimes S
)\Delta.
\end{align*}
A connected and graded bialgebra is a Hopf algebra. 

Foissy \cite{Foissy2017} describes how to construct bialgebras from operads. The following construction is especially relevant. Let $\mathcal{P}$ be an operad and consider the map $\circ$, the pre-image of each vector space $\mathcal{P}(n)$ under this map is
\begin{align*}
\circ^{-1}(\mathcal{P}(n))
=\bigoplus_{j=1}^{n} \bigoplus_{k_1+\dots+k_j=n} \mathcal{P}(k_1) \otimes \cdots \otimes \mathcal{P}(k_j) \otimes \mathcal{P}(j) .
\end{align*}
Furthermore, we identify the dual space $\mathcal{P}^{\ast}$ with $\mathcal{P}$ by using the canonical dual pairing and consider the map $\Delta: \mathcal{P} \to T(\mathcal{P}) \otimes \mathcal{P}$, where $T(\mathcal{P})$ is the tensor algebra over $\mathcal{P}$, defined by
\begin{align*}
\langle x_1 \cdots x_n \circ x, x' \rangle = \langle x_1 \cdots x_n \otimes x, \Delta(x') \rangle.
\end{align*}
Then:
\begin{align*}
\Delta(\mathcal{P}(n))  \subseteq \bigoplus_{j=1}^{n} \bigoplus_{k_1+\cdots+k_j=n} \mathcal{P}(k_1) \cdots \mathcal{P}(k_j) \otimes \mathcal{P}(j) .
\end{align*}
Foissy showed that $(T(\mathcal{P}),m_{\text{conc}},\Delta)$ is a graded bialgebra, where $m_{\text{conc}}$ is the concatenation product on $T(\mathcal{P})$ and $\Delta$ is multiplicatively extended to $T(\mathcal{P})$ with respect to this product. \\

We conclude by recalling the important notion of bialgebras in cointeraction \cite{Foissy2017,ManchonAbel}:

\begin{definition} 
\label{def::cointeraction}
We say that two bialgebras $(A,\cdot_{\scriptscriptstyle{A}},\Delta_A,\epsilon_A,\eta_A)$, $(B,\cdot_{\scriptscriptstyle{B}},\Delta_B,\epsilon_B,\eta_B)$ are in cointeraction if $B$ is coacting on $A$ via a map $\rho: A \to B \otimes A$ that satisfies:
\begin{align*}
\rho(1_A)=&1_B \otimes 1_A, \\
\rho(x \cdot_{\scriptscriptstyle{A}} y)=&\rho(x) \cdot_{\scriptscriptstyle{B}} \rho(y), \\
(Id \otimes \epsilon_A)\rho=&1_B \epsilon_A, \\
(Id \otimes \Delta_A)\rho=&m_{\scriptscriptstyle{B}}^{1,3}(\rho \otimes \rho)\Delta_A,
\end{align*}
where 
\begin{align*}
m_{\scriptscriptstyle{B}}^{1,3}(a \otimes b \otimes c \otimes d)=a \cdot_{\scriptscriptstyle{B}}c \otimes b \otimes d.
\end{align*}
\end{definition}

\subsection{$B$-series}
\label{ssec:Bseries}

Let $(A,\cdot)$ denote an arbitrary pre-Lie algebra and introduce a fictitious unit $\mathbf{1}$ such that $\mathbf{1} \cdot a = a \cdot \mathbf{1} =a$ for any $a \in A$. As $(\mathcal{T},\curvearrowright)$ is the free pre-Lie algebra, there exists for any element $a \in A$ a unique pre-Lie morphism $F_a: \mathcal{T} \to A$ defined by $F_a (\bullet)=a$. A $B$-series is then defined as a function
\begin{align*}
B(h,a,\alpha)=&\alpha(\emptyset)\mathbf{1}+\sum_{\tau \in \mathcal{T}}h^{v(\tau)}\frac{\alpha(\tau)}{\sigma(\tau)}F_a(\tau),
\end{align*}
where $h \in \mathbb{K}$ is a constant, $v(\tau)$ denotes the function that counts the number of vertices of the input tree $\tau \in \mathcal{T}$, $\alpha: \mathcal{T}\oplus \mathbb{K}\emptyset \to \mathbb{K}$ is a linear function and $\sigma(\tau)$ is the number of symmetries of the tree $\tau$. \\

\begin{remark}\cite{ChartierHairerVilmart2005} \cite{ChartierHairerVilmart2010}
\label{rmk:vectorfields2}
Let $f: \mathbb{R}^n \to \mathbb{R}^n$ be a vector field, then $f$ and its derivatives form a pre-Lie algebra under composition. The typical $B$-series in numerical integration is going to map into this pre-Lie algebra via the pre-Lie algebra morphism given by $F_f(\bullet)=f$. The map $F_f$ is called the elementary differential. \\
If the $B$-series $B(h,f,\alpha)$ is given by a linear map with $\alpha(\emptyset)=1$, then $B(h,f,\alpha)$ is close to the identity map and describes a flow:
\begin{align*}
B(h,f,\alpha)(y)=y+h\alpha(\bullet)f(y)+h^2\alpha(\Forest{[[]]})f'(f(y))+\frac{h^3}{2}\alpha(\Forest{[[][]]})f''(f(y),f(y))+h^3\alpha(\Forest{[[[]]]})f'(f'(f(y)))+\cdots
\end{align*}
These flows can be composed, the result of which can surprisingly be described by a $B$-series. We call this composition of $B$-series. \\
If the $B$-series $B(h,f,\beta)$ is given by a linear map with $\beta(\emptyset)=0$, then $B(h,f,\beta)$ is close to $f$ and describes a vector field. Since this $B$-series is a vector field, it then makes sense to consider something of the form $B(h,B(h,f,\beta),\alpha)$. This is what we call substitution of $B$-series and it turns out that this can be expressed again as a $B$-series in the vector field $f$.
\end{remark}

The results on composition and substitution of $B$-series come from finding appropriate bialgebra structures on the space of forests, $\mathcal{F}$: the Hopf algebra $\mathcal{H}_{CK}=(\mathcal{F},\cdot,\Delta_{CK})$ by Connes and Kreimer \cite{ConnesKreimer1998}, as well as the extraction-contraction bialgebra $\mathcal{H}=(\mathcal{F},\cdot,\Delta_{\mathcal{H}})$ by Calaque, Ebrahimi-Fard and Manchon \cite{CalaqueEbrahimi-FardManchon2011}. These are equal as algebras, both having the commutative concatenation product. The coproduct $\Delta_{CK}$ is defined by admissible edge cuts. Let $\tau \in \mathcal{T}$ be a non-planar rooted tree and let $c$ be a (possibly empty) subset of edges in $\tau$. We say that $c$ is an admissible edge cut if it contains at most one edge from each path in $x$ that starts in the root and ends in a leaf. Removing the edges in $c$ from $\tau$ produces several connected components, the connected component containing the root of $\tau$ will be denoted by $R^c(\tau)$. The concatenation of the remaining connected components will be denoted by $P^c(\tau)$. The coproduct is then given by
\allowdisplaybreaks
\begin{align*}
\Delta_{CK}(\tau)=\sum_{c \text{ admissible cut}} P^c(\tau) \otimes R^c(\tau)+\tau \otimes \emptyset
\end{align*}
on non-planar rooted trees, and extended to forests by
\allowdisplaybreaks
\begin{align*}
\Delta_{CK}(\tau_1\cdots \tau_n)=\Delta_{CK}(\tau_1)\cdots \Delta_{CK}(\tau_n).
\end{align*}
We illustrate this coproduct with a few examples:
\allowdisplaybreaks
\begin{align*}
\Delta_{CK}(\Forest{[[]]})=\ &\emptyset \otimes \Forest{[[]]} + \bullet \otimes \bullet + \Forest{[[]]}\otimes \emptyset, \\
\Delta_{CK}(\Forest{[[][]]})=\ &\emptyset \otimes  \Forest{[[][]]}+2 \bullet \otimes \Forest{[[]]}+\bullet \bullet \otimes \bullet + \Forest{[[][]]} \otimes \emptyset,\\
\Delta_{CK}(\Forest{[[]]}\Forest{[[][]]})=\ & \emptyset \otimes \Forest{[[]]}\Forest{[[][]]}+\bullet \otimes \bullet \Forest{[[][]]}+2 \bullet \otimes \Forest{[[]]}\Forest{[[]]}+3\bullet \bullet \otimes \bullet \Forest{[[]]}+\bullet \bullet \bullet \otimes \bullet \bullet\\
 +& \Forest{[[]]}\otimes \Forest{[[][]]}+2\Forest{[[]]}\bullet \otimes \Forest{[[]]}+\Forest{[[]]}\bullet \bullet \otimes \bullet + \Forest{[[][]]} \otimes \Forest{[[]]} + \Forest{[[][]]} \bullet \otimes \bullet + \Forest{[[]]} \Forest{[[][]]} \otimes \emptyset.
\end{align*}

The coproduct $\Delta_{\mathcal{H}}$ is defined by contractions of subtrees. Let $\tau \in \mathcal{T}$ be a  non-planar rooted tree and let $(\tau_1,\ldots,\tau_n)$ be a spanning subforest of $\tau$, i.e., each $\tau_i$ is a subtree of $\tau$ and each vertex of $\tau$ is contained in exactly one $\tau_i$. We denote by $\tau/\tau_1\cdots \tau_n$ the tree obtained by contracting each subtree to a single vertex. The coproduct, $\Delta_{\mathcal{H}}$, is then given by
\begin{align} \label{eq::DeltaH}
\Delta_{\mathcal{H}}(\tau)=\sum_{(\tau_1,\ldots,\tau_n) \atop \text{ spanning}\ \text{subforest}} \tau_1\cdots \tau_n \otimes \tau/\tau_1 \cdots \tau_n
\end{align}
and extended to forests multiplicatively
\begin{align*}
\Delta_{\mathcal{H}}(\tau_1\cdots \tau_n)=\Delta_{\mathcal{H}}(\tau_1)\cdots \Delta_{\mathcal{H}}(\tau_n).
\end{align*}
We illustrate the coproduct with a few examples:
\allowdisplaybreaks
\begin{align*}
\Delta_{\mathcal{H}}(\Forest{[[[]]]})=\ &\Forest{[[[]]]} \otimes \bullet + 2\Forest{[[]]}\bullet \otimes \Forest{[[]]} + \bullet \bullet \bullet \otimes \Forest{[[[]]]}, \\
\Delta_{\mathcal{H}}(\Forest{[[][]]})=\ & \Forest{[[][]]} \otimes \bullet + 2 \Forest{[[]]} \bullet \otimes \Forest{[[]]}+\bullet \bullet \bullet \otimes \Forest{[[][]]}, \\
\Delta_{\mathcal{H}}(\Forest{[[[]][]]})=\ &\Forest{[[[]][]]} \otimes \bullet + \bullet \Forest{[[[]]]} \otimes \Forest{[[]]} + \bullet \Forest{[[][]]} \otimes \Forest{[[]]}+2\bullet \bullet \Forest{[[]]} \otimes \Forest{[[][]]} +\Forest{[[]]}\Forest{[[]]} \otimes \Forest{[[]]} + \bullet \bullet \Forest{[[]]} \otimes \Forest{[[[]]]}+\bullet \bullet \bullet \bullet \otimes \Forest{[[[]][]]}.
\end{align*}
The two bialgebras $\mathcal{H}_{CK}$ and $\mathcal{H}$ are in cointeraction \cite{CalaqueEbrahimi-FardManchon2011}. We are now ready to recall two important theorems on $B$-series.

\begin{theorem}
Let $\alpha,\beta$ be characters of $\mathcal{H}_{CK}$. Let $m_{CK}$ denote the concatenation product of $\mathcal{H}_{CK}$, then the composition of $B$-series satisfies
\begin{align*}
B(h,a,\beta) \circ B(h,a,\alpha)=B(h,a,\beta \star_{CK} \alpha),
\end{align*}
where $\star_{CK}$ is the convolution product defined in terms of the coproduct of $\mathcal{H}_{CK}$, meaning
\begin{align*}
\beta \star_{CK} \alpha = m_{CK}(\beta \otimes \alpha)\Delta_{CK}.
\end{align*}
\end{theorem}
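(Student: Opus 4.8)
The strategy is to evaluate the left-hand side by Taylor expansion and to recognize the output as a single $B$-series, whose coefficient functional coincides with $\beta\star_{CK}\alpha$. Since $\alpha,\beta$ are characters of $\mathcal{H}_{CK}$ they send the unit forest $\emptyset$ to $1$, so both $B(h,a,\alpha)$ and $B(h,a,\beta)$ are near-identity maps. Two $B$-series in the same field $a$ agree precisely when their coefficient functionals agree, because by freeness of $(\mathcal{T},\curvearrowright)$ it is enough to test the identity in the universal setting, where the elementary differentials $F_a(\omega)$ attached to distinct trees $\omega$ are linearly independent. Hence I would reduce the theorem to proving, for every tree $\omega$, the numerical identity that the coefficient of $F_a(\omega)$ produced by the composition equals $(\beta\star_{CK}\alpha)(\omega)$.

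The computational core is a single lemma describing how an elementary differential behaves when its argument is itself a near-identity $B$-series. Writing $\phi=B(h,a,\cdot)$ for the inner map and using the recursive description $F_a(B^+(\tau_1\cdots\tau_k))=a^{(k)}(F_a(\tau_1),\dots,F_a(\tau_k))$, I would prove by induction on the number of vertices that $F_a(\tau)\circ\phi$ expands again as a $B$-series, whose trees are obtained from $\tau$ by grafting onto its vertices the trees carried by the increment $\phi(y)-y$. The base case $\tau=\bullet$ is exactly the multivariate Taylor expansion of $a$ about the perturbed argument, each derivative $a^{(j)}$ absorbing $j$ copies of the increment and thereby attaching $j$ new branches at the root; the inductive step follows from the Leibniz rule applied through the recursive formula for $F_a$, distributing the grafting across the arguments $F_a(\tau_i)\circ\phi$.

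I would then reassemble the doubly indexed sum. Feeding the outer coefficients against the expansions of the inner map and regrouping by the final grown tree $\omega$, every contribution is indexed by a way of splitting $\omega$ into a root component that is differentiated and a pruned forest that is grafted on; this is exactly an admissible edge cut $c$, producing the pair $(P^c(\omega),R^c(\omega))$. Reading off which character weights the backbone and which weights the branches, and comparing with $\Delta_{CK}(\omega)=\sum_c P^c(\omega)\otimes R^c(\omega)+\omega\otimes\emptyset$, the total coefficient of $F_a(\omega)$ becomes $m_{CK}(\beta\otimes\alpha)\Delta_{CK}(\omega)=(\beta\star_{CK}\alpha)(\omega)$, which is the claim.

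I expect two points to require genuine care. The first is the orientation of the composition: one must fix the convention for $\circ$ (equivalently, decide which of the two series is evaluated at the other, hence which field is differentiated) so that the root component $R^c$ and the pruned forest $P^c$ are weighted in the order dictated by $m_{CK}(\beta\otimes\alpha)$ rather than the opposite order. The second, and the main obstacle, is the symmetry-factor bookkeeping: the multivariate Taylor expansion introduces multinomial coefficients, while grafting indistinguishable branches overcounts, and one must verify that these combine, via the symmetry factors $\sigma(R^c(\omega))$, $\sigma(P^c(\omega))$ and $\sigma(\omega)$, to reproduce precisely the integer multiplicities appearing in $\Delta_{CK}$, such as the factor $2$ attached to cutting the two edges of the tree with two leaves. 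This identity relating symmetry factors to cut multiplicities is the standard fact underpinning the well-definedness of $\Delta_{CK}$, and checking that the analytic expansion reproduces it is where the real work lies.
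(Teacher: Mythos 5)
The paper does not prove this theorem: it is recalled in the preliminaries as a classical result (due to Butcher, and in Hopf-algebraic form to Connes--Kreimer), so there is no in-paper argument to compare against. Your outline is the standard proof from the $B$-series literature (Taylor-expand the outer elementary differentials at the perturbed point, recognise the grafted trees, regroup by admissible cuts), and it is sound as a strategy; the reduction to comparing coefficients via linear independence of elementary differentials in the free setting is also the right way to make ``two $B$-series agree iff their coefficients agree'' precise. Two caveats. First, on the orientation issue you raise: with the paper's convention $\Delta_{CK}(\omega)=\sum_c P^c(\omega)\otimes R^c(\omega)+\omega\otimes\emptyset$ and the usual reading $f\circ g(y)=f(g(y))$, your expansion puts the \emph{outer} character $\beta$ on the root component $R^c$ and the \emph{inner} character $\alpha$ on the pruned forest $P^c$, i.e.\ it yields $\sum_c\alpha(P^c)\beta(R^c)=(\alpha\star_{CK}\beta)(\omega)$; matching the formula as printed therefore forces one to read $\circ$ with the $\beta$-series applied first (or to swap the tensor legs), so this is not merely a point of care but a convention you must fix explicitly before the two sides can be compared. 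Second, the symmetry-factor bookkeeping --- that the multinomial coefficients from the Taylor expansion, the overcounting from grafting onto indistinguishable vertices, and the normalisations $\sigma(\omega)$, $\sigma(P^c)$, $\sigma(R^c)$ conspire to give exactly the integer multiplicities in $\Delta_{CK}$ --- is correctly identified as the substantive computation, but it is asserted rather than carried out; as it stands your proposal is a faithful roadmap of the standard proof rather than a complete proof, with precisely that combinatorial identity left to verify.
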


\begin{theorem} \label{thm::Bsub}
Let $\alpha,\beta : \mathcal{T}\oplus \mathbb{K}\emptyset \to \mathbb{K}$ be linear maps satisfying $\alpha(\emptyset)=0$. Extend $\alpha$ to $\mathcal{H}$ multiplicatively. Then the substitution of $B$-series satisfies
\begin{align*}
B(h,\frac{1}{h}B(h,a,\alpha),\beta	)=B(h,a,\alpha \star_{\mathcal{H}} \beta),
\end{align*}
where $\star_{\mathcal{H}}$ is the convolution product defined in terms of the coproduct of $\mathcal{H}$.
\end{theorem}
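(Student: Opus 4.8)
The plan is to peel off the outer $B$-series, factor the elementary-differential map through the free pre-Lie algebra, and reduce the whole statement to a single universal identity in trees whose only real content is the symmetry-factor bookkeeping of contractions. First I would set $b:=\frac{1}{h}B(h,a,\alpha)$. Since $\alpha(\emptyset)=0$, the inner series $B(h,a,\alpha)$ carries no constant term, so $b$ is a well-defined element of the completed pre-Lie algebra and equals $F_a(X_\alpha)$ with
\begin{align*}
X_\alpha:=\sum_{\rho\in\mathcal{T}}h^{v(\rho)-1}\frac{\alpha(\rho)}{\sigma(\rho)}\,\rho\in\widehat{\mathcal{T}},
\end{align*}
where $\widehat{\mathcal{T}}$ denotes the graded completion of $(\mathcal{T},\curvearrowright)$. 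Both $F_a$ and $F_b$ are pre-Lie morphisms out of the free pre-Lie algebra $\mathcal{T}$, and $F_a$ extends continuously to $\widehat{\mathcal{T}}$. Letting $\mathrm{Sub}:\widehat{\mathcal{T}}\to\widehat{\mathcal{T}}$ be the unique continuous pre-Lie morphism with $\mathrm{Sub}(\bullet)=X_\alpha$, the morphisms $F_b$ and $F_a\circ\mathrm{Sub}$ agree on the generator $\bullet$ (both send it to $b$), so by the universal property $F_b=F_a\circ\mathrm{Sub}$. Hence
\begin{align*}
B(h,b,\beta)=\beta(\emptyset)\mathbf{1}+\sum_{\omega\in\mathcal{T}}h^{v(\omega)}\frac{\beta(\omega)}{\sigma(\omega)}\,F_a\big(\mathrm{Sub}(\omega)\big),
\end{align*}
and, after matching the constant terms (a direct verification), the theorem reduces to the universal identity
\begin{align*}
\sum_{\omega\in\mathcal{T}}h^{v(\omega)}\frac{\beta(\omega)}{\sigma(\omega)}\,\mathrm{Sub}(\omega)=\sum_{\tau\in\mathcal{T}}h^{v(\tau)}\frac{(\alpha\star_{\mathcal{H}}\beta)(\tau)}{\sigma(\tau)}\,\tau
\end{align*}
in $\widehat{\mathcal{T}}$; applying the linear map $F_a$ then recovers the tree-indexed part of the claim.

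Next I would unwind $\mathrm{Sub}(\omega)$ combinatorially. Since $\mathrm{Sub}$ is a pre-Lie morphism and $\mathcal{T}$ is freely generated by $\bullet$ under grafting, $\mathrm{Sub}(\omega)$ is computed by writing $\omega$ as an iterated graft of copies of $\bullet$ and substituting $X_\alpha$ for each copy; geometrically this grows every vertex of $\omega$ into an inner tree $\tau_i$ and grafts the inner trees together in the pattern prescribed by the edges of $\omega$. Collecting terms according to the resulting tree $\tau$, each such $\tau$ comes equipped with a spanning subforest $(\tau_1,\dots,\tau_n)$ (the grown vertices) whose contraction is $\tau/\tau_1\cdots\tau_n=\omega$; since $n=v(\omega)$ and $\sum_i v(\tau_i)=v(\tau)$, the $h$-powers combine to $h^{v(\omega)}\prod_i h^{v(\tau_i)-1}=h^{v(\tau)}$. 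The expected outcome is
\begin{align*}
\sum_{\omega}h^{v(\omega)}\frac{\beta(\omega)}{\sigma(\omega)}\,\mathrm{Sub}(\omega)=\sum_{\tau}h^{v(\tau)}\Bigg(\sum_{(\tau_1,\dots,\tau_n)\ \text{spanning}}\frac{1}{\sigma(\tau)}\Big(\prod_{i=1}^{n}\alpha(\tau_i)\Big)\beta(\tau/\tau_1\cdots\tau_n)\Bigg)\tau,
\end{align*}
where the inner sum runs over spanning subforests of $\tau$.

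Comparing the parenthesised inner sum with the definition \eqref{eq::DeltaH} of $\Delta_{\mathcal{H}}$ and using that $\alpha$ is extended multiplicatively, it is precisely $m(\alpha\otimes\beta)\Delta_{\mathcal{H}}(\tau)=(\alpha\star_{\mathcal{H}}\beta)(\tau)$. This gives the universal identity and hence, after applying $F_a$, the theorem.

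The hard part will be the symmetry-factor bookkeeping hidden in the displayed formula: I must show that the multiplicities produced by iterated grafting in $\mathrm{Sub}(\omega)$, weighted by $1/(\sigma(\omega)\prod_i\sigma(\tau_i))$, reassemble exactly into $1/\sigma(\tau)$ times the number of spanning subforests of $\tau$ that contract to $\omega$ with components $\tau_i$. I would prove this by passing to labelled (rigid) trees, where grafting and contraction become genuine bijections free of automorphism ambiguities, establish the count there directly, and then descend to isomorphism classes; the factors $1/\sigma(\cdot)$ arise uniformly in this descent. An alternative is an induction on $v(\tau)$ using the recursive grafting definition of $\mathrm{Sub}$, but the labelled-tree argument keeps the symmetry factors the most transparent.
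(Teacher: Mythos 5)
The paper does not actually prove Theorem~\ref{thm::Bsub}: it is recalled as background from Calaque--Ebrahimi-Fard--Manchon, so there is no internal proof to compare against. Your argument is the standard one for this result, and it is structurally the same as the route the paper itself takes later for the Lie--Butcher analogue in Section~\ref{section::5}: there, a substitution morphism $B_{\alpha}$ is shown to agree with $A_{\alpha}$ on the generator and freeness of the $D$-algebra closes the argument, exactly as your $F_b=F_a\circ\mathrm{Sub}$ step uses freeness of $(\mathcal{T},\curvearrowright)$. The architecture is sound: the $h$-power bookkeeping is right, the reduction to a universal identity in $\widehat{\mathcal{T}}$ is legitimate, and the identification of the inner sum with $m(\alpha\otimes\beta)\Delta_{\mathcal{H}}(\tau)$ is correct given the definition \eqref{eq::DeltaH}.

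Two caveats. First, the decisive step --- that the multiplicities produced by $\mathrm{Sub}(\omega)$, weighted by $1/(\sigma(\omega)\prod_i\sigma(\tau_i))$, reassemble into $(1/\sigma(\tau))\cdot\#\{\text{spanning subforests}\}$ --- is only sketched, and as written your proposal is an outline precisely where the mathematical content lies. The labelled-tree plan you describe does work, and it is in fact what the paper formalises in Section~\ref{section::2}: Proposition~\ref{prop::6} is your ``grow each vertex into a tree and redistribute the edges'' description of the pre-Lie operad, and Proposition~\ref{prop::1} is exactly the labelled-to-unlabelled descent, with the $1/n!$ there converting into the $\sigma$-factors via orbit counting ($n!/\sigma(\tau)$ labellings per isomorphism class). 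You could close your gap by invoking that duality rather than redoing the count. Second, watch the convention at the empty forest: you need $(\alpha\star_{\mathcal{H}}\beta)(\emptyset)=\beta(\emptyset)$ for the constant terms to match, which requires the multiplicative extension of $\alpha$ to $\mathcal{H}$ to send the unit $\emptyset$ to $1$ even though $\alpha(\emptyset)=0$ as a map on $\mathcal{T}\oplus\mathbb{K}\emptyset$; your ``direct verification'' should make that explicit, since it is a genuine (if standard) notational trap in this theorem's statement.
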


\begin{remark}
\label{rmk:vectorfields}
The characters of $\mathcal{H}_{CK}$ form a group under $\star_{CK}$. The linear maps $\alpha$ with $\alpha(\emptyset)=0$ act on this group by $\star_{\mathcal{H}}$, meaning that the maps $\alpha \star_{\mathcal{H}} : \mathcal{F}^{\ast} \to \mathcal{F}^{\ast}$ form a subgroup of the automorphism group over characters of $\mathcal{H}_{CK}$, where $\mathcal{F}^{\ast}$ is the linear dual space of $\mathcal{F}$. The cointeraction between $\mathcal{H}_{CK}$ and $\mathcal{H}$ is vital for this action. This way of seeing a subgroup of the automorphism group over characters of $\mathcal{H}_{CK}$ was used by Bruned, Hairer and Zambotti \cite{BrunedhairerZambotti2019,BrunedhairerZambotti2020} to develop a theory of renormalisation of stochastic partial differential equations.
\end{remark}

\subsection{LB-series}
\label{ssec:LBseries}

Let $\mathbf{D}$ denote an arbitrary $D$-algebra and let $a \in \mathcal{D}(\mathbf{D})$ be a derivation of $\mathbf{D}$. By the freeness property of $(\mathcal{OF},\cdot,\graft)$ as a $D$-algebra, there is a unique $D$-algebra morphism defined by $F_a(\bullet)=a$. A LB-series is then defined as a formal sum
\begin{align*}
LB(a,\alpha)=\sum_{\omega \in \mathcal{OF}} \alpha(\omega)F_a(\omega),
\end{align*}
where $\alpha:\mathcal{OF} \to \mathbb{K}$ is a linear map. We say that $LB(a,\alpha)$, or just $\alpha$, is logarithmic if
\begin{align*}
\alpha(\emptyset)=&\ 0, \\
\alpha(\omega_1 \shuffle \omega_2)=&\ 0,
\end{align*}
for all $\omega_1, \omega_2 \in \mathcal{OF}$. We say that $LB(a,\beta)$, or just $\beta$,	 is exponential if
\begin{align*}
\beta(\emptyset)=&\ 1, \\
\beta(\omega_1 \shuffle \omega_2)=&\ \beta(\omega_1)\beta(\omega_2),
\end{align*}
for all $\omega_1,\omega_2 \in \mathcal{OF}$. \\

Similar to how composition of $B$-series is understood with the help of the Hopf algebra by Connes and Kreimer, we capture composition of LB-series with the help of the Hopf algebra $\mathcal{H}_N$ introduced by Munthe-Kaas and Wright \cite{Munthe-KaasWright2008}. The Hopf algebra $\mathcal{H}_N$ is defined over $\mathcal{OF}$, its multiplication is the shuffle product $\shuffle$, and its coproduct $\Delta_N$ is defined by planar left admissible edge cuts. \\
Let $\tau \in \mathcal{PT}$ be a planar rooted tree and let $c$ be a (possibly empty) subset of edges in $\tau$. We say that $c$ is an admissible planar left cut if it contains at most one edge from each path in $\tau$ from the root to a leaf. Furthermore if $e$ is an edge in $c$, then every edge outgoing from the same vertex as $e$ and that is to the left of $e$ in the planar embedding, is also in $c$. Removing the edges in $c$ from $\tau$ produces several connected components, the one containing the root of $\tau$ will be denoted by $\mathrm{R}^c(\tau)$. Connected components that are cut off from the same vertex will be concatenated to an ordered forest respecting the order, and then the resulting ordered forests will be shuffled, which is denoted by $\mathrm{P}^c(\tau)$. The coproduct $\Delta_N$ is defined by
\begin{align}
\label{MKWcoprod}
\Delta_N(\tau)=\sum_{c \text{ planar left} \atop \text{admissible cut}} \mathrm{P}^c(\tau) \otimes \mathrm{R}^c(\tau) + \tau \otimes 1
\end{align}
on planar rooted trees. It is extended to forests by
\begin{align*}
\Delta_N(\omega)=(Id \otimes B^-)\Delta_N(B^+(\omega)).
\end{align*}

Note that this is dual to the planar Grossman--Larson product \eqref{planarGLprod}, meaning that it also satisfies:
\begin{align}
\label{MKWcoproduct}
\Delta_N(\omega)=\sum_{\omega \text{ is a summand} \atop \text{in } \omega_1 \diamond \omega_2} \omega_1 \otimes \omega_2,
\end{align}
for $\omega,\omega_1,\omega_2$ ordered forests.

\begin{remark}
Note that the sum on the righthand side of \eqref{MKWcoproduct} could have been written running over $ \mathcal{OF}$, using the natural pairing $<\omega_1, \omega_2>=\delta_{\omega_1, \omega_2}$ in the summand. In fact, the duality to the Grossman--Larson product could have been taken as the definition of the coproduct, up to the identification that shuffle products appearing on the lefthand side must be evaluated (giving a linear combination of forests). 
\end{remark}

We illustrate the coproduct \eqref{MKWcoproduct} with a few examples:
\allowdisplaybreaks
\begin{align*}
\Delta_N(\Forest{[[][[]][]]})=&1 \otimes \Forest{[[][[]][]]} + \bullet \otimes \Forest{[[][[]]]}+ \bullet \otimes \Forest{[[][][]]}+ \bullet \shuffle \bullet \otimes \Forest{[[][]]}+ \Forest{[]} \Forest{[[]]} \otimes \Forest{[[]]} + \bullet \Forest{[[]]} \bullet \otimes \bullet + \Forest{[[][[]][]]} \otimes 1, \\
\Delta_N(\Forest{[]}\Forest{[[]]}\Forest{[]})=&1 \otimes \Forest{[]}\Forest{[[]]}\Forest{[]} + \bullet \otimes \Forest{[[]]}\Forest{[]}+\bullet \otimes \bullet \bullet \bullet + \bullet \shuffle \bullet \otimes \bullet \bullet + \Forest{[]}\Forest{[[]]} \otimes \Forest{[[]]}+\Forest{[]}\Forest{[[]]}\Forest{[]}\otimes \Forest{[]} + \Forest{[]}\Forest{[[]]}\Forest{[]} \otimes 1, \\
\Delta_N(\Forest{[[][]]}\Forest{[[]]})=&1 \otimes \Forest{[[][]]}\Forest{[[]]} +\bullet \otimes \Forest{[[]]}\Forest{[[]]}+\bullet \otimes \Forest{[[][]]}\Forest{[]} + \bullet \shuffle \bullet \otimes \Forest{[[]]}\Forest{[]} + \bullet \bullet \otimes \Forest{[]}\Forest{[[]]}+ \bullet \bullet \shuffle \bullet \otimes \bullet \bullet\\
 +& \Forest{[[][]]}\otimes \Forest{[[]]}+\Forest{[[][]]}\shuffle \bullet \otimes \bullet + \Forest{[[][]]}\Forest{[[]]} \otimes 1.
\end{align*}

Before we move on to state the composition theorem for LB-series, we want to remark on $D$-algebras generated by vector fields over a manifold. 

\begin{remark}
Typically in applications of {\rm{LB}}-series in geometric integration over a manifold, the $D$-algebra morphism $F_a$ will map from ordered forests $(\mathcal{OF},\cdot,\graft)$ into a $D$-algebra generated by vector fields over a manifold. In this case, logarithmic {\rm{LB}}-series describe vector fields, which are the derivations in the target $D$-algebra. Exponential {\rm{LB}}-series describe flows on the manifold. In this case, composition is understood as composition of flows and this is what we mean concretely by composition of {\rm{LB}}-series.
\end{remark}

\begin{theorem}\cite{Munthe-KaasWright2008} \label{thm::1}
Let $\alpha,\beta$ be characters of $\mathcal{H}_N$, then the composition of {\rm{LB}}-series satisfies
\begin{align*}
LB(a,\beta)\circ LB(a,\alpha)=LB(a,\beta \star_N \alpha),
\end{align*}
where $\star_N$ is the convolution product defined in terms of the coproduct \eqref{MKWcoproduct} of $\mathcal{H}_N$. This describes a group structure on the set of exponential {\rm{LB}}-series.
\end{theorem}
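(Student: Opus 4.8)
The plan is to realize each LB-series as the image under the free $D$-algebra morphism $F_a$ of a grouplike element in a completion of $\mathcal{OF}$, and to show that composition of the corresponding flows is governed by the planar Grossman--Larson product $\diamond$. Since $\Delta_N$ is by construction dual to $\diamond$ (see \eqref{MKWcoproduct}), passing to coefficient functionals converts $\diamond$ into the convolution $\star_N$, which is precisely the claimed formula. The argument thus splits into a purely formal duality part and a geometric part, and I expect the latter to carry the real content.

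First I would make the identification between exponential characters and grouplike elements precise. Working in the graded completion $\widehat{\mathcal{OF}}$, I associate to a linear map $\alpha$ the formal series $e_\alpha = \sum_{\omega \in \mathcal{OF}}\alpha(\omega)\,\omega$, so that $LB(a,\alpha)=F_a(e_\alpha)$. A direct pairing computation against $\Delta_{\shuffle}$ shows that $\alpha$ is exponential (a character of $\mathcal{H}_N$) if and only if $e_\alpha$ is grouplike for the shuffle coproduct, $\Delta_{\shuffle}(e_\alpha)=e_\alpha\otimes e_\alpha$. Dualizing \eqref{MKWcoproduct} then yields, purely formally, the identity $e_\beta \diamond e_\alpha = e_{\beta\star_N\alpha}$: the coefficient of a forest $\omega$ in $e_\beta\diamond e_\alpha$ is $\sum_{\omega\ \text{in}\ \omega_1\diamond\omega_2}\beta(\omega_1)\alpha(\omega_2)=(\beta\otimes\alpha)\Delta_N(\omega)=(\beta\star_N\alpha)(\omega)$. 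This is the algebraic heart of the statement and uses only the duality between $\diamond$ and $\Delta_N$.

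The remaining, and main, step is the geometric bridge: to show that applying $F_a$ turns the Grossman--Larson product of grouplike series into composition of the associated flows, $F_a(e_\beta\diamond e_\alpha)=LB(a,\beta)\circ LB(a,\alpha)$. Here I would use that $F_a$ is a morphism of $D$-algebras, so it intertwines $\graft$ and concatenation, together with the definition $\omega_1\diamond\omega_2=B^-(\omega_1\graft B^+(\omega_2))$ and $B^+(\omega)=\omega\graft\bullet$, $F_a(\bullet)=a$. Unwinding these, $F_a(e_\alpha)$ acts on the target $D$-algebra as the pullback along the flow of the vector field represented by $\alpha$, and the $D$-algebra axioms of Definition \ref{def:Dalg} encode the interaction of derivations with products that makes composition of these pullbacks equal to the image of $\diamond$. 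Combining this bridge with the formal identity above gives $LB(a,\beta)\circ LB(a,\alpha)=F_a(e_\beta\diamond e_\alpha)=F_a(e_{\beta\star_N\alpha})=LB(a,\beta\star_N\alpha)$.

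Finally, for the group structure I would note that $\mathcal{H}_N$ is graded by the number of vertices and connected, hence a Hopf algebra with an antipode $S$; its characters form a group under $\star_N$ with unit $\epsilon$ and inverse $\alpha\mapsto\alpha\circ S$. Since the characters are exactly the exponential maps and $\star_N$ corresponds to composition by the formula just established, the exponential LB-series are closed under composition and inversion, giving the asserted group. The step I expect to be the main obstacle is the geometric bridge of the third paragraph: making rigorous that composition of flows is represented by $\diamond$ — including the contravariance of pullback, which pins down the order of the two factors, and the convergence and formal-series bookkeeping in $\widehat{\mathcal{OF}}$ — whereas the combinatorial duality is routine.
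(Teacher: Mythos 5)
The paper does not prove this statement; it is recalled from \cite{Munthe-KaasWright2008} as a preliminary, so there is no internal proof to compare against. Your outline is the standard argument from that reference: the identification of exponential maps with $\Delta_{\shuffle}$-grouplike series, the duality computation $e_\beta \diamond e_\alpha = e_{\beta\star_N\alpha}$, and the graded-connected Hopf algebra argument for the group structure are all correct, and you rightly locate the real content in the geometric bridge identifying composition of flows with the image of the Grossman--Larson product under $F_a$ (including the order-of-factors issue coming from contravariance of pullback). That bridge remains a sketch in your write-up, but you have identified it accurately and the overall route is the right one.
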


By substituting a LB-series into another LB-series, we mean something of the form $LB(LB(a,\alpha),\beta)$, i.e., replacing the derivation $a$ in the target $D$-algebra by another LB-series expressed in $a$. This only makes sense if the LB-series $LB(a,\alpha)$ is again a derivation, which happens exactly when it is logarithmic. The aim of the sequel is now to find a Lie--Butcher version of Theorem \ref{thm::Bsub}, which describes substitution in $B$-series.

\section{A substitution operad of non-planar rooted trees} 
\label{section::2}

In this section we shall construct an operad of non-planar rooted trees that is dual to the coproduct $\Delta_{\mathcal{H}}$. We then see in Proposition \ref{prop::6} that this construction amounts to a different way of describing the pre-Lie operad defined by Chapoton and Livernet in \cite{ChapotonLivernet2001}. \\

Let $\mathcal{T}_n$ denote the vector space spanned by all non-planar rooted trees with exactly $n$ vertices, together with a bijection between the set $\{1,\dots,n \}$ and the vertices of a tree in $\mathcal{T}_n$. We consider this bijection as a labelling of the vertices. Let the symmetric group $\Sigma_n$ act on $\mathcal{T}_n$ by permuting the labels of the vertices. Write $[x]$ for the orbit of $x \in \mathcal{T}_n$ under $\Sigma_n$. We consider this as an unlabelled tree. Define the equivalence relation $x \sim y \iff [x]=[y]$ on $\mathcal{T}_n$. Let $\hat{\mathcal{T}}=\sum_{n=1}^{\infty}\mathcal{T}_n$, then $\hat{\mathcal{T}}/\sim$ can be identified with $\mathcal{T}$. This identification is the bosonic Fock functor \cite{aguiar2010monoidal}. \\
We will now define an operad over $\hat{\mathcal{T}}$, consider $x \in \mathcal{T}_n$ and $x_1,\dots,x_n \in \hat{\mathcal{T}}$. Each of the trees in $x,x_1,\dots,x_n$ has a factorization in terms of the single-vertex tree and the grafting product in the free pre-Lie algebra. Define
\begin{align*}
x_1 \cdots x_n \circ x
\end{align*}
to be the result of replacing each occurrence of the single-vertex tree corresponding to vertex number $i$ in the factorization of $x$ by (the factorization of) $x_i$, for all $i=1,\dots,n$. This is well-illustrated by an example.

\begin{example} \label{example::1}
Let
\allowdisplaybreaks
\begin{align*}
x=&\Forest{[1[3][2]]}=\ \Forest{[2]} \curvearrowright ( \Forest{[3]} \curvearrowright \Forest{[1]}) - (\Forest{[2]} \curvearrowright \Forest{[3]}) \curvearrowright \Forest{[1]}, \\
x_1=&\Forest{[1[2]]}, \; x_2= \Forest{[3[2][1[4]]]}, \; x_3=\Forest{[2[1[4[3]]]]}.
\end{align*}
Then:
\allowdisplaybreaks
\begin{align*}
x_1x_2x_3 \circ x =&\ x_2 \curvearrowright ( x_3 \curvearrowright x_1) - (x_2 \curvearrowright x_3) \curvearrowright x_1 \\
=&\ \Forest{[1[2][8[7[10[9]]]][5[3][4[6]]]]}+\Forest{[1[2[5[3][4[6]]]][8[7[10[9]]]]]}+\Forest{[1[2[8[7[10[9]]]]][5[3][4[6]]]]}+\Forest{[1[2[8[7[10[9]]]][5[3][4[6]]]]]}.
\end{align*}
\end{example}

The following proposition states that the combinatorial description of the operad is that of the pre-Lie operad.

\begin{proposition} \label{prop::6}
The expression
\begin{align*}
x_1 \cdots x_n \circ x
\end{align*}
evaluates to the sum of all possible trees obtained by replacing vertex $i$ in $x$ by the tree $x_i$, for all $i=1,\dots,n$. The incoming edge to vertex $i$ becomes incoming to the root of $x_i$. The edges outgoing from vertex $i$ become outgoing from any vertex of $x_i$.
\end{proposition}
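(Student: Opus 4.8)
The plan is to recognise the operation $\circ$ as the evaluation of a pre-Lie morphism and to match it against the insertion rule by an inductive use of a single compatibility identity for the grafting product $\curvearrowright$.

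First I would make the definition precise while simultaneously establishing that it is well defined. A tree $x \in \mathcal{T}_n$ with distinct labels $1,\dots,n$ is exactly a multilinear element of the free pre-Lie algebra $\mathrm{PL}(g_1,\dots,g_n)$ on $n$ generators, namely the tree whose vertex $i$ is decorated by $g_i$; write $M \subseteq \mathrm{PL}(g_1,\dots,g_n)$ for the subspace spanned by such trees. By the universal property of the free pre-Lie algebra, the assignment $g_i \mapsto x_i \in \mathcal{T}$ extends to a unique pre-Lie morphism $\Phi \colon \mathrm{PL}(g_1,\dots,g_n) \to (\mathcal{T},\curvearrowright)$. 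Substituting the factorization of each $x_i$ for the single vertex $i$ in any factorization of $x$ and evaluating in $(\mathcal{T},\curvearrowright)$ is precisely the computation of $\Phi(x)$; since $\Phi$ is a genuine morphism, the outcome does not depend on the chosen factorization, so $x_1\cdots x_n \circ x = \Phi(x)$ is well defined.

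Next, write $\mathrm{ins}(x)$ for the right-hand side of the proposition — the sum of all trees obtained by replacing each vertex $i$ of $x$ by $x_i$, routing the incoming edge of $i$ into the root of $x_i$ and each edge outgoing from $i$ out of an arbitrary vertex of $x_i$ — extended linearly over the basis of trees to a map $\mathrm{ins}\colon M \to \mathcal{T}$. The heart of the argument is the compatibility lemma
\[
\mathrm{ins}(a \curvearrowright b) = \mathrm{ins}(a) \curvearrowright \mathrm{ins}(b),
\]
for labelled trees $a,b$ on disjoint label sets. To prove it I would expand both sides and match summands by tracking edges. On the left, $a \curvearrowright b$ attaches the root of $a$ below some vertex $w$ of $b$; after insertion this connecting edge — outgoing from $w$ and incoming to the root of $a$ — emanates from an arbitrary vertex of $x_w$ and enters the root of $x_{\mathrm{root}(a)}$, while every other edge lies inside the $a$-part or the $b$-part and is inserted independently. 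On the right, $\mathrm{ins}(a)\curvearrowright\mathrm{ins}(b)$ grafts the root of an arbitrary insertion tree for $a$ onto an arbitrary vertex of an arbitrary insertion tree for $b$; since the vertex set of an insertion tree for $b$ is the disjoint union over $w$ of the vertices of $x_w$, the two sides enumerate exactly the same trees with the same multiplicities. This edge bookkeeping — in particular, seeing that the ``outgoing edge to any vertex'' rule on the left reproduces precisely the ``graft onto any vertex'' of $\curvearrowright$ on the right — is the step I expect to be the main obstacle; the surrounding structure is then forced.

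Finally I would conclude by reducing to a factorization. Fix $x \in \mathcal{T}_n$ and choose any grafting factorization $x = P(g_1,\dots,g_n)$, a linear combination of iterated graftings in which each generator occurs exactly once per monomial. By definition of $\Phi$ as a morphism, $\Phi(x) = P(x_1,\dots,x_n)$. On the other side, since $x = P(g_1,\dots,g_n)$ holds as an identity in $M$ and $\mathrm{ins}$ is linear, we have $\mathrm{ins}(x) = \mathrm{ins}\bigl(P(g_1,\dots,g_n)\bigr)$; pushing $\mathrm{ins}$ through each grafting by repeated application of the compatibility lemma, and using $\mathrm{ins}(g_i) = x_i$, gives $\mathrm{ins}(x) = P(x_1,\dots,x_n)$ as well. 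Hence $x_1\cdots x_n \circ x = \Phi(x) = \mathrm{ins}(x)$, which is exactly the asserted combinatorial description, with the independence from the factorization $P$ guaranteed by the first paragraph.
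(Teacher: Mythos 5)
Your proof is correct, and its combinatorial core is the same observation the paper uses: an edge from vertex $j$ to vertex $i$ in $x$ records that $i$ was pre-Lie grafted onto a subexpression containing $j$, so after substitution this becomes $x_i \curvearrowright$-grafted onto $x_j$, which is exactly the ``root of $x_i$ attaches to an arbitrary vertex'' rule. Where you go further is in the packaging: the paper's proof is a three-sentence version of your compatibility lemma $\mathrm{ins}(a\curvearrowright b)=\mathrm{ins}(a)\curvearrowright\mathrm{ins}(b)$, applied informally, and it never addresses the fact that the factorization of $x$ into graftings of single vertices is not unique. Your identification of $x_1\cdots x_n\circ x$ with $\Phi(x)$ for the pre-Lie morphism $\Phi\colon \mathrm{PL}(g_1,\dots,g_n)\to(\mathcal{T},\curvearrowright)$, $g_i\mapsto x_i$, settles that well-definedness issue cleanly via the universal property, and the induction over a chosen factorization then makes the paper's heuristic rigorous. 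The price is length; what it buys is that the definition of the operad composition is actually shown to be independent of the chosen factorization, a point the paper takes for granted.
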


\begin{proof}
Recall the definition of the free pre-Lie product, $x_i \curvearrowright x_j$ is the sum of all trees obtained by adding an edge from any vertex in $x_j$ to the root of $x_i$. Furthermore note that there is an edge from vertex $j$ to vertex $i$ in $x$, if and only if vertex $i$ is pre-Lie grafted onto a subtree of $x$ that contains the vertex $j$. Hence when vertex $i$ is replaced by $x_i$ and vertex $j$ is replaced by $x_j$, we get $x_i$ pre-Lie grafted onto $x_j$.
\end{proof}

We continue with Example \ref{example::1} to illustrate the combinatorial picture.

\begin{example} \label{example::2}
The expression
\begin{align*}
\Forest{[1[2]]} \Forest{[3[2][1[4]]]}\Forest{[2[1[4[3]]]]} \circ \Forest{[1[3][2]]}
\end{align*}
can be visualized by putting each input tree inside its respective vertex:\\
\begin{center}
\begin{tikzpicture}
\node[circle,draw] at (0,0) (node1) {\Forest{[1[2]]}};
\node[circle,draw] at(-1.5,-1.5) (node2) {\Forest{[5[4][3[6]]]}};
\node[circle,draw] at (1.5,-1.5) (node3) {\Forest{[8[7[10[9]]]]}};

\draw[-] (node1) -- (node2);
\draw[-] (node1) -- (node3);
\end{tikzpicture}
\end{center}

The four terms we see in Example \ref{example::1} come from the four different ways of grafting the edges from $\Forest{[1[3][2]]}$ onto some vertex of $\Forest{[1[2]]}$.
\end{example}

The visualization in Example \ref{example::2} below illustrates a duality to the coproduct $\Delta_{\mathcal{H}}$, which we will make precise in the following proposition:

\begin{proposition} \label{prop::1}
Let $x \in \hat{\mathcal{T}}$ be a rooted tree, then
\begin{align*}
\Delta_{\mathcal{H}}([x])=&\sum_{[x] \text{ is a summand } \atop \text{in } [x_1\cdots x_n \circ x']} \frac{1}{n!} [x_1]\cdots [x_n] \otimes [x'],
\end{align*}
for $x_1,\ldots,x_n,x'$ labeled trees.
\end{proposition}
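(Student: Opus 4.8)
The proposition is precisely the instance of Foissy's operad-to-bialgebra duality applied to the operad $(\hat{\mathcal{T}},\circ)$ of this section: it asserts that the coproduct dual to $\circ$, in the sense of the pairing $\langle x_1\cdots x_n \circ x', x\rangle = \langle x_1\cdots x_n \otimes x', \Delta(x)\rangle$, coincides with the extraction-contraction coproduct $\Delta_{\mathcal{H}}$. The plan is to prove this combinatorially: I would expand the right-hand side using the explicit description of $\circ$ from Proposition \ref{prop::6} and match it term by term with the definition \eqref{eq::DeltaH} of $\Delta_{\mathcal{H}}$. The guiding observation is that the substitution operation described in Proposition \ref{prop::6} is exactly the inverse of extraction-contraction: blowing up the vertices of a tree into subtrees undoes the contraction of a spanning subforest.

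First I would unfold the summation condition. By Proposition \ref{prop::6}, a summand of $[x_1\cdots x_n\circ x']$ is an unlabeled tree obtained from $x'$ by replacing each vertex $i$ with $x_i$, attaching the incoming edge of $i$ to the root of $x_i$ and redistributing the outgoing edges of $i$ among the vertices of $x_i$. Hence $[x]$ is a summand of $[x_1\cdots x_n\circ x']$ if and only if $x$ admits a spanning subforest whose blocks are isomorphic to $x_1,\ldots,x_n$ and whose contraction is isomorphic to $x'$. This produces a correspondence between the terms indexing the right-hand side and the spanning subforests $(\tau_1,\ldots,\tau_n)$ indexing $\Delta_{\mathcal{H}}([x])$: a subforest contributes $\tau_1\cdots\tau_n\otimes x/\tau_1\cdots\tau_n$ to $\Delta_{\mathcal{H}}$ and corresponds to the term with $x_i=\tau_i$ and $x'=x/\tau_1\cdots\tau_n$. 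I would check that the edge-redistribution rule of Proposition \ref{prop::6} is exactly what is needed for the substitution to recover $x$ from a given subforest, so that the two index sets genuinely coincide.

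The step I expect to be the main obstacle is the reconciliation of the symmetry factors, which is the source of the prefactor $\tfrac{1}{n!}$. On the right-hand side the sum ranges over labeled trees $x_1,\ldots,x_n,x'$, while $\Delta_{\mathcal{H}}$ is expressed over unlabeled forests via the bosonic Fock functor $\hat{\mathcal{T}}/{\sim}\cong\mathcal{T}$. Fixing one unlabeled spanning subforest of $x$ together with its contraction, I would count the labeled tuples $(x_1,\ldots,x_n,x')$ mapping to the same $[x_1]\cdots[x_n]\otimes[x']$: the internal relabelings of each $x_i$ and of $x'$ are quotiented out by $[\,\cdot\,]$, while the $n!$ relabelings of the $n$ vertices of $x'$ — equivalently, of the slots into which $x_1,\ldots,x_n$ are substituted — all yield the same unlabeled term and are exactly normalized away by $\tfrac{1}{n!}$. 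The delicate point is to verify that this normalization remains consistent in the presence of tree automorphisms and of repeated blocks, and that after it the integer multiplicities of $\Delta_{\mathcal{H}}$ (such as the coefficients $2$ appearing in the examples of Section \ref{ssec:Bseries}) are reproduced exactly; checking a couple of these worked examples would confirm the bookkeeping.
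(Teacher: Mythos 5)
Your proposal follows essentially the same route as the paper's own proof: both establish the two-way correspondence between summands of $[x_1\cdots x_n\circ x']$ containing $[x]$ and spanning subforests of $[x]$ with contraction $[x']$, and both attribute the prefactor $\tfrac{1}{n!}$ to the $n!$ labellings of $x'$ (equivalently, the orderings of the blocks) that all produce the same unlabelled spanning subforest. The symmetry-factor bookkeeping you flag as the delicate point is handled in the paper exactly as you describe, so the plan is sound and matches.
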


\begin{proof}
If $[x]$ is a summand in $[x_1\cdots x_n \circ x']$, then $[x_1] \cdots [x_n]$ is a spanning subforest of $[x]$. Furthermore, $[x]/[x_1] \cdots [x_n] = [x']$. \\
Conversely, let $[x_1] \cdots [x_n]$ be an arbitrary spanning subforest of $[x]$. Then for any labelling of the vertex of $[x]/[x_1] \cdots [x_n]$, there is an ordering of $x_1 \cdots x_n$ such that the rooted tree that was contracted into vertex $i$ is in position $i$, for $i=1,\ldots,n$. Then, for any choice of labelling on each $x_i$, we have that $[x_1 \cdots x_n \circ [x]/[x_1] \cdots [x_n]]$ is the sum of all possible trees obtained by replacing each vertex in $[x]/[x_1] \cdots [x_n]$ by the rooted tree it was contracted from. In particular, $[x]$ must be a summand in this sum. \\
Finally, we note that each of the $n!$ different labellings of $[x']$ gives us the same spanning subforest.
\end{proof}

\begin{remark}
The coproduct construction in Proposition \ref{prop::1} is a symmetrization of the bialgebra construction by Foissy that we discussed in Section \ref{ssec:operads}. The observation that the bialgebra $\mathcal{H}$ can be obtained this way was mentioned without proof by Foissy in \cite{Foissy2017}.
\end{remark}

\section{A substitution operad of planar rooted trees}
\label{sec:planarSubsOperad}

Motivated by the observation that the coproduct $\Delta_{\mathcal{H}}$ describing substitution in $B$-series can be seen as dual to a substitution operad of non-planar trees, we shall in this section construct a substitution operad of Lie polynomials of planar rooted trees. Recall that substitution in LB-series is described by a map that sends the single-vertex tree to a Lie polynomial and then generates to be a $D$-algebra morphism. Furthermore, recall that the Lie polynomials are in bijection with the underlying free post-Lie algebra, which can be represented by formal Lie brackets on planar rooted trees. \\

Let $Lie(\mathcal{PT})_n$ denote the vector space spanned by all Lie brackets over planar trees, such that there are in total exactly $n$ vertices in the brackets, together with a bijection between the set $\{1,\ldots,n \}$ and the vertices. This bijection provides a labelling of the vertices. Let the symmetric group $\Sigma_n$ act on $Lie(\mathcal{PT})_n$ by permuting the labels of the vertices. Write $[\omega]$ for the orbit of $\omega$ under $\Sigma_n$, we consider this as an unlabelled element. Define the equivalence relation $\omega_1 \sim \omega_2 \iff [\omega_1]=[\omega_2]$ on $Lie(\mathcal{PT})_n$ and let $\widehat{Lie(\mathcal{PT})}=\sum_{n=1}^{\infty}Lie(\mathcal{PT})_n$, then $\widehat{Lie(\mathcal{PT})}/\sim$ can be identified with $Lie(\mathcal{PT})$. \\

We will now define an operad over $\widehat{Lie(\mathcal{PT})}$, consider $\omega \in Lie(\mathcal{PT})_n$ and $\omega_1,\ldots,\omega_n \in \widehat{Lie(\mathcal{PT})}$. Each element of $\omega,\omega_1,\ldots,\omega_n$ can be expressed as a monomial in terms of the Lie bracket, the post-Lie grafting and the single-vertex tree. Define
\begin{align*}
\omega_1 \cdots \omega_n \circ \omega
\end{align*}
to be the result of replacing each occurrence of the single-vertex tree corresponding to vertex number $i$ in the expression for $\omega$ by the expression for $\omega_i$, for all $i=1,\ldots,n$. This is well-illustrated by an example.

\begin{example}
Let
\allowdisplaybreaks
\begin{align*}
\omega=[\Forest{[1]},[\Forest{[2[3]]},\Forest{[4[6][5]]}]]=[\Forest{[1]},[\Forest{[3]}\graft \Forest{[2]},\Forest{[5]}\graft (\Forest{[6]} \graft \Forest{[4]})-(\Forest{[5]} \graft \Forest{[6]})\graft \Forest{[4]}]],
\end{align*}
then:
\allowdisplaybreaks
\begin{align*}
\omega_1\cdots \omega_6 \circ \omega=[\omega_1,[\omega_3\graft \omega_2,\omega_5\graft (\omega_6 \graft \omega_4)-(\omega_5 \graft \omega_6)\graft \omega_4]].
\end{align*}
\end{example}

\begin{proposition} \label{prop::7}
$(\widehat{Lie(\mathcal{PT})},\circ)$ is an operad.
\end{proposition}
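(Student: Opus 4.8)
The goal is to verify that the operation $\omega_1 \cdots \omega_n \circ \omega$, defined by substituting the expressions $\omega_i$ into the single-vertex occurrences of $\omega$, satisfies the operad axioms: existence of an identity, associativity, and equivariance. The plan is to check each axiom against the definition given, relying on the fact that every element of $\widehat{Lie(\mathcal{PT})}$ has a well-defined expression as a monomial in the single-vertex tree, the post-Lie grafting $\graft$, and the Lie bracket, so that substitution is well-defined on these expressions.

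First I would address the identity element, which is the single labelled vertex $\Forest{[1]} \in Lie(\mathcal{PT})_1$. The relation $\omega \circ \Forest{[1]} = \omega$ holds because replacing the unique vertex of $\Forest{[1]}$ by $\omega$ returns $\omega$, and $\Forest{[1]} \cdots \Forest{[1]} \circ \omega = \omega$ holds because replacing each single-vertex occurrence in the expression for $\omega$ by a single vertex again leaves the expression unchanged. Next I would verify associativity. The key observation is that the definition of $\omega_1 \cdots \omega_n \circ \omega$ is purely an operation of literal substitution of formal expressions into the variable slots (the labelled leaves/vertices) of another formal expression. Substitution of expressions into expressions is associative for the same reason composition of functions is: performing the inner substitutions first and then the outer one, versus distributing the outer substitution across the already-substituted pieces, produces the identical final monomial. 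I would make this precise by tracking how vertex labels in the nested expression correspond across the two sides of the associativity relation, matching the block structure $x_{1,1}\cdots x_{1,n_1}, \ldots, x_{m,1}\cdots x_{m,n_m}$ to the factorization dictated by the associativity axiom.

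The subtle point, and the step I expect to be the main obstacle, is checking that the substitution is \emph{well-defined} on equivalence classes and compatible with the bracket/grafting relations. An element of $Lie(\mathcal{PT})$ does not have a unique monomial expression in $\graft$ and $[\cdot,\cdot]$; rather, these expressions are subject to the post-Lie relations (the two defining identities for $\graft$ on $Lie(\mathcal{PT})$) and the Jacobi/antisymmetry relations of the Lie bracket. I must therefore show that if two expressions for $\omega$ are equal in $Lie(\mathcal{PT})$, then substituting the $\omega_i$ into both yields equal results, and similarly for the inputs $\omega_i$. This follows because substitution is a Lie-algebra and post-Lie morphism in each argument: replacing single vertices respects the bracket (by definition it acts on the monomial expression, and the bracket relations are preserved under substitution of sub-expressions) and respects $\graft$ through its defining rules. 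Concretely, I would argue that the map induced by substitution is the unique extension compatible with the free post-Lie structure, so relations are automatically respected.

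Finally I would verify the two equivariance conditions. The symmetric group $\Sigma_n$ acts by permuting vertex labels, and substitution treats the labels $1,\ldots,n$ simply as addresses specifying which input $\omega_i$ replaces which slot. Thus permuting the labels of $\omega$ and correspondingly permuting the ordering of the inputs $\omega_1,\ldots,\omega_n$ (the first equivariance condition), or permuting the internal labels of each input $\omega_i$ (the second, block-wise condition), commutes with the substitution operation by a direct bookkeeping argument on labels. Since all three families of axioms reduce to verifiable properties of formal substitution that are insensitive to the choice of monomial representative, this establishes that $(\widehat{Lie(\mathcal{PT})},\circ)$ is an operad.
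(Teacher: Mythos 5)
Your proof follows essentially the same route as the paper's: the identity is immediate, associativity is the associativity of formal substitution (performing inner substitutions first versus distributing the outer one yields the same expression), and equivariance is a bookkeeping argument on labels. The extra care you take over well-definedness on equivalence classes of monomial expressions is a point the paper only makes explicit in the later proposition on the module structure over $\widehat{\mathcal{OF}}$, so it is a welcome addition rather than a divergence from the paper's argument.
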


\begin{proof}
It is clear that the single-vertex tree is an identity element. It remains to show associativity and equivariance. \\
We have that
\begin{align*}
(\omega_{1,1} \cdots \omega_{1,n_1} \circ \omega_1) \cdots (\omega_{n,1}\cdots \omega_{n,n_l} \circ \omega_n) \circ \omega
\end{align*}
is the expression obtained by replacing vertex $i$ in $\omega$ by $\omega_{i,1} \cdots \omega_{i,n_i} \circ \omega_i$, for $i=1,\dots,l$. Furthermore we have that
\begin{align*}
\omega_{1,1} \cdots \omega_{1,n_1} \omega_{2,1} \cdots \omega_{2,n_2} \omega_{3,1} \dots \omega_{n,n_l} \circ (\omega_1 \cdots \omega_n \circ \omega   )
\end{align*}
is the expression obtained by first replacing vertex $i$ in $\omega$ by $\omega_i$ and then replacing $\omega_i$ by $\omega_{i,1} \dots \omega_{i,n_i} \circ \omega_i$, for $i=1,\ldots,l$. Hence we have associativity.\\
Equivariance follows from the observation that the resulting unlabelled elements only depend on a coupling between vertices and input elements. 
\end{proof}

We get the following combinatorial description of the operad.

\begin{proposition} \label{prop::3}
The expression
\begin{align*}
\omega_1 \cdots \omega_n \circ \omega
\end{align*}
evaluates to the sum of all Lie brackets of planar trees obtained by replacing vertex $i$ in $\omega$ by $\omega_i$. Edges outgoing from vertex $i$ becomes outgoing from any vertex of $\omega_i$, such that their left-to-right order is preserved when they are outgoing from the same vertex. Edges incoming to vertex $j$ becomes incoming to the root of $\omega_j$ if $\omega_j$ is a tree. If $\omega_j$ is a Lie bracket, we interpret the incoming edge as one edge per root and graft all trees in $\omega_j$ onto the same vertex in all possible left-to-right orders prescribed by the interpretation $[\omega_a,\omega_b]=\omega_a\omega_b-\omega_b\omega_a$.
\end{proposition}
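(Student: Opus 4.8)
The plan is to mirror the proof of Proposition \ref{prop::6}, reducing the statement to understanding how a single edge of $\omega$ is transformed under the substitution, and then separately accounting for the two features absent in the pre-Lie case: the planar (leftmost) nature of $\graft$ and the presence of the Lie bracket. Throughout I would fix one monomial expression for $\omega$ and for each $\omega_i$ in terms of the single-vertex tree, $\graft$, and $[\,\cdot\,,\cdot\,]$; since $(\widehat{Lie(\mathcal{PT})},\circ)$ is already an operad by Proposition \ref{prop::7}, the value of the substitution is independent of this choice, so it suffices to evaluate it once against the claimed combinatorial recipe.

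First I would record the combinatorial content of left grafting: for planar trees, $\omega_i \graft \omega_j$ is the sum over the vertices $v$ of $\omega_j$ of the tree in which the root of $\omega_i$ is attached to $v$ as its leftmost child; grafting a concatenation $\omega_1\graft\omega_2$ attaches the roots of the trees of $\omega_1$ to (possibly distinct) vertices of $\omega_2$ independently. Exactly as in the pre-Lie argument one then observes that there is an edge from vertex $j$ to vertex $i$ in $\omega$ if and only if, in the factorization, the generator carrying vertex $i$ is left-grafted, possibly after further operations, onto a sub-expression containing vertex $j$. Replacing vertex $i$ by the expression for $\omega_i$ and vertex $j$ by that of $\omega_j$ therefore turns this occurrence of $\graft$ into $\omega_i \graft \omega_j$, so that the root of $\omega_i$ is attached to some vertex of $\omega_j$; reading this off yields the rules that an incoming edge of a replaced vertex becomes incoming to the root of its replacement and an outgoing edge becomes outgoing from some vertex of the replacement.

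The two remaining points are where the real work lies. For the ordering, I would consider a vertex of $\omega$ with several outgoing edges to children listed left-to-right; in the factorization these arise from iterated graftings, and since each $\graft$ inserts its subtree as the current leftmost child, the left-to-right order of the children is determined by the order in which the corresponding graftings are performed. After substitution, two such edges landing on the same vertex of the replacement retain this relative order, which is the asserted order-preservation; verifying it is a short computation with the extension of $\graft$ to ordered forests. For the bracket case, if the replacement $\omega_j$ is a Lie bracket then grafting it onto the vertex it is attached to is a grafting of a bracket, resolved by the post-Lie identity $[\tau_1,\tau_2]\graft\tau_3 = \tau_1 \graft(\tau_2\graft\tau_3) - (\tau_1\graft\tau_2)\graft\tau_3 - \tau_2 \graft(\tau_1\graft\tau_3) + (\tau_2\graft\tau_1)\graft\tau_3$; using that grafting a concatenation distributes the roots independently, the terms in which the two roots land on different vertices cancel, leaving exactly the terms where all constituent roots attach to the same vertex, antisymmetrized into the two left-to-right orders given by $[\omega_a,\omega_b]=\omega_a\omega_b-\omega_b\omega_a$. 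Dually, the derivation rule $\tau \graft[\alpha,\beta]=[\tau\graft\alpha,\beta]+[\alpha,\tau\graft\beta]$ enters whenever the base of a grafting is itself a bracket, forcing the grafted root into a single constituent tree, and it is what guarantees the recipe is consistent across the non-unique factorizations of $\omega$.

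I expect the \emph{main obstacle} to be checking that these algebraic rules reproduce the stated planar combinatorics with neither spurious nor missing terms: concretely, that the leftmost convention of $\graft$, together with the derivation rule and the post-Lie identity, conspire to give precisely the order-preserving, all-orders picture. The cleanest organization is probably an induction on the total number of vertices, equivalently on the complexity of the factorization of $\omega$, peeling off one grafting or one bracket at a time and invoking the derivation rule or the post-Lie identity at the inductive step, with the single-edge analysis above serving as the base mechanism.
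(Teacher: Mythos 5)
Your proposal is correct and follows essentially the same route as the paper's proof, which simply declares the argument ``completely analogous'' to Proposition \ref{prop::6} and then points to the leftmost-grafting convention for the planar ordering and to the post-Lie identity $[\omega_1,\omega_2]\graft\omega_3=\omega_1\graft(\omega_2\graft\omega_3)-(\omega_1\graft\omega_2)\graft\omega_3-\omega_2\graft(\omega_1\graft\omega_3)+(\omega_2\graft\omega_1)\graft\omega_3$ for the bracket case. Your elaboration of the cancellation of terms where the roots of a bracket land on different vertices, leaving the antisymmetrized same-vertex terms, is exactly the mechanism the paper leaves implicit.
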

\begin{proof}
This is completely analogous to the statement and proof of Proposition \ref{prop::6}. Note that there is now also a factor of planarity and that all grafting is done leftmost, note also that the identity
\begin{align*}
[\omega_1,\omega_2] \graft \omega_3 = \omega_1 \graft (\omega_2 \graft \omega_3) - (\omega_1 \graft \omega_2) \graft \omega_3 - \omega_2 \graft (\omega_1 \graft \omega_3) + (\omega_2 \graft \omega_1) \graft \omega_3
\end{align*}
is used to deal with replacing a non-root vertex by a Lie bracket.
\end{proof}

\begin{corollary}
The operad $(\hat{Lie(PT)},\circ)$ is the post-Lie operad.
\end{corollary}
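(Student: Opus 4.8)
The plan is to recognise $(\widehat{Lie(\mathcal{PT})},\circ)$ as the post-Lie operad through the standard reconstruction of an operad from its free algebras: for any operad $\mathcal{P}$, the space $\mathcal{P}(n)$ is canonically the multilinear component of the free $\mathcal{P}$-algebra on $n$ generators $x_1,\dots,x_n$, i.e.\ the part in which each $x_i$ occurs exactly once, with $\Sigma_n$ acting by permuting the generators and operadic composition given by substituting words on disjoint sets of generators. I would therefore exhibit, for each $n$, a $\Sigma_n$-module isomorphism $\widehat{Lie(\mathcal{PT})}(n)\cong \mathbf{PostLie}(n)$ and check that $\circ$ is the corresponding substitution, exactly in parallel with Proposition \ref{prop::6} for the pre-Lie case.

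For the identification of the underlying spaces I would use the fact, recalled in Section \ref{ssec:TreesForests}, that $Lie(\mathcal{PT})$ is the free post-Lie algebra on the single generator $\bullet$. Decorating this construction by $n$ colours presents the free post-Lie algebra on $x_1,\dots,x_n$ as the span of Lie brackets of planar rooted trees whose vertices carry the decorations $x_i$. Its multilinear component is spanned by those elements with exactly $n$ vertices decorated bijectively by $\{x_1,\dots,x_n\}$; this is precisely $Lie(\mathcal{PT})_n=\widehat{Lie(\mathcal{PT})}(n)$, a bijective labelling of the vertices being the same datum as a multilinear decoration. Moreover the $\Sigma_n$-action on $Lie(\mathcal{PT})_n$ by relabelling vertices is exactly the permutation of generators, so the identification is $\Sigma_n$-equivariant and gives $\widehat{Lie(\mathcal{PT})}(n)\cong\mathbf{PostLie}(n)$.

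It then remains to match the compositions. By construction $\omega_1\cdots\omega_n\circ\omega$ substitutes the expression of $\omega_i$ for the generator sitting at vertex $i$ of $\omega$, which is verbatim the operadic composition of $\mathbf{PostLie}$; Proposition \ref{prop::3} records the resulting normal form, and Proposition \ref{prop::7} has already shown $\circ$ satisfies the operad axioms. Equivalently, I would build the morphism in the opposite direction: since $\mathbf{PostLie}$ is generated as an operad by its arity-two part, a morphism $\mathbf{PostLie}\to\widehat{Lie(\mathcal{PT})}$ is determined by sending the two binary generators to $\bullet_1\graft\bullet_2$ and $[\bullet_1,\bullet_2]$ in $Lie(\mathcal{PT})_2$; this respects the defining post-Lie relations because $Lie(\mathcal{PT})$ is the free post-Lie algebra, it is surjective because every labelled element of $Lie(\mathcal{PT})_n$ is obtained from $\bullet$ by iterated grafting and bracketing, and it is injective by the universal property of the free post-Lie algebra. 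The step I expect to be the main obstacle is precisely the bookkeeping when a non-root vertex is replaced by a genuine Lie bracket: here the substitution must be re-expanded using the post-Lie identity $[\omega_1,\omega_2]\graft\omega_3=\omega_1\graft(\omega_2\graft\omega_3)-(\omega_1\graft\omega_2)\graft\omega_3-\omega_2\graft(\omega_1\graft\omega_3)+(\omega_2\graft\omega_1)\graft\omega_3$, and one must confirm that the normal form produced by $\circ$ in Proposition \ref{prop::3} agrees with the reduction of the substituted $\mathbf{PostLie}$-word. Since both reductions invoke exactly the same post-Lie relations, this comparison is forced, and the morphism is an isomorphism of operads.
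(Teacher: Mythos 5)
Your argument is correct, but it takes a genuinely different route from the paper. The paper's proof is a one-line appeal to the literature: Proposition \ref{prop::3} exhibits the composition $\circ$ combinatorially (vertex replacement, with incoming edges distributed over the roots of a bracket and outgoing edges redistributed preserving left-to-right order), and the paper simply observes that this is verbatim the known combinatorial presentation of the post-Lie operad in the cited reference of Silva. You instead reconstruct $\mathbf{PostLie}(n)$ intrinsically as the multilinear component of the free post-Lie algebra on $n$ generators, identify that component with $Lie(\mathcal{PT})_n$ via the freeness of $(Lie(\mathcal{PT}),\graft,[\cdot,\cdot])$ decorated by $n$ colours, and match $\circ$ with substitution of generators. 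Your approach buys two things the paper's does not: it is self-contained (no reliance on an external combinatorial model of $\mathbf{PostLie}$), and it settles for free the well-definedness of $\circ$ --- that the result of ``replacing vertex $i$ by the expression for $\omega_i$'' is independent of the chosen monomial expression --- since substitution of generators is exactly the post-Lie morphism $\bullet_i\mapsto\omega_i$ guaranteed by the universal property; the paper never explicitly addresses this point for $\widehat{Lie(\mathcal{PT})}$ (only later for the module structure on $\widehat{\mathcal{OF}}$). The price is that you must invoke, or prove, the description of the free post-Lie algebra on $n$ generators as Lie brackets of decorated planar rooted trees, whereas the paper only recalls the one-generator case; and in your second, generators-and-relations variant, the injectivity step ultimately reduces to the same multilinear-component identification, so the first variant is the one doing the real work. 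Both proofs are sound; yours is longer but more transparent about where freeness is used.
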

\begin{proof}
In proposition \ref{prop::3}, we recovered the combinatorial description of the post-Lie operad from \cite{Silva2018}.
\end{proof}

Motivated by Proposition \ref{prop::1}, we are now going to define a coproduct as the dual of this substitution operad.

\begin{definition}
Let $S(Lie(\mathcal{PT}))$ denote the symmetric algebra over the vector space $Lie(\mathcal{PT})$, meaning that it is the vector space of all unordered sequences of Lie polynomials of ordered forests together with a commutative concatenation product. We denote the commutative concatenation of $\omega_1$ and $\omega_2$ by $\omega_1\centerdot\omega_2$. We furthermore denote the identity element of this product by $1$. \\
Let $\omega \in \widehat{Lie(\mathcal{PT})}$ be a Lie polynomial. We define the coproduct $\Delta_{\mathcal{Q}}: S(Lie(\mathcal{PT})) \to S(Lie(\mathcal{PT})) \otimes S(Lie(\mathcal{PT}))$ by
\begin{align*}
\Delta_{\mathcal{Q}}([\omega])=\sum_{[\omega] \text{ is a summand in } \atop [\omega_1 \cdots \omega_n \circ \omega']}\frac{1}{n!} [\omega_1]\centerdot \cdots \centerdot[\omega_n] \otimes [\omega'],
\end{align*}
for $\omega_1,\dots,\omega_n,\omega'$ labelled Lie monomials. The coproduct is then extended to $S(Lie(\mathcal{PT}))$ multiplicatively.
\end{definition}

We conclude the section with technical details.

\begin{proposition}
Let the linear map $\epsilon: S(Lie(\mathcal{PT})) \to \mathbb{K}$ be defined by
\begin{align*}
\epsilon(\omega)=\begin{cases*}
1  \text{ if } \omega=1 \text{ or } \omega=\bullet, \\
0  \text{ otherwise}
\end{cases*}
\end{align*}
for $\omega$ a Lie monomial, and extended to $S(Lie(\mathcal{PT}))$ by
\begin{align*}
\epsilon(\omega_1\centerdot \omega_2)=\epsilon(\omega_1)\epsilon(\omega_2).
\end{align*}
Then $(S(Lie(\mathcal{PT})),\Delta_{\mathcal{Q}},\epsilon)$ is a coalgebra.
\end{proposition}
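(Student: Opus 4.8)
The plan is to deduce both coalgebra axioms from the operad structure established in Proposition \ref{prop::7}, treating $\Delta_{\mathcal{Q}}$ as the symmetrized dual of the operadic composition $\circ$, exactly as in Foissy's construction recalled in Section \ref{ssec:operads} and as carried out for the non-planar case in Proposition \ref{prop::1}. Concretely, I would fix the canonical pairing $\langle \cdot, \cdot \rangle$ for which the labelled Lie monomials form an orthonormal basis of $\widehat{Lie(\mathcal{PT})}$, extend it to $S(Lie(\mathcal{PT}))$ so that $[\omega_1]\centerdot \cdots \centerdot [\omega_n]$ pairs with products of labelled monomials after averaging over orbit representatives, and then observe that the defining formula for $\Delta_{\mathcal{Q}}$ is precisely the duality statement
\begin{align*}
\langle \omega_1 \cdots \omega_n \circ \omega', [\omega]\rangle = \langle [\omega_1]\centerdot \cdots \centerdot [\omega_n] \otimes [\omega'], \Delta_{\mathcal{Q}}([\omega])\rangle,
\end{align*}
with the factor $\tfrac{1}{n!}$ absorbing the $n!$ labellings of the quotient $\omega'$, just as in the proof of Proposition \ref{prop::1}.

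Granting this duality, coassociativity of $\Delta_{\mathcal{Q}}$ becomes dual to the associativity relation for $\circ$ proved in Proposition \ref{prop::7}. I would make this precise by pairing both $(\Delta_{\mathcal{Q}} \otimes Id)\Delta_{\mathcal{Q}}([\omega])$ and $(Id \otimes \Delta_{\mathcal{Q}})\Delta_{\mathcal{Q}}([\omega])$ against an arbitrary tensor of labelled monomials, and checking that each side computes the coefficient of $[\omega]$ in one of the two doubly-iterated compositions appearing in the operad associativity axiom; since those two compositions agree by Proposition \ref{prop::7}, the two iterated coproducts coincide. The main obstacle here is purely combinatorial bookkeeping: one must verify that the symmetrization weights $\tfrac{1}{n!}$ introduced at each application of $\Delta_{\mathcal{Q}}$ combine correctly across the two nested sums, so that the bijection between labelled decompositions on the two sides is genuinely weight-preserving. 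This is the same symmetrization subtlety flagged in the remark after Proposition \ref{prop::1}, and I expect it to be the only delicate point; once the pairing and its normalization are fixed, coassociativity follows formally from Proposition \ref{prop::7}.

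For the counit axioms I would argue directly, and it suffices to check them on a single Lie monomial $[\omega]$ since $\epsilon$ and $\Delta_{\mathcal{Q}}$ are both multiplicative. As $\epsilon$ vanishes on every Lie monomial other than $1$ and $\bullet$, and each factor $[\omega_i]$ occurring in $\Delta_{\mathcal{Q}}([\omega])$ comes from the operad and hence has at least one vertex, applying $\epsilon \otimes Id$ kills every term except the one in which all $[\omega_i]=\bullet$. Replacing every vertex of $\omega'$ by the single-vertex tree is the identity operation of the operad, so this surviving term forces $[\omega']=[\omega]$ and, by the labelling count of Proposition \ref{prop::1}, contributes with coefficient $1$; hence $(\epsilon \otimes Id)\Delta_{\mathcal{Q}}=Id$. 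Symmetrically, $(Id \otimes \epsilon)\Delta_{\mathcal{Q}}$ retains only the term with $[\omega']=\bullet$, which forces $n=1$ and $[\omega_1]=[\omega]$ with weight $\tfrac{1}{1!}=1$, giving $(Id \otimes \epsilon)\Delta_{\mathcal{Q}}=Id$. Together with coassociativity, this establishes the coalgebra axioms.
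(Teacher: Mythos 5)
Your proposal is correct and follows essentially the same route as the paper's proof: coassociativity is obtained by duality from the associativity of the operad $(\widehat{Lie(\mathcal{PT})},\circ)$ established in Proposition \ref{prop::7}, and the counit axioms are verified directly by observing that $\epsilon$ kills every term except the one where all inserted monomials (respectively the quotient) reduce to single vertices. The only divergence is one of emphasis: you explicitly flag the $\tfrac{1}{n!}$ symmetrization bookkeeping in the coassociativity step as the delicate point, whereas the paper passes over it with ``follows immediately,'' so nothing in your argument is missing relative to the paper's own level of detail.
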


\begin{proof}
The coassociativity of $\Delta_{\mathcal{Q}}$ follows immediately from the associativity of $(\widehat{Lie(\mathcal{PT})},\circ)$. It remains to show that $\epsilon$ is a counit, meaning that it satisfies
\begin{align*}
(Id \otimes \epsilon)\Delta_{\mathcal{Q}}=Id=(\epsilon \otimes Id)\Delta_{\mathcal{Q}}.
\end{align*}
We have that $(Id \otimes \epsilon)\Delta_{\mathcal{Q}}(\omega_1 \centerdot \cdots \centerdot \omega_n)$ is nonzero only on the unique term where every $\omega_i$, $i=1,\dots,n$, is contracted into a single vertex. Hence this has to be the identity. Similarly, we have that $(\epsilon \otimes Id)\Delta_{\mathcal{Q}}(\omega)$ is non-zero only on the unique term where every contracted subforest of $\omega$ consists of a single vertex. Hence this expression evaluates to $\omega$.
\end{proof}

\begin{proposition} 
\label{prop::2}
Define the function $| \cdot | : Lie(\mathcal{PT}) \to \mathbb{N}$ by $| \omega | = \#\{\text{vertices in }\omega \}-1$. Then
\begin{align*}
|[\omega_1 \cdots \omega_n \circ \omega   ]|=|[\omega_1]| + \cdots +|[\omega_n]|+|[\omega]|.
\end{align*}
\end{proposition}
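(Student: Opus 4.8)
The plan is to prove the identity by analyzing the vertex count directly from the combinatorial description of the operad composition given in Proposition \ref{prop::3}. The key observation is that the function $|\cdot|$ counts vertices and then subtracts one, so I must track how the total vertex count behaves under the substitution operation and then account for the shift by $-1$.

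First I would note that the operad composition $\omega_1 \cdots \omega_n \circ \omega$ is, by the construction preceding Proposition \ref{prop::3}, defined for $\omega \in Lie(\mathcal{PT})_n$, that is, $\omega$ has exactly $n$ vertices, labelled $1,\dots,n$. The operation replaces each vertex $i$ of $\omega$ by the element $\omega_i$. Crucially, every summand appearing in $\omega_1 \cdots \omega_n \circ \omega$ has the same number of vertices, since each summand is obtained by the same replacement procedure (grafting edges in various ways does not create or destroy vertices). Therefore the total number of vertices in any summand $[\omega_1 \cdots \omega_n \circ \omega]$ equals the sum $\sum_{i=1}^{n} \#\{\text{vertices in } \omega_i\}$, because vertex $i$ contributes exactly the vertices of $\omega_i$ and nothing else.

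Next I would translate this raw vertex count into the shifted function $|\cdot|$. Writing $V(\eta)$ for the number of vertices of an element $\eta$, the established equality reads $V(\omega_1 \cdots \omega_n \circ \omega) = \sum_{i=1}^{n} V(\omega_i)$. Since $\omega$ has exactly $n$ vertices we have $V(\omega) = n$, hence $|[\omega]| = n - 1$. Now I compute
\begin{align*}
|[\omega_1 \cdots \omega_n \circ \omega]| &= V(\omega_1 \cdots \omega_n \circ \omega) - 1 = \sum_{i=1}^{n} V(\omega_i) - 1 \\
&= \sum_{i=1}^{n} \bigl(|[\omega_i]| + 1\bigr) - 1 = \sum_{i=1}^{n} |[\omega_i]| + n - 1 = \sum_{i=1}^{n} |[\omega_i]| + |[\omega]|,
\end{align*}
which is exactly the claimed identity.

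The only subtlety, and the point I would emphasize as the single thing requiring care rather than a genuine obstacle, is the bookkeeping of the $-1$ shifts: there are $n$ inputs each contributing a $+1$, the output contributes a single $-1$, and these combine to precisely reconstruct the term $n-1 = |[\omega]|$. I would make sure to state explicitly that the identity $V(\omega) = n$ is built into the indexing of $Lie(\mathcal{PT})_n$, so that no hidden assumption about the arity is needed. Everything else follows from the elementary fact that the substitution procedure of Proposition \ref{prop::3} is vertex-preserving in the sense that each replaced vertex contributes exactly the vertices of its substituted element.
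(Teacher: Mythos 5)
Your proof is correct and follows essentially the same argument as the paper: count the vertices of the composite as the sum of the vertices of the $\omega_i$, use $|[\omega]| = n-1$ coming from $\omega \in Lie(\mathcal{PT})_n$, and do the bookkeeping of the $\pm 1$ shifts. Your write-up is if anything slightly more explicit than the paper's in noting that every summand of the composition has the same vertex count.
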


\begin{proof}
Since every vertex in $[\omega_1 \cdots \omega_n \circ \omega]$ comes from a vertex in an $\omega_i$, we have that
\begin{align*}
|[\omega_1 \cdots \omega_n \circ \omega ]|+1=&(|[\omega_1]|+1) + \cdots +(|[\omega_n]|+1) \\
=&(|[\omega_1]| + \cdots +|[\omega_n]|+|[\omega]|)+n.
\end{align*}
Hence
\begin{align*}
|[\omega_1 \cdots \omega_n \circ \omega ]|=&|[\omega_1]| + \cdots +|[\omega_n]|+(n-1) \\
=&|[\omega_1]| + \cdots +|[\omega_n]|+|[\omega]|.
\end{align*}
\end{proof}

\begin{corollary}
The function $| \cdot |$ defined in Proposition \ref{prop::2} induces a grading on the bialgebra $\mathcal{Q}=(S(Lie(\mathcal{PT})),\centerdot,\Delta_{\mathcal{Q}},1,\epsilon)$.
\end{corollary}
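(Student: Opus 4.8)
The plan is to promote the vertex-counting degree from $\widehat{Lie(\mathcal{PT})}$ to the whole symmetric algebra and then check that both structure maps respect it, the coproduct check being an immediate consequence of Proposition \ref{prop::2}. First I would extend $|\cdot|$ to $S(Lie(\mathcal{PT}))$ additively by setting
\begin{align*}
|[\omega_1]\centerdot \cdots \centerdot [\omega_k]| = |[\omega_1]| + \cdots + |[\omega_k]|,
\end{align*}
with the empty product $1$ assigned degree $0$. Since $|[\omega]|$ depends only on the number of vertices of $\omega$, which is invariant under the $\Sigma_n$-action permuting labels, the degree is well defined on equivalence classes and hence on all of $S(Lie(\mathcal{PT}))$. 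Writing $\mathcal{Q}_n$ for the linear span of those products of Lie monomials of total degree $n$, one obtains a direct sum decomposition $\mathcal{Q}=\bigoplus_{n\geq 0}\mathcal{Q}_n$, because the products of a basis of $Lie(\mathcal{PT})$ form a basis of $S(Lie(\mathcal{PT}))$ and each such basis element is homogeneous.

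Compatibility with the product is immediate from the definition: concatenating a product of degree $m$ with one of degree $n$ yields a product of degree $m+n$, so $\mathcal{Q}_m \centerdot \mathcal{Q}_n \subseteq \mathcal{Q}_{m+n}$ and the unit sits in $\mathcal{Q}_0$. For the coproduct I would first treat a single Lie monomial $[\omega]$. Every term of $\Delta_{\mathcal{Q}}([\omega])$ has the form $\tfrac{1}{n!}[\omega_1]\centerdot \cdots \centerdot [\omega_n] \otimes [\omega']$ with $[\omega]$ a summand of $[\omega_1\cdots \omega_n \circ \omega']$; by Proposition \ref{prop::2} this forces
\begin{align*}
|[\omega]| = |[\omega_1]| + \cdots + |[\omega_n]| + |[\omega']|.
\end{align*}
The left tensor factor has degree $|[\omega_1]| + \cdots + |[\omega_n]|$ and the right factor has degree $|[\omega']|$, so each term lies in $\mathcal{Q}_p \otimes \mathcal{Q}_q$ with $p+q=|[\omega]|$; that is, $\Delta_{\mathcal{Q}}([\omega]) \in \bigoplus_{p+q=|[\omega]|}\mathcal{Q}_p \otimes \mathcal{Q}_q$.

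It remains to pass from generators to the whole algebra. Since $\Delta_{\mathcal{Q}}$ is defined to be multiplicative and the product is graded in each tensor slot, gradedness of $\Delta_{\mathcal{Q}}$ on Lie monomials propagates to every product: if $\alpha,\beta$ are homogeneous of degrees $a,b$, then $\Delta_{\mathcal{Q}}(\alpha\centerdot \beta)=\Delta_{\mathcal{Q}}(\alpha)\centerdot \Delta_{\mathcal{Q}}(\beta)$ lands in $\bigoplus_{p+q=a+b}\mathcal{Q}_p\otimes \mathcal{Q}_q$. Together with the counit being supported on $\mathcal{Q}_0$ (it vanishes on every Lie monomial with more than one vertex, and $\bullet$ has degree $0$), this exhibits $\mathcal{Q}$ as a graded bialgebra. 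The proof carries essentially no genuine obstacle: all the combinatorial content is already packaged in the additivity statement of Proposition \ref{prop::2}, and the only point deserving care is the routine verification that multiplicativity of $\Delta_{\mathcal{Q}}$ preserves homogeneity, which follows from the gradedness of $\centerdot$.
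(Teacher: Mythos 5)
Your proof is correct and follows essentially the same route as the paper: extend $|\cdot|$ additively to $S(Lie(\mathcal{PT}))$ and invoke Proposition \ref{prop::2} to see that every term of $\Delta_{\mathcal{Q}}$ respects the degree, with multiplicativity handling general elements. You simply spell out the routine verifications (well-definedness, gradedness of the product and counit) that the paper leaves implicit.
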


\begin{proof}
Let $\omega \in Lie(\mathcal{PT})$ and consider the term $\omega_1 \centerdot \dots \centerdot \omega_n \otimes \omega'$ in $\Delta_{\mathcal{Q}}(\omega)$. By Proposition \ref{prop::2}, we have that $|\omega_1| + \dots +|\omega_n|+|\omega'|=|\omega|$. Extend $|\cdot |$ to $S(Lie(\mathcal{PT}))$ by $|\omega_1 \centerdot \dots \centerdot \omega_n|=|\omega_1| + \dots +|\omega_n|$, then $|\omega|=|\omega_{(1)}|+|\omega_{(2)}|$ and $S(Lie(\mathcal{PT}))=\oplus_{n=1}^{\infty} \{\omega \in S(Lie(\mathcal{PT})): |\omega|=n  \}$ defines a grading.
\end{proof}

\section{Two cointeracting bialgebras}
\label{section::3}

In this section we are going to describe a cointeraction between the Munthe-Kaas--Wright Hopf algebra $\mathcal{H}_N=(\mathcal{OF},\shuffle,\Delta_N,\emptyset,\delta(\emptyset))$, and the bialgebra $\mathcal{Q}$ defined in the previous section. This means that we will define a coaction $\rho: \mathcal{H}_N \to \mathcal{Q} \otimes \mathcal{H}_N$ that satisfies the conditions of Definition \ref{def::cointeraction}. Informally, we want this coaction to be the opposite of substituting Lie polynomials into the vertices of forests.

\begin{remark}
	There exists an isomorphism \cite{FloystadMunthe-Kaas} $(S(Lie(\mathcal{PT})),\centerdot) \cong (\mathcal{OF},\shuffle)$. This means that our coproduct $\Delta_{\mathcal{Q}} : \mathcal{Q} \to \mathcal{Q} \otimes \mathcal{Q}$ can already be seen as a coaction $\mathcal{H}_N \to \mathcal{Q} \otimes \mathcal{H}_N$ via identification by this isomorphism. The isomorphism is however non-trivial. We will prefer to describe the coaction by using a module over an operad.\\ 
	Note that, in the non-planar case, we had an isomorphism between $\mathcal{H}_{CK}$ and $\mathcal{H}$ as algebras. This is now generalized also to the planar case.
\end{remark}

Recall that $\mathcal{OF}$ is the universal enveloping algebra of $Lie(\mathcal{PT})$, hence there is an inclusion $Lie(\mathcal{PT}) \subset \mathcal{OF}$ given by the commutator $[\tau_1,\tau_2]=\tau_1\tau_2 - \tau_2\tau_1$. We will use this inclusion to define what it means to substitute elements of $Lie(\mathcal{PT})$ into vertices of $\mathcal{OF}$. \\

Let $\mathcal{OF}_n$ denote the vector space spanned by all ordered forests with exactly $n$ vertices, together with a bijection between the set $\{1,\ldots,n \}$ and the vertices of the forest. This bijection provides a labelling of the vertices. Let the symmetric group $\Sigma_n$ act on $\mathcal{OF}_n$ by permuting the labels of the vertices. Write $[\omega]$ for the orbit of $\omega$ under $\Sigma_n$, we consider this as an unlabelled forest. Define the equivalence relation $\omega_1 \sim \omega_2 \iff [\omega_1]=[\omega_2]$ on $\mathcal{OF}_n$. Let $\widehat{\mathcal{OF}}=\sum_{n=1}^{\infty}\mathcal{OF}_n$, then $\widehat{\mathcal{OF}}/\sim$ can be identified with $\mathcal{OF}$. This identification is the bosonic Fock functor \cite{aguiar2010monoidal}. \\

We now define composition maps $\circ :\oplus_{n\geq 1} \widehat{Lie(\mathcal{PT})}^{\otimes n} \otimes \mathcal{OF}_n \to \widehat{\mathcal{OF}}$. Let $\omega_1 \cdots \omega_n \in \widehat{Lie(\mathcal{PT})}$ and $\omega' \in \mathcal{OF}_n$, then $\omega'$ can be expressed in terms of the single-vertex tree, associative concatenation and non-associative $D$-algebra product $\graft$. Furthermore, all of $\omega_1 \cdots \omega_n$ can be expressed in the same way via the inclusion $\widehat{Lie(\mathcal{PT})} \subset \widehat{\mathcal{OF}}$. By the composition
\begin{align*}
\omega_1 \cdots \omega_n \circ \omega'
\end{align*}
we will mean the expression obtained by replacing each occurence of vertex number $i$ in the expression of $\omega'$ by the expression for $\omega_i$, $i=1,\dots,n$.

\begin{proposition}
The above composition turns $\widehat{\mathcal{OF}}$ into a (right) module over $\widehat{Lie(\mathcal{PT})}$.
\end{proposition}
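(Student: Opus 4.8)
The plan is to verify the two module axioms from Section~\ref{ssec:operads} --- associativity and equivariance --- together with the well-definedness of the composition, by transporting the arguments of Propositions~\ref{prop::7} and~\ref{prop::3} to this setting; the one genuinely new feature is that the outer object $\omega'$ is now a general ordered forest, so its factorization uses the associative concatenation $\cdot$ in addition to $\graft$.

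\textbf{Well-definedness} is the step I expect to be the main obstacle. The composition $\omega_1 \cdots \omega_n \circ \omega'$ is defined through a choice of expression for $\omega' \in \mathcal{OF}_n$ in terms of $\bullet$, $\cdot$ and $\graft$, and for each $\omega_i$ in terms of $\bullet$, the Lie bracket and $\graft$; since such expressions are highly non-unique (the $D$-algebra axioms of Definition~\ref{def:Dalg}, the relation $[\tau_1,\tau_2]=\tau_1\tau_2-\tau_2\tau_1$, and the post-Lie identities all produce distinct factorizations of the same element), I must show the output does not depend on these choices. My approach is to use freeness: by \cite{Munthe-KaasWright2008}, $(\mathcal{OF},\cdot,\graft)$ is the free $D$-algebra on $\bullet$ and $\mathcal{D}(\mathcal{OF})=Lie(\mathcal{PT})$. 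Viewing a labelled $\omega'\in\mathcal{OF}_n$ as an element of the free $D$-algebra on the generators $\bullet_1,\dots,\bullet_n$ (one per vertex), I observe that the substitution is exactly the image of $\omega'$ under the unique $D$-algebra morphism determined by $\bullet_i\mapsto\omega_i$, which exists precisely because each $\omega_i\in\mathcal{D}(\mathcal{OF})$. Independence of the factorization is then uniqueness of this morphism, and linearity in each slot is automatic. Equivalently, one records the combinatorial description verbatim from Proposition~\ref{prop::3} --- vertices replaced by the $\omega_i$, outgoing edges re-attached at arbitrary vertices respecting the left-to-right planar order, and Lie brackets expanded by $[\omega_a,\omega_b]=\omega_a\omega_b-\omega_b\omega_a$ --- which depends only on the combinatorial data and is thus manifestly well-defined; additivity of the vertex count (the argument of Proposition~\ref{prop::2}) shows the output indeed lies in $\widehat{\mathcal{OF}}$.

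\textbf{Associativity.} With $\omega\in\mathcal{OF}_m$ and operad elements $\omega_i,\omega_{i,j}\in\widehat{Lie(\mathcal{PT})}$, I would show, exactly as in Proposition~\ref{prop::7}, that the two composites
\begin{align*}
(\omega_{1,1} \cdots \omega_{1,n_1} \circ \omega_1) \cdots (\omega_{m,1} \cdots \omega_{m,n_m} \circ \omega_m) \circ \omega
\end{align*}
and
\begin{align*}
\omega_{1,1} \cdots \omega_{1,n_1} \cdots \omega_{m,1} \cdots \omega_{m,n_m} \circ (\omega_1 \cdots \omega_m \circ \omega)
\end{align*}
both amount to replacing vertex $i$ of $\omega$ by $\omega_{i,1} \cdots \omega_{i,n_i} \circ \omega_i$. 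From the morphism viewpoint this is the associativity of composition of $D$-algebra morphisms, the inner compositions $\omega_{i,1} \cdots \omega_{i,n_i} \circ \omega_i$ being the operad composition already shown associative in Proposition~\ref{prop::7}.

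\textbf{Equivariance and identity.} As in Proposition~\ref{prop::7}, the unlabelled output depends only on the coupling between the vertices of $\omega$ and the inputs $\omega_i$, so a global relabelling of the vertices of $\omega$ by $\sigma\in\Sigma_m$ (inducing the block permutation on the output) and an internal relabelling of each $\omega_i$ by $\sigma_i$ both commute with substitution, giving the two equivariance identities; substituting a single vertex into every vertex returns $\omega'$, so $\bullet$ acts as the identity. The only care needed throughout is the bookkeeping of planarity and of labels under the bosonic Fock functor, which does not interfere with the substitution combinatorics.
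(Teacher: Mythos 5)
Your proposal is correct and matches the paper's proof in substance: both identify well-definedness as the only real issue and resolve it by the same observation, namely that the rewriting identities relating different factorizations of $\omega'$ require a derivation in the first slot, and the substitution sends Lie polynomials to Lie polynomials (your phrasing via the universal property of the free $D$-algebra on $\bullet_1,\dots,\bullet_n$ is just a repackaging of the paper's direct check that $\circ$ commutes with rewriting). Associativity and equivariance are deferred to the argument of Proposition~\ref{prop::7} in both cases, so no further comparison is needed.
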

\begin{proof}
We first note that the composition is well-defined, since expressing a forest $\omega'$ in terms of associative concatenation, the non-associative product $\graft$ and the single-vertex tree is unique up to rewritings by the identities
\begin{align*}
\tau \graft (\omega_1\omega_2)=&(\tau\graft \omega_1)\omega_2 + \omega_1(\tau \graft \omega_2), \\
\tau \graft (\omega_1 \graft \omega_2)=&(\tau \omega_1)\graft \omega_2 + (\tau \graft \omega_1) \graft \omega_2,
\end{align*}
for $\tau$ a derivation and $\omega_1,\omega_2$ arbitrary. If $\tau$ is a derivation, it is a Lie polynomial. Since Lie polynomials get mapped to Lie polynomials, the above identities can be applied post-substitution. Hence $\circ$ commutes with rewriting and must be well-defined. \\
Proving associativity and equivariance can be done in the same way as in Proposition \ref{prop::7}.
\end{proof}

\begin{remark}
Note that if we attempt to turn $\widehat{\mathcal{OF}}$ into an operad by replacing the single-vertex tree by an arbitrary forest, we are no longer mapping Lie polynomials to Lie polynomials. This means that computing $\omega_1 \cdots \omega_n \circ \omega'$ can give different results depending on how you choose to express $\omega'$.
\end{remark}

We get the following combinatorial picture.

\begin{proposition}
The expression
\begin{align*}
\omega_1 \cdots \omega_n \circ \omega
\end{align*}
evaluates to the sum of all possible forests obtained by replacing vertex $i$ in $\omega$ by the forest $\omega_i$, for all $i=1,\dots,n$. The incoming edge to vertex $i$ becomes one incoming edge to each of the roots of $\omega_i$. The edges outgoing from vertex $i$ become outgoing from any vertex of $\omega_i$. The left to right ordering of edges outgoing from the same vertex $i$ is preserved whenever the edges end up on the same vertex in $\omega_i$.
\end{proposition}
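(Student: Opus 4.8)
The plan is to follow the proof of Proposition~\ref{prop::3} (which in turn follows Proposition~\ref{prop::6}), the only genuinely new feature being that the target is now the space of ordered forests $\widehat{\mathcal{OF}}$ and that the relevant products are the $D$-algebra operations rather than post-Lie grafting alone. First I would reduce to a single, fixed expression of $\omega$: the preceding proposition already shows that $\circ$ is well defined, because any two expressions of $\omega$ in terms of the single-vertex tree, concatenation and left grafting differ by the two rewriting identities, and those identities are preserved by substitution since every $\omega_i$ is a Lie polynomial and Lie polynomials map to Lie polynomials. Hence it suffices to take one factorization of $\omega$, perform the substitution, and read off the resulting edges.

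The core of the argument is the same edge-tracking observation as in Proposition~\ref{prop::6}: there is an edge from vertex $j$ to vertex $i$ in $\omega$ exactly when, in the chosen factorization, the subtree rooted at $i$ is left-grafted onto a substructure containing $j$. Substituting $\omega_j$ for $j$ and $\omega_i$ for $i$ turns this grafting into $\omega_i \graft \omega_j$ computed in $\widehat{\mathcal{OF}}$. Two features of the $D$-algebra then yield the two edge clauses. Since $\omega_i$ is a derivation, the rule $\tau \graft (\omega_1\omega_2) = (\tau \graft \omega_1)\omega_2 + \omega_1(\tau \graft \omega_2)$ makes the grafting of $\omega_i$ distribute over the vertices of $\omega_j$, so the attachment point ranges over every vertex of $\omega_j$; this is the clause that edges outgoing from $j$ become outgoing from any vertex of $\omega_j$. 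On the other hand, evaluating $B^+(\omega_i) = \omega_i \graft \bullet$ shows that grafting $\omega_i$ at a single location attaches all of its trees below that location, i.e.\ sends one incoming edge to each root of $\omega_i$, which is the clause about incoming edges.

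It then remains to track planarity, which follows from the fact that all grafting is leftmost and that concatenation records a left-to-right order: the relative order of two edges outgoing from the same vertex $i$ that land on the same vertex of $\omega_i$ is fixed by the order of the corresponding graftings in the factorization of $\omega$, and substitution does not disturb it. The step I expect to be the main obstacle is the case in which some $\omega_i$ is a proper Lie bracket rather than a single tree. There one has to expand it through $[\omega_a,\omega_b]=\omega_a\omega_b - \omega_b\omega_a$ and verify, using the forest-grafting recursion $\tau_1\omega_1 \graft \omega_2 = \tau_1 \graft (\omega_1 \graft \omega_2) - (\tau_1 \graft \omega_1)\graft \omega_2$ together with the $D$-algebra relation, that the resulting signs and orderings recombine into exactly the prescribed sum of forests. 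This is the planar, $D$-algebraic analogue of the ``replacing a non-root vertex by a Lie bracket'' computation already invoked in the proof of Proposition~\ref{prop::3}.
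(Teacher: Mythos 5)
Your proposal is correct and follows essentially the same route as the paper: the paper's proof likewise reduces everything to the edge-tracking observation (an edge from vertex $j$ to vertex $i$ in $\omega$ corresponds to vertex $i$ being $\graft$-grafted onto a substructure containing $j$, so substitution produces $\omega_i \graft \omega_j$) combined with the combinatorial description of the $D$-algebra grafting of a Lie polynomial onto a forest. The paper simply recalls that description of $\graft$ in one sentence, whereas you rederive it from the derivation and $D$-algebra identities and explicitly flag well-definedness and the Lie-bracket case; this is additional detail, not a different argument.
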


\begin{proof}
Recall the definition of the non-associative product in the free $D$-algebra, $\omega_a \graft \omega_b$ is the sum of all ways to add one edge incoming to each root of $\omega_a$ that are outgoing in the leftmost position from any vertex of $\omega_b$ and such that if two trees from $\omega_a$ are grafted on the same vertex of $\omega_b$, then their pairwise order in the planar embedding is preserved. Furthermore note that there is an edge from vertex $j$ to vertex $i$ in $\omega$, if and only if vertex $i$ is $\graft$-grafted onto a subtree of $\omega$ that contains the vertex $j$. Hence when vertex $i$ is replaced by $\omega_i$ and vertex $j$ is replaced by $\omega_j$, we get $\omega_i$ grafted onto $\omega_j$ by the product $\graft$.
\end{proof}

We are now ready to define the coaction. Let the coaction $\rho: \mathcal{H}_N \to \mathcal{Q} \otimes \mathcal{H}_N$ be defined by
\begin{align*}
\rho(\emptyset)=&1 \otimes \emptyset, \\
\rho([\omega])=&\sum_{[\omega] \text{ is a summand} \atop \text{in } [\omega_1 \cdots \omega_n \circ \omega']}\frac{1}{n!} [\omega_1]\centerdot \cdots \centerdot[\omega_n] \otimes [\omega'],
\end{align*}
for $\omega_1,\dots,\omega_n$ labeled Lie monomials and $\omega'$ a labeled forest. We now want to give a few example computations of this coaction:

\begin{example}
\begin{align*}
\rho(\Forest{[]}\Forest{[[]]})=&[\Forest{[]},\Forest{[[]]}] \otimes \Forest{[]} + \Forest{[]} \centerdot \Forest{[[]]} \otimes \Forest{[]}\Forest{[]} + \Forest{[]} \centerdot \Forest{[]} \centerdot \Forest{[]} \otimes \Forest{[]}\Forest{[[]]}, \\
\rho(\Forest{[]}\Forest{[[]]}\Forest{[]})=&[[\Forest{[]},\Forest{[[]]}],\Forest{[]}] \otimes \bullet +[\Forest{[]},[\Forest{[[]]},\Forest{[]}]] \otimes \bullet +
 \bullet \centerdot \Forest{[[]]}\centerdot \bullet \otimes \bullet \bullet \bullet\\
  +& [\Forest{[]},\Forest{[[]]}]\centerdot \bullet \otimes \bullet \bullet+\bullet \centerdot [\Forest{[[]]},\bullet] \otimes \bullet \bullet + \bullet \centerdot \bullet \centerdot \bullet \centerdot \bullet \otimes \Forest{[]}\Forest{[[]]}\Forest{[]}, \\
\rho(\Forest{[[][[][]]]})=&\Forest{[[][[][]]]} \otimes \bullet + \Forest{[[]]} \centerdot \bullet \centerdot \bullet \centerdot \bullet \otimes 3\Forest{[[][][]]} + \Forest{[[]]}\centerdot \bullet \centerdot \bullet \centerdot \bullet \otimes \Forest{[[][[]]]}\\
 +& \Forest{[[[]]]} \centerdot \bullet \centerdot \bullet \otimes 2 \Forest{[[][]]}+\Forest{[[][]]} \centerdot \bullet \centerdot \bullet \otimes \Forest{[[][]]}+\Forest{[[[][]]]} \centerdot \bullet \otimes \Forest{[[]]}\\
 +&\Forest{[[][[]]]} \centerdot \bullet \otimes \Forest{[[]]} + [\Forest{[[][]]},\bullet] \centerdot \bullet \otimes \Forest{[[]]} + [\Forest{[[]]},\bullet] \centerdot \bullet \centerdot \bullet \otimes \Forest{[[[]]]}.
\end{align*}
\end{example}

\begin{notation}
We shall in the sequel write $\omega_1 \cdots \omega_n \circ \omega'$ also when the elements are unlabeled. In this case, we mean it as a sum over all ways to pair vertices in $\omega'$ with input elements.
\end{notation}

\begin{proposition} 
\label{prop::cointercation}
The coaction $\rho$ defines a cointeraction between $\mathcal{Q}$ and $\mathcal{H}_N$.
\end{proposition}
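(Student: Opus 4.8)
The plan is to verify the four conditions of Definition~\ref{def::cointeraction} directly, with $A=\mathcal{H}_N$ and $B=\mathcal{Q}$, exploiting that every structure map in sight is the linear dual of a composition or product with respect to the canonical pairing $\langle\,\cdot\,,\cdot\,\rangle$: the coaction $\rho$ is dual to the module action $\circ$, the coproduct $\Delta_N$ is dual to the Grossman--Larson product $\diamond$ by \eqref{MKWcoproduct}, the coproduct $\Delta_{\mathcal{Q}}$ is dual to the operad composition, the shuffle product $\shuffle$ is dual to $\Delta_{\shuffle}$, and the product $\centerdot$ of $\mathcal{Q}$ is dual to the coproduct $\sum_{(\Phi)}\Phi_{(1)}\otimes\Phi_{(2)}$ that splits a monomial $\Phi=\omega_1\centerdot\cdots\centerdot\omega_n$ into all pairs of sub-monomials. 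The normalization condition $\rho(\emptyset)=1\otimes\emptyset$ is the definition of $\rho$, and the counit condition $(Id\otimes\epsilon_N)\rho=1\,\epsilon_N$ is immediate once one observes that the outer forest $\omega'$ in any term of $\rho([\omega])$ has exactly $n$ vertices, where $n$ is the number of factors substituted; hence $\omega'=\emptyset$ forces $\omega=\emptyset$, so $(Id\otimes\epsilon_N)\rho$ is supported on $\emptyset$, where it returns $1$.

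The two remaining conditions are the substance, and I would dispose of each by dualizing it into an identity in $\mathcal{OF}$. Pairing the multiplicativity condition $\rho(\omega_1\shuffle\omega_2)=\rho(\omega_1)\cdot\rho(\omega_2)$ against $\Phi\otimes\omega'$ and using the dualities above shows it is equivalent to
\[
\Delta_{\shuffle}(\Phi\circ\omega')=\sum_{(\Phi)}\ \sum_{\Delta_{\shuffle}\omega'=\omega'_{(1)}\otimes\omega'_{(2)}}(\Phi_{(1)}\circ\omega'_{(1)})\otimes(\Phi_{(2)}\circ\omega'_{(2)}),
\]
that is, substitution intertwines $\Delta_{\shuffle}$ with the splitting of $\omega'$ and of $\Phi$. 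Similarly, since $\Delta_N$ is dual to $\diamond$, pairing the final cointeraction condition $(Id\otimes\Delta_N)\rho=m_{\mathcal{Q}}^{1,3}(\rho\otimes\rho)\Delta_N$ against $\Phi\otimes\omega'_1\otimes\omega'_2$ collapses it to the single distributivity identity
\[
\Phi\circ(\omega'_1\diamond\omega'_2)=\sum_{(\Phi)}(\Phi_{(1)}\circ\omega'_1)\diamond(\Phi_{(2)}\circ\omega'_2).
\]
Thus the whole proposition reduces to showing that substitution distributes over the shuffle (in coproduct form) and over the Grossman--Larson product.

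These two identities I would establish combinatorially from the description of the module action, in which substituting $\Phi$ into $\omega'$ replaces each vertex by a Lie-polynomial cluster, routes the incoming edge to the roots of the cluster, and distributes the outgoing edges over its vertices. A term of $\omega'_1\diamond\omega'_2$ grafts each top-tree of $\omega'_1$ either as a new top-tree or onto a vertex of $\omega'_2$; substituting $\Phi$ afterwards and summing over which factors land on the $\omega'_1$-vertices and which on the $\omega'_2$-vertices is precisely the splitting $\sum_{(\Phi)}$, while the freedom of a grafted edge to reattach to any vertex of a cluster matches the freedom of $\diamond$ to graft a top-tree of $\Phi_{(1)}\circ\omega'_1$ onto any vertex of $\Phi_{(2)}\circ\omega'_2$. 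This yields the $\diamond$-identity; the shuffle identity is the same analysis with the top-trees kept disjoint, together with the fact that each substituted cluster is a Lie polynomial, hence primitive for $\Delta_{\shuffle}$, so no summand of $\Delta_{\shuffle}$ separates trees strictly inside a single cluster. I expect to carry this out by induction on the number of vertices of $\omega'$, using the recursive definition $\Delta_N(\omega)=(Id\otimes B^-)\Delta_N(B^+(\omega))$ together with the grafting rules for $\circ$.

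The main obstacle is the bookkeeping of multiplicities. On one hand the coaction, $\Delta_{\mathcal{Q}}$ and $\Delta_N$ all carry the $\tfrac{1}{n!}$ weights inherited from the symmetrization (bosonic Fock functor), and I must check that these are reproduced consistently on both sides of the two identities, i.e.\ that summing over labellings of the outer object and then symmetrizing is compatible with the splitting of $\Phi$ via $\centerdot$ and with $\diamond$ and $\shuffle$. On the other hand, and more delicately, the cluster-cut claim must be justified while respecting planarity: an admissible planar left cut of $\Phi\circ\omega'$ that severs part of a cluster from the rest has to be matched, with the correct multiplicity and left-to-right order, to a cut of $\omega'$ together with a splitting of the Lie polynomial at the affected vertex, and cuts interior to a single cluster must cancel by primitivity. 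Tracking the leftmost-grafting constraint and these multiplicities simultaneously is where the real work of the proof lies.
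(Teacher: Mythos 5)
Your proposal is correct and follows essentially the same route as the paper: the paper likewise reduces the two substantive axioms to the distributivity of the module action over the Grossman--Larson product (for the $m^{1,3}$ compatibility) and over the coshuffle splitting (for multiplicativity with respect to $\shuffle$), invoking exactly the primitivity of the substituted Lie-polynomial clusters at the step you flag. The only differences are presentational — the paper writes the dualized identities inline as chains of summation identities rather than isolating them as displayed lemmas, and it does not dwell on the $\tfrac{1}{n!}$ and planarity bookkeeping you rightly identify as the residual work.
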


\begin{proof}
We first prove the identity:
\begin{align*}
(Id \otimes \Delta_N)\rho(\omega)=m_\centerdot^{1,3}(\rho \otimes \rho)\Delta_N(\omega).
\end{align*}
The identity is clear when $\omega = \emptyset$. Suppose that $\omega$ is a non-empty forest, then we have
\begin{align*}
(Id \otimes \Delta_N)\rho(\omega)=\sum_{\substack{\omega \text{ is a summand} \atop \text{in } \omega_1\dots \omega_n \circ (\omega' \diamond \omega'') }} \omega_1 \centerdot \dots \centerdot \omega_n \otimes \omega' \otimes \omega''.
\end{align*}
Furthermore
\begin{align*}
\lefteqn{m^{1,3}(\rho \otimes \rho)\Delta_N(\omega)}\\
&= 
m_\centerdot^{1,3}(\rho \otimes \rho)
\sum_{\omega \text{ is a summand} \atop \text{in } \omega' \diamond \omega''} \omega' \otimes \omega'' \\
&=
\sum_{\omega \text{ is a summand} \atop \text{in } \omega' \diamond \omega''}
\sum_{{\omega' \text{ is a summand} \atop \text{in } \omega_1' \cdots \omega_{k_1}' \circ \bar{\omega}'}}\sum_{{\omega'' \text{ is a summand} \atop \text{in } \omega_1'' \cdots \omega_{k_2}'' \circ \bar{\omega}''}} \omega_1' \centerdot \cdots \centerdot \omega_{k_1}' \centerdot \omega_1'' \centerdot \cdots \centerdot \omega_{k_2}'' \otimes \bar{\omega}' \otimes \bar{\omega}'' \\
&=\sum_{{\omega \text{ is a summand} \atop \text{in } (\omega_1' \cdots \omega_{k_1}' \circ \bar{\omega}') \diamond (\omega_1'' \cdots \omega_{k_2}'' \circ \bar{\omega}'')}} \omega_1' \centerdot \cdots \centerdot \omega_{k_1}' \centerdot \omega_1'' \centerdot \cdots \centerdot \omega_{k_2}'' \otimes \bar{\omega}' \otimes \bar{\omega}' \\
&=\sum_{{\omega \text{ is a summand} \atop \text{in } \omega_1\cdots \omega_n \circ (\omega' \diamond \omega'') }} \omega_1 \centerdot \cdots \centerdot \omega_n \otimes \omega' \otimes \omega'',
\end{align*}
which proves the identity. \\
Next we need to prove compatibility with the shuffle product, i.e., the identity:
\begin{align*}
\rho(\omega_a \shuffle \omega_b)=\rho(\omega_a) \rho(\omega_b).
\end{align*}
Recall that, using the natural identification $<\omega_1,\omega_2>=\delta_{\omega_1,\omega_2}$, we can write 
$$
	\omega_a \shuffle \omega_b
	= \sum_{\omega_a \otimes \omega_b \text{ is a} \atop \text{summand in } \Delta_{\shuffle}(\omega)} \omega
	= \sum_{\omega} <\Delta_{\shuffle}(\omega), \omega_a \otimes \omega_b> \omega.
$$
We then have:
\begin{align*}
\lefteqn{\rho(\omega_a \shuffle \omega_b)
= \rho\Big(\sum_{\omega_a \otimes \omega_b \text{ is a} \atop \text{summand in } \Delta_{\shuffle}(\omega)} \omega \Big)} \\
&= 
\sum_{\omega_a \otimes \omega_b \text{ is a} \atop \text{summand in } \Delta_{\shuffle}(\omega)} 
\sum_{\omega \text{ is a summand} \atop \text{in } \omega_1 \cdots \omega_m \circ \omega'} \omega_1 \centerdot \cdots \centerdot \omega_m \otimes \omega' \\
&=
\sum_{\omega_a \otimes \omega_b \text{ is a summand} \atop \text{in } \Delta_{\shuffle}(\omega_1 \cdots \omega_m \circ \omega')} \omega_1 \centerdot \cdots \centerdot \omega_m \otimes \omega' \\
&=
\sum_{\omega_a \otimes \omega_b \text{ is a summand}  \atop \text{in } \Delta_{\shuffle}(\omega_1 \cdots \omega_m \circ \tau_1\cdots\tau_k)} \omega_1 \centerdot \dots \centerdot \omega_m \otimes \tau_1\cdots \tau_k \\
&=
\sum_{\omega_a \otimes \omega_b \text{ is a summand in } \atop \Delta_{\shuffle}((\omega_{1,1}\cdots \omega_{1,n_1} \circ \tau_1) \cdots (\omega_{k,1} \cdots \omega_{k,n_k} \circ \tau_k))} \omega_{1,1} \centerdot \cdots \centerdot \omega_{k,n_k} \otimes \tau_1\cdots\tau_k \\
&=
\sum_{\omega_a \otimes \omega_b \text{ is a summand in } \atop \Delta_{\shuffle}((\omega_{1,1}\cdots \omega_{1,n_1} \circ \tau_1)) \cdots \Delta_{\shuffle}((\omega_{k,1} \cdots \omega_{k,n_k} \circ \tau_k))} \omega_{1,1} \centerdot \dots \centerdot \omega_{k,n_k} \otimes \tau_1\cdots\tau_k \\
&=
\sum_{\omega_a \otimes \omega_b \text{ is a summand in } \atop (\omega_{1,1}\cdots \omega_{1,n_1} \circ \tau_1 \otimes \emptyset + \emptyset \otimes \omega_{1,1}\cdots \omega_{1,n_1} \circ \tau_1) \cdots (\omega_{k,1} \cdots \omega_{k,n_k} \circ \tau_k) \otimes \emptyset + \emptyset +\otimes \omega_{k,1} \cdots \omega_{k,n_k} \circ \tau_k)} \omega_{1,1} \centerdot \dots \centerdot \omega_{k,n_k} \otimes \tau_1\cdots\tau_k \\
&=
\sum_{\omega_a \otimes \omega_b \text{ is a summand in } \atop (\omega_{1,1} \cdots \omega_{1,n} \circ \otimes \omega_{2,1} \cdots \omega_{2,m} \circ)\Delta_{\shuffle}(\omega)} \omega_{1,1} \centerdot \cdots \centerdot \omega_{1,n} \centerdot \omega_{2,m} \centerdot \cdots \centerdot \omega_{2,m} \otimes \omega \\
&=
\sum_{\omega_a \text{ is a summand} \atop \text{in } \omega_{1,1} \cdots \omega_{1,n} \circ \omega_1'} 
\sum_{\omega_b \text{ is a summand} \atop \text{in } \omega_{2,1} \cdots x_{2,m} \circ \omega_2'} \omega_{1,1} \centerdot \cdots \centerdot \omega_{1,n} \centerdot \omega_{2,1} \centerdot \dots \centerdot \omega_{2,m} \otimes \omega_1' \shuffle \omega_2' \\
&=
\Big(\sum_{\omega_a \text{ is a summand} \atop \text{in } \omega_1 \cdots \omega_n \circ \omega' } \omega_1 \centerdot \cdots \centerdot \omega_n \otimes \omega' \Big)
\Big(\sum_{\omega_b \text{ is a summand} \atop  \text{in } \omega_1 \cdots \omega_m \circ \omega'} \omega_1 \centerdot \cdots \centerdot \omega_m \otimes \omega'\Big) \\
&=\rho(\omega_a) \rho(\omega_b),
\end{align*}
where $\tau$'s are trees and we use the fact that the forests on the lefthand side of $\rho$ are primitive to the coshuffle coproduct. \\

The identity
\begin{align*}
\rho(\emptyset)=&1 \otimes \emptyset
\end{align*}
is true by definition. \\
The identity
\begin{align*}
(Id \otimes \delta(\emptyset))\rho=\emptyset \epsilon
\end{align*}
follows from the fact that both sides of the equation only evaluate to something nonzero on scalar multiples of the empty forest. \\
In conclusion, the bialgebras are in cointeraction.
\end{proof}

\begin{corollary}
Let $S(Lie(\mathcal{PT}))^{\ast}$ and $\mathcal{OF}^{\ast}$ denote the linear dual spaces of $S(Lie(\mathcal{PT}))$ and $\mathcal{OF}$, respectively. Define a map $\star: S(Lie(\mathcal{PT}))^{\ast} \otimes \mathcal{OF}^{\ast} \to \mathcal{OF}^{\ast}$ by:
\begin{align*}
(x \star y)(\omega)=(x \otimes y)\rho(\omega).
\end{align*}
Now let $\alpha$ denote a character of $\mathcal{Q}$ and let $a,b \in \mathcal{OF}^{\ast}$ be arbitrary, then:
\begin{align*}
\alpha \star (a \star_{N} b)=(\alpha \star a) \star_{N} (\alpha \star b),
\end{align*}
where $\star_N$ is the convolution product in $\mathcal{H}_N$.
\end{corollary}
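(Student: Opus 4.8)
The plan is to unwind both sides as functionals on an arbitrary forest $\omega \in \mathcal{OF}$ and reduce the claim to the fourth cointeraction identity of Definition \ref{def::cointeraction}, which was already established in Proposition \ref{prop::cointercation}. The two ingredients I expect to use are that identity, namely $(Id \otimes \Delta_N)\rho = m_\centerdot^{1,3}(\rho \otimes \rho)\Delta_N$, together with the fact that $\alpha$ is a \emph{character} of $\mathcal{Q}$, i.e. multiplicative with respect to the product $\centerdot$.

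First I would expand the left-hand side. By the definitions of $\star$ and of the convolution $\star_N$,
$$\big(\alpha \star (a \star_N b)\big)(\omega) = (\alpha \otimes (a \star_N b))\rho(\omega) = (\alpha \otimes a \otimes b)(Id \otimes \Delta_N)\rho(\omega),$$
so the left-hand side is exactly $(\alpha \otimes a \otimes b)$ applied to $(Id \otimes \Delta_N)\rho(\omega)$. Next I would expand the right-hand side. Writing $\Delta_N(\omega) = \sum \omega_{(1)} \otimes \omega_{(2)}$ in Sweedler notation, the definitions of $\star_N$ and $\star$ give
$$\big((\alpha \star a) \star_N (\alpha \star b)\big)(\omega) = \sum (\alpha \otimes a)\rho(\omega_{(1)}) \cdot (\alpha \otimes b)\rho(\omega_{(2)}).$$
Expanding $\rho(\omega_{(1)})$ and $\rho(\omega_{(2)})$ and separating the evaluations, each summand is a product of two values of $\alpha$ on the first tensor slots, together with values of $a$ and $b$ on the second slots. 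Here the character property $\alpha(u \centerdot v) = \alpha(u)\alpha(v)$ is precisely what lets me merge the two $\alpha$-evaluations into a single evaluation of $\alpha$ on the $\centerdot$-product of the two $\mathcal{Q}$-factors. This identifies the right-hand side with $(\alpha \otimes a \otimes b)$ applied to $m_\centerdot^{1,3}(\rho \otimes \rho)\Delta_N(\omega)$, where $m_\centerdot^{1,3}$ multiplies the first and third tensor factors using the product $\centerdot$ of $\mathcal{Q}$.

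Finally I would invoke the cointeraction identity from Proposition \ref{prop::cointercation}. Since both sides of the corollary have now been rewritten as $(\alpha \otimes a \otimes b)$ applied to the two (equal) sides of $(Id \otimes \Delta_N)\rho = m_\centerdot^{1,3}(\rho \otimes \rho)\Delta_N$, they agree on every $\omega$, and the identity of functionals follows.

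The computation is essentially bookkeeping, and the cointeraction identity does all the real work; the only step requiring any care is the merging of the two $\alpha$-evaluations, where I would track the Sweedler indices so that the $\mathcal{Q}$-factor coming from $\rho(\omega_{(1)})$ and that coming from $\rho(\omega_{(2)})$ are combined under $\centerdot$ in the slots matched by $m_\centerdot^{1,3}$. Since $\centerdot$ is commutative and $\alpha$ is multiplicative this is harmless, and it is exactly the character hypothesis on $\alpha$ that converts the product of two scalar values into the single value needed to align with $m_\centerdot^{1,3}(\rho \otimes \rho)\Delta_N$; without it the two sides would not coincide.
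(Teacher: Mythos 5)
Your proposal is correct and follows essentially the same route as the paper: both expand the two sides to $(\alpha \otimes a \otimes b)(Id \otimes \Delta_N)\rho$ and $(\alpha \otimes a \otimes \alpha \otimes b)(\rho \otimes \rho)\Delta_N$, use the character property of $\alpha$ to absorb the two $\alpha$-slots into one via $m_\centerdot^{1,3}$, and then conclude from the cointeraction identity of Proposition \ref{prop::cointercation}. Your write-up just makes the Sweedler-notation bookkeeping more explicit than the paper does.
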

\begin{proof}
We have
\begin{align*}
\alpha \star (a \star_N b)=(\alpha \otimes a \otimes b)(Id \otimes \Delta_N)\rho
\end{align*}
and
\begin{align*}
(\alpha \star a) \star_N (\alpha \star b)=(\alpha \otimes a \otimes \alpha \otimes b)(\rho\otimes \rho)\Delta_{N}.
\end{align*}
The statement now follows from Proposition \ref{prop::cointercation} and the fact that $\alpha$ is a character with respect to the symmetric product on $\mathcal{Q}$.
\end{proof}

The following proposition states that the bialgebra $\mathcal{H}$ used for $B$-series substitution can be recovered from the bialgebra $\mathcal{Q}$.

\begin{proposition}
Define a map $\pi: S(Lie(\mathcal{PT}))\to \mathcal{F}$ by $\pi(\tau)$ being the unique non-planar tree $\tau'$ such that $\tau$ is a planar embedding of $\tau'$, whenever $\tau$ is a tree and extended by:
\begin{align*}
\pi(1)=&\emptyset,\\
\pi([\tau_1,\tau_2])=&0, \\
\pi(\tau_1 \centerdot \tau_2)=&\pi(\tau_1)\pi(\tau_2).
\end{align*}
Then $\pi: \mathcal{Q} \to \mathcal{H}$ is a surjective biagebra morphism. Hence $\mathcal{Q}/\ker(\pi) \simeq \mathcal{H}$.
\end{proposition}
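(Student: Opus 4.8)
The plan is to check separately that $\pi$ is an algebra morphism sending the unit to the unit, that it is compatible with the coproducts and counits, and that it is surjective; the isomorphism $\mathcal{Q}/\ker(\pi)\simeq\mathcal{H}$ is then the first isomorphism theorem applied to the surjective bialgebra morphism $\pi$. The algebra-morphism property is built into the definition: $\pi$ is multiplicative for $\centerdot$ with $\pi(1)=\emptyset$, and both $\centerdot$ and the product of $\mathcal{H}$ are commutative concatenation, so the only thing to note is that $\pi$ is well defined, being the composite of the projection $Lie(\mathcal{PT})\to\mathcal{PT}$ onto bracket-length one (killing genuine brackets) with the planarity-forgetting map $\mathcal{PT}\to\mathcal{T}$, extended as an algebra map. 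Surjectivity is equally direct: any non-planar forest $\tau_1'\cdots\tau_k'$ has planar lifts $\tau_i$ of the $\tau_i'$, and then $\pi(\tau_1\centerdot\cdots\centerdot\tau_k)=\tau_1'\cdots\tau_k'$.

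For the counit I will verify $\epsilon_{\mathcal{H}}\circ\pi=\epsilon_{\mathcal{Q}}$ on Lie monomials. Here $\epsilon_{\mathcal{H}}$ sends $\emptyset$ and $\bullet$ to $1$ and every tree carrying at least one edge to $0$. Both composites then equal $1$ on $1$ and on $\bullet$, and vanish on genuine brackets (where $\pi=0$) and on trees with an edge (where $\pi$ yields a non-planar tree with an edge); multiplicativity of all the maps involved extends the equality to all of $S(Lie(\mathcal{PT}))$.

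The substance of the statement is the coproduct identity $(\pi\otimes\pi)\Delta_{\mathcal{Q}}=\Delta_{\mathcal{H}}\circ\pi$. Since $\Delta_{\mathcal{Q}}$, $\Delta_{\mathcal{H}}$ and $\pi$ are multiplicative, it suffices to check it on a generator $\omega\in Lie(\mathcal{PT})$. If $\omega$ is a genuine bracket, the right-hand side is $\Delta_{\mathcal{H}}(0)=0$; on the left, $\pi\otimes\pi$ annihilates every term of $\Delta_{\mathcal{Q}}(\omega)$ containing a bracket in either slot, and the surviving terms have all pieces $\omega_i$ and the quotient $\omega'$ equal to single planar trees. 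By Proposition \ref{prop::3} an all-tree composition $\omega_1\cdots\omega_n\circ\omega'$ is a sum of single trees, so it cannot have the bracket $\omega$ as a summand, and the left-hand side vanishes as well. If $\omega=\tau$ is a single planar tree with shadow $\tau'=\pi(\tau)$, I discard the bracket-containing terms and keep the all-tree terms; by the verbatim planar analogue of the proof of Proposition \ref{prop::1} (the factor $1/n!$ cancelling the relabellings of the quotient), these all-tree terms enumerate the planar subtree contractions of $\tau$, each contributing its extracted planar subforest tensored with the planar quotient.

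The decisive step is to match these planar subtree contractions of $\tau$ against the non-planar subtree contractions appearing in $\Delta_{\mathcal{H}}(\tau')$. The key observation is that a planar subtree contraction of $\tau$ is simply a partition of the vertex set into connected blocks, with the planar structure of each block and of the contracted quotient inherited from $\tau$ rather than chosen; the non-planar subtree contractions of $\tau'$ are the same connected partitions of the same underlying graph. Forgetting planarity via $\pi\otimes\pi$ carries the planar term attached to a partition exactly onto its non-planar term, because restriction to a block and contraction of a block commute with forgetting the embedding. Hence each non-planar term of $\Delta_{\mathcal{H}}(\tau')$ is produced with exactly the multiplicity with which its underlying connected partition occurs, so the coefficients agree on both sides. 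I expect the main obstacle to be this multiplicity bookkeeping: confirming that no retained planar datum splits one non-planar term into several, and that the $1/n!$ normalisation of $\Delta_{\mathcal{Q}}$ reproduces precisely the unordered-forest counting intrinsic to $\Delta_{\mathcal{H}}$ (such as the coefficient $2$ occurring in its examples).
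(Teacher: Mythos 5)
Your overall strategy --- reduce to generators, dispose of the bracket case by noting that an all-tree composition is a sum of trees, and then match the surviving planar contraction terms of $\Delta_{\mathcal{Q}}(\tau)$ against the spanning-subforest terms of $\Delta_{\mathcal{H}}(\pi(\tau))$ --- is a genuinely combinatorial route and differs from the paper's. The paper argues at the level of operadic expressions: writing $\tau$ as a $\graft$-monomial in single-vertex trees and replacing $\graft$ by $\curvearrowright$ throughout yields exactly $\pi(\tau)$, so the post-Lie composition $\tau_1\cdots\tau_n\circ\tau'$ is carried onto the pre-Lie composition $\pi(\tau_1)\cdots\pi(\tau_n)\circ\pi(\tau')$ and the coproduct identity follows by duality, with no contraction combinatorics needed. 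Your treatment of the bracket case, the counit and surjectivity is fine, and is in fact more explicit than the paper's.

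The gap is exactly where you flag it, and it is not a formality: the assertion that the all-tree terms of $\Delta_{\mathcal{Q}}(\tau)$ are enumerated by connected partitions of the vertex set, each carrying ``inherited'' planar data and each contributing once, is the entire content of the step, and it is false in the naive labelled sense. Because the post-Lie composition attaches the contracted edges in the \emph{leftmost} position, $\tau_1\cdots\tau_n\circ\tau'$ does not put the edges back where they were in $\tau$: for the planar cherry $\Forest{[[][]]}$ and the partition whose two-vertex block contains the root and the \emph{left} leaf, the re-expansion only produces the cherry with the re-attached leaf on the left, which agrees with $\tau$ solely as an \emph{unlabelled} planar tree. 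So one must argue at the unlabelled level that (i) every connected partition is planar-admissible, i.e.\ some leftmost-compatible placement of the re-attached edges recovers $[\tau]$ (for labelled configurations this is a genuine restriction, cf.\ condition 2 of Proposition \ref{prop::8}); (ii) $[\tau]$ occurs exactly once in the resulting composition, with no overcounting from several edge placements or from several planar embeddings of the quotient, which in general is a sum rather than a single tree (cf.\ Section \ref{section::4}); and (iii) the $1/n!$ together with the labelling count reproduces the multiplicities of $\Delta_{\mathcal{H}}$, e.g.\ the coefficient $2$ in $\Delta_{\mathcal{H}}(\Forest{[[][]]})$, which on the non-planar side counts two distinct spanning subforests but on the planar side arises from the labelling bookkeeping of a single unlabelled composition. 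Your appeal to a ``verbatim planar analogue'' of Proposition \ref{prop::1} hides precisely these points, since that proof relies on the quotient and the re-insertion being unconstrained in the non-planar setting. Until this is carried out, the decisive step is an assertion rather than a proof; the paper's expression-rewriting argument is exactly the device that sidesteps it.
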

\begin{proof}
It is clear that $\pi$ is an algebra morphism by definition, it remains to show that $\pi$ is a coalgebra morphism, i.e. that
\begin{align*}
(\pi \otimes \pi)\Delta_{\mathcal{Q}}=\Delta_{\mathcal{H}}\pi.
\end{align*}
It is furthermore clear that both sides in this equation vanishes on Lie brackets. Suppose that $\tau$ is a tree, then:
\begin{align*}
(\pi \otimes \pi)\Delta_{\mathcal{Q}}(\tau)=&\sum_{\tau \text{ is a summand} \atop \text{in } \tau_1\cdots \tau_n \circ \tau'} \pi(\tau_1)\cdots\pi(\tau_n) \otimes \pi(\tau')
\end{align*}
and
\begin{align*}
\Delta_{\mathcal{H}}(\pi(\tau))=\sum_{\pi(\tau) \text{ is a summand} \atop \text{in } \tau_1 \cdots \tau_n \circ \tau'} \tau_1 \cdots \tau_n \otimes \tau'.
\end{align*}
These have to be equal since you get the expression of $\pi(\tau)$ in terms of the single-vertex tree and pre-Lie grafting from the expression of $\tau$ in terms of the single vertex tree and post-Lie grafting by replacing the post-Lie grafting product by the pre-Lie grafting product in the expression.
\end{proof}

\section{Substitution in LB-series} 
\label{section::5}

Now consider the free $D$-algebra $\mathcal{OF}$. It is graded by the number of vertices in a forest. Denote the completion of $\mathcal{OF}$, with respect to this grading, by $\widetilde{\mathcal{OF}}$. Denote its dual by $\mathcal{OF}^{\ast}$, then there is a bijection $\delta:\widetilde{\mathcal{OF}} \to \mathcal{OF}^{\ast}$ given by the dual basis. An LB-series can then be expressed as
\begin{align*}
LB(a,\alpha)=F_a(\delta^{-1}(\alpha)).
\end{align*}
We shall use this description in the sequel.

\begin{lemma}
Let $\alpha\in Lie(\mathcal{PT})^{\ast}$. The map $B_{\alpha}:\mathcal{OF} \to \widehat{\mathcal{OF}}$ given by $B_{\alpha}(\omega)=\delta^{-1}(\alpha \star \delta(\omega))$ is a D-algebra morphism.
\end{lemma}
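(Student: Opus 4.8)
The plan is to turn the abstract definition $B_\alpha(\omega)=\delta^{-1}(\alpha\star\delta(\omega))$ into the concrete statement that $B_\alpha$ plants a fixed series of Lie polynomials into every vertex of $\omega$ via the module action $\circ$, and then to read the three defining properties of a $D$-algebra morphism directly off the combinatorics of that action.

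First I would unfold the definition. Regarding $\alpha$ as extended multiplicatively to a character of $\mathcal{Q}=S(Lie(\mathcal{PT}))$ (exactly as $\alpha$ is extended to $\mathcal{H}$ in Theorem \ref{thm::Bsub}), and using $(\alpha\star\delta(\omega))(\omega'')=(\alpha\otimes\delta(\omega))\rho(\omega'')$ together with the definition of $\rho$, the pairing $\langle\delta(\omega),\omega'\rangle$ vanishes unless $\omega'=\omega$, so only terms with $\omega'=\omega$ survive. With $n$ the number of vertices of $\omega$, this yields
\[
B_\alpha(\omega)=\frac{1}{n!}\sum_{\omega_1,\dots,\omega_n}\alpha(\omega_1)\cdots\alpha(\omega_n)\,(\omega_1\cdots\omega_n\circ\omega),
\]
the sum running over labelled Lie monomials and $\circ$ denoting the module action of $\widehat{Lie(\mathcal{PT})}$ on $\widehat{\mathcal{OF}}$. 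In particular $a_\alpha:=B_\alpha(\bullet)=\sum_{\eta}\alpha(\eta)\,\eta$ is a formal series of Lie polynomials, hence a derivation of $\widetilde{\mathcal{OF}}$, and the displayed formula exhibits $B_\alpha$ as substitution of $a_\alpha$ into each vertex.

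Next I would verify the three conditions of a $D$-algebra morphism. The cleanest packaging uses freeness: since $a_\alpha\in\mathcal{D}(\widetilde{\mathcal{OF}})$ and $\mathcal{OF}$ is the free $D$-algebra on $\bullet$, there is a unique $D$-algebra morphism $\widetilde{B}_\alpha$ with $\widetilde{B}_\alpha(\bullet)=a_\alpha$; because the module action $\circ$ is defined by substituting into the presentation of a forest in terms of $\bullet$, concatenation and $\graft$, the morphism $\widetilde{B}_\alpha$ acts by planting $a_\alpha$ in every vertex, so it coincides with $B_\alpha$ by the formula above. Even without invoking freeness, two of the three conditions are immediate from the combinatorial description of $\circ$: the derivation condition $B_\alpha(\mathcal{D}(\mathcal{OF}))\subseteq\mathcal{D}(\widetilde{\mathcal{OF}})$ holds because substituting Lie polynomials into the vertices of a Lie monomial again produces a Lie polynomial, which is precisely the operad structure of Proposition \ref{prop::3}; and the concatenation condition $B_\alpha(\omega_1\cdot\omega_2)=B_\alpha(\omega_1)\cdot B_\alpha(\omega_2)$ is clear, since concatenation merely juxtaposes trees while substitution is local to each vertex.

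The main obstacle is the grafting condition $B_\alpha(\omega_1\graft\omega_2)=B_\alpha(\omega_1)\graft B_\alpha(\omega_2)$: here one must show that substitution commutes with the creation of grafting edges, so that an edge from a root of $\omega_1$ to a vertex of $\omega_2$ turns, after substitution, into the $\graft$-grafting of the planted series onto the planted series. This is exactly the content that makes the module action well defined --- namely that $\circ$ commutes with the $D$-algebra rewriting rules because Lie polynomials (derivations) are sent to Lie polynomials --- so the delicate part is the bookkeeping needed to transfer that compatibility to $B_\alpha$: keeping track of the symmetrization factors $1/n!$, the passage between labelled forests in $\widehat{\mathcal{OF}}$ and the completion $\widetilde{\mathcal{OF}}$, and the extension of $\graft$ to formal series. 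As a cross-check, the Corollary immediately preceding this Lemma already supplies the companion identity $B_\alpha(\omega_1\diamond\omega_2)=B_\alpha(\omega_1)\diamond B_\alpha(\omega_2)$ for the Grossman--Larson product, confirming that $B_\alpha$ is compatible with the second associative product on $\widetilde{\mathcal{OF}}$ as well.
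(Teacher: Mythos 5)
Your argument is correct, but it takes a genuinely different route from the paper's. The paper never identifies $B_{\alpha}$ with a substitution operator inside the proof of this lemma; it verifies the morphism axioms one at a time: compatibility with the Grossman--Larson product $\diamond$ follows from the cointeraction corollary $\alpha \star (a \star_N b)=(\alpha\star a)\star_N(\alpha\star b)$ together with the duality between $\Delta_N$ and $\diamond$; compatibility with concatenation is proved by splitting the left tensor leg of $\rho(\omega)$ into the part contracted into $\omega_1$ and the part contracted into $\omega_2$ and using multiplicativity of the character; compatibility with $\graft$ is then deduced from these two; and the derivation condition is obtained by showing that $B_{\alpha}$ is a coshuffle morphism, using $B_{\alpha}(\tau)=B_{\alpha}(B^-(\tau))\graft \delta^{-1}(\alpha)$, the identity $\Delta_{\shuffle}(A\graft B)=(A_{(1)}\graft B_{(1)})\otimes(A_{(2)}\graft B_{(2)})$, and the primitivity of the Lie polynomial $\delta^{-1}(\alpha)$. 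You instead unfold $B_{\alpha}$ into the vertex-substitution operator $\omega\mapsto a_{\alpha}\cdots a_{\alpha}\circ\omega$ and invoke freeness of $(\mathcal{OF},\cdot,\graft)$, so that all three axioms come for free once that identification is made. Your route is shorter and more conceptual, and it effectively establishes $B_{\hat{\alpha}}=A_{\alpha}$ (the key step of the substitution theorem that follows) directly, whereas the paper derives that identity \emph{from} the lemma by uniqueness of $D$-algebra morphisms agreeing on $\bullet$. What your route gives up is the explicit coshuffle-morphism property of $B_{\alpha}$, which the paper reuses immediately afterwards to show that $\hat{\alpha}\star$ is an automorphism of the group of exponential maps; and it concentrates all the real work in the identification $B_{\alpha}=a_{\alpha}\cdots a_{\alpha}\circ(\cdot)$, which you correctly flag as requiring care with the $1/n!$ symmetrization, the labelled/unlabelled bookkeeping, and the passage to the completion $\widetilde{\mathcal{OF}}$, but do not carry out in detail.
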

\begin{proof}
Recall that $\Delta_N$ is dual to the planar Grossman--Larson product \eqref{planarGLprod}. Hence
\begin{align*}
\delta(\omega_1 \diamond \omega_2)(\omega)=(\delta(\omega_1) \star_{N} \delta(\omega_2))(\omega).
\end{align*}
Then we get:
\begin{align*}
B_{\alpha}(\omega_1 \diamond \omega_2)=&\delta^{-1}(\alpha \star \delta(\omega_1 \diamond \omega_2)) \\
=&\delta^{-1}(\alpha \star (\delta(\omega_1) \star_N \delta(\omega_2))) \\
=&\delta^{-1}( (\alpha \star \delta(\omega_1)) \star_N (\alpha \star \delta(\omega_2)  )  ) \\
=&\delta^{-1}(\alpha \star \delta(\omega_1)  ) \diamond \delta^{-1}(\alpha \star \delta(\omega_2)) \\
=&B_{\alpha}(\omega_1) \diamond B_{\alpha}(\omega_2).
\end{align*}
Using Sweedler's notation, $\rho(\omega)=\omega_{(1)} \otimes \omega_{(2)}$, we have that $B_{\alpha}(\omega_1 \omega_2)$ is the sum over all forests $\omega$ such that $\omega_{(2)}$ contains $\omega_1\omega_2$, multiplied by the corresponding $\alpha(\omega_{(1)})$. However, $\omega_{(1)}$ can be split into a part $\omega^1_{(1)}$ consisting of forests that got contracted into vertices in $\omega_1$, and a part $\omega^2_{(1)}$. Then $\omega_{(1)}=\omega^1_{(1)}\centerdot \omega^2_{(2)}$ and $\alpha(\omega_{(1)})=\alpha(\omega^1_{(1)})\alpha(\omega^2_{(1)})$. Hence
\begin{align*}
B_{\alpha}(\omega_1 \omega_2)=B_{\alpha}(\omega_1)B_{\alpha}(\omega_2).
\end{align*}
This then implies:
\begin{align*}
B_{\alpha}(\omega_1 \graft \omega_2)=B_{\alpha}(\omega_1) \graft B_{\alpha}(\omega_2).
\end{align*}
Lastly, we need to show that $B_{\alpha}$ maps derivations to derivations. This follows if we show that $B_{\alpha}$ is a coshuffle morphism, i.e. we have to show that:
\begin{align*}
(B_{\alpha} \otimes B_{\alpha})\Delta_{\shuffle}=\Delta_{\shuffle}B_{\alpha}.
\end{align*}
Let $\tau$ be a tree, then:
\begin{align*}
B_{\alpha}(\tau)=&B_{\alpha}(B^-(\tau) \graft \bullet) \\
=&B_{\alpha}(B^-(\tau)) \graft B_{\alpha}(\bullet) \\
=&B_{\alpha}(B^-(\tau))\graft \delta^{-1}(\alpha).
\end{align*}
Recall from \cite{Ebrahimi-FardLundervoldMunthe-Kaas2014} the identity:
\begin{align*}
\Delta_{\shuffle}(A \graft B)=(A_{(1)} \graft B_{(1)}) \otimes (A_{(2)} \graft B_{(2)}).
\end{align*}
Furthermore, recall that $\delta^{-1}(\alpha)$ is a Lie polynomial. Hence:
\begin{align*}
\Delta_{\shuffle}(B_{\alpha}(\tau))=&B_{\alpha}(B^-(\tau))_{(1)}\graft \emptyset \otimes B_{\alpha}(B^-(\tau)) \graft \delta^{-1}(\alpha) + B_{\alpha}(B^-(\tau))_{(1)} \graft \delta^{-1}(\alpha) \otimes B_{\alpha}(B^-(\tau))_{(2)} \graft \emptyset \\
=& \emptyset \otimes B_{\alpha}(\tau) + B_{\alpha}(\tau) \otimes \emptyset \\
=&(B_{\alpha} \otimes B_{\alpha})\Delta_{\shuffle}(\tau).
\end{align*}
Now for an arbitrary forest, we get:
\begin{align*}
\Delta_{\shuffle}(B_{\alpha}(\tau_1 \cdots \tau_n))
&= \Delta_{\shuffle}(B_{\alpha}(\tau_1)\cdots B_{\alpha}(\tau_m)) \\
&= \Delta_{\shuffle}(B_{\alpha}(\tau_1)) \cdots \Delta_{\shuffle}(B_{\alpha}(\tau_n)) \\
&= (B_{\alpha} \otimes B_{\alpha})\Delta_{\shuffle}(\tau_1)\cdots\Delta_{\shuffle}(\tau_n) \\
&= (B_{\alpha} \otimes B_{\alpha})\Delta_{\shuffle}(\tau_1 \cdots \tau_n).
\end{align*}
Hence $B_{\alpha}$ is a coshuffle morphism and therefore maps derivations to derivations. In conclusion, $B_{\alpha}$ is a $D$-algebra morphism.
\end{proof}

\begin{remark}
Let $\alpha \in \mathcal{OF}^{\ast}$ be a logarithmic linear map, then it can be described by Lie polynomials in the dual basis, e.g.
\begin{align*}
\alpha=\delta(\Forest{[[]]}\Forest{[]})-\delta(\Forest{[]}\Forest{[[]]})
\end{align*}
is logarithmic. Then by the embedding $Lie(\mathcal{PT})\subset \mathcal{OF}$, we can view $\alpha$ as a linear map $\hat{\alpha}\in Lie(\mathcal{PT})^{\ast}$. For the example $\alpha$ in this remark, we would have:
\begin{align*}
\hat{\alpha}=\delta([\Forest{[[]]},\Forest{[]}]).
\end{align*}
\end{remark}

\begin{theorem}
Let $\alpha,\beta$ be linear maps on $\mathcal{OF}$, with $\alpha$ defining a logarithmic LB-series. Since $\alpha$ is logarithmic, we can view it as a linear map $\hat{\alpha}\in Lie(\mathcal{PT})^{\ast}$. Extend $\hat{\alpha}$ to be a character on $S(Lie(\mathcal{PT}))$, then:
\begin{align*}
LB(LB(a,\alpha),\beta)=LB(a,\hat{\alpha} \star\beta).
\end{align*}
\end{theorem}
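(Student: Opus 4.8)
The plan is to factor the $D$-algebra morphism attached to the inner series through the substitution operator $B_{\hat\alpha}$ supplied by the preceding lemma, and then to appeal to the universal property of $\mathcal{OF}$. Write $b:=LB(a,\alpha)=F_a(\delta^{-1}(\alpha))$. Since $\alpha$ is logarithmic, $\delta^{-1}(\alpha)$ is a Lie polynomial, hence a derivation in $\mathcal{OF}$; therefore $b$ is a derivation in the target $D$-algebra $\mathbf{D}$ and $F_b$ is the \emph{unique} $D$-algebra morphism sending $\bullet$ to $b$. On the other hand the lemma shows that $B_{\hat\alpha}\colon\mathcal{OF}\to\widetilde{\mathcal{OF}}$ is a $D$-algebra morphism, and $F_a$ is one by construction; extending $F_a$ degreewise to the completion $\widetilde{\mathcal{OF}}$ (as is already implicit in the very definition of an LB-series), the composite $F_a\circ B_{\hat\alpha}\colon\mathcal{OF}\to\mathbf{D}$ is again a $D$-algebra morphism. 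The entire statement then reduces to the identity $F_b=F_a\circ B_{\hat\alpha}$.

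By the universal property it suffices to verify that these two morphisms agree on the single generator $\bullet$. First I would compute $B_{\hat\alpha}(\bullet)$. By definition $B_{\hat\alpha}(\bullet)=\delta^{-1}(\hat\alpha\star\delta(\bullet))$, and for any forest $\omega$ we have $(\hat\alpha\star\delta(\bullet))(\omega)=(\hat\alpha\otimes\delta(\bullet))\rho(\omega)$. In the expansion $\rho(\omega)=\sum\frac{1}{n!}\,\omega_1\centerdot\cdots\centerdot\omega_n\otimes\omega'$ the factor $\delta(\bullet)$ retains only the terms whose right leg is the single vertex, i.e.\ $\omega'=\bullet$; for such a term the outer forest is a single vertex, so $n=1$ and the substitution $\omega_1\circ\bullet=\omega_1$ returns the inserted Lie polynomial unchanged. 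Summing $\hat\alpha$ against these contributions (using the embedding $Lie(\mathcal{PT})\subset\mathcal{OF}$) reproduces exactly the coefficient $\alpha(\omega)$ of $\omega$ in $\delta^{-1}(\alpha)$, so that $\hat\alpha\star\delta(\bullet)=\alpha$ and hence $B_{\hat\alpha}(\bullet)=\delta^{-1}(\alpha)$. Consequently
\[
(F_a\circ B_{\hat\alpha})(\bullet)=F_a(\delta^{-1}(\alpha))=b=F_b(\bullet),
\]
and uniqueness of the $D$-algebra morphism determined by its value on $\bullet$ gives $F_b=F_a\circ B_{\hat\alpha}$.

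Finally I would evaluate both sides of this morphism identity at $\delta^{-1}(\beta)\in\widetilde{\mathcal{OF}}$. Reading the defining relation $\delta(B_{\hat\alpha}(x))=\hat\alpha\star\delta(x)$ backwards yields $B_{\hat\alpha}(\delta^{-1}(\beta))=\delta^{-1}(\hat\alpha\star\beta)$, whence
\[
LB(LB(a,\alpha),\beta)=F_b(\delta^{-1}(\beta))=F_a\bigl(B_{\hat\alpha}(\delta^{-1}(\beta))\bigr)=F_a\bigl(\delta^{-1}(\hat\alpha\star\beta)\bigr)=LB(a,\hat\alpha\star\beta),
\]
as claimed. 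The genuinely new input is the identification $B_{\hat\alpha}(\bullet)=\delta^{-1}(\alpha)$, that is, the observation that collapsing the outer forest of the coaction to a single vertex recovers $\hat\alpha$ itself; everything else is a formal consequence of the lemma together with the freeness of $\mathcal{OF}$. The main obstacle I anticipate is therefore not algebraic but the bookkeeping around completions: one must check that $B_{\hat\alpha}$ and $F_a$ extend compatibly and degreewise to $\widetilde{\mathcal{OF}}$, so that $F_a\circ B_{\hat\alpha}$ is well defined and still satisfies the $D$-algebra axioms, and that evaluating it on the (generally infinite) series $\delta^{-1}(\beta)$ is legitimate term by term.
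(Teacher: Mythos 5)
Your proposal is correct and follows essentially the same route as the paper: both reduce the theorem to the freeness of $\mathcal{OF}$ as a $D$-algebra, identifying the morphism attached to the inner series with $F_a\circ B_{\hat{\alpha}}$ by checking agreement on the generator $\bullet$ (the paper phrases this as $A_{\alpha}=B_{\hat{\alpha}}$ for the unique $D$-algebra morphism $A_{\alpha}$ with $A_{\alpha}(\bullet)=\delta^{-1}(\alpha)$). Your explicit verification that $\hat{\alpha}\star\delta(\bullet)=\alpha$ is a welcome elaboration of a step the paper dismisses as clear.
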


\begin{proof}
Denote by $A_{\alpha}: \mathcal{OF} \to \mathcal{OF}^{\ast}$ the unique $D$-algebra morphism given by $A_{\alpha}(\bullet)=\delta^{-1}(\alpha)$. Extend $A_{\alpha}$ to be defined on $\widetilde{\mathcal{OF}}$. Then:
\begin{align*}
F_{LB(a,\alpha)}=F_a \circ A_{\alpha},
\end{align*}
where $\circ$ means composition of functions. So that:
\begin{align*}
LB(LB(a,\alpha),\beta)=F_a(A_{\alpha}(\delta^{-1}(\beta))).
\end{align*}
Furthermore:
\begin{align*}
LB(a,\hat{\alpha} \star \beta)=&F_a(\delta^{-1}(\hat{\alpha} \star \beta))\\
=&F_a(B_{\hat{\alpha}}(\delta^{-1}(\beta))).
\end{align*}
Now the theorem follows if $B_{\hat{\alpha}}=A_{\alpha}$. However, it is clear that $B_{\hat{\alpha}}(\bullet)=A_{\alpha}(\bullet)$. Then equality everywhere follows as both maps are $D$-algebra morphisms.
\end{proof}

It is now worthwhile to relate the previous result, with the recursive substitution formula from \cite{LundervoldMunthe-Kaas2011}.

\begin{remark}
It is proved in \cite{LundervoldMunthe-Kaas2011} that
\begin{align*}
LB(LB(a,\alpha),\beta)=LB(a,A_{\alpha}(\beta)). 
\end{align*}
however, no efficient method for evaluating $A_{\alpha}$ is given. Instead, the dual operator, $A_{\alpha}^\dagger$, is defined by
\begin{align*}
\langle A_{\alpha}(\omega_1),\omega_2 \rangle = \langle \omega_1,A_{\alpha}^\dagger(\omega_2) \rangle.
\end{align*}
Then a recursive formula for evaluating $A_{\alpha}^\dagger$ is shown. Now the observation that $B_{\hat{\alpha}}=A_{\alpha}$ yields:
\begin{align*}
A_{\alpha}^\dagger(\omega)=\hat{\alpha}(\omega_{(1)})\omega_{(2)}.
\end{align*}
In particular, this means that the recursive formula given for $A_{\alpha}^\dagger$ can be used to evaluate $\rho$.
\end{remark}

\begin{proposition}
Let $\hat{\alpha} \in Lie(\mathcal{PT})^{\ast}$, then the map $\hat{\alpha} \star : \mathcal{OF}^{\ast} \to \mathcal{OF}^{\ast}$ is an automorphism over the group of exponential linear maps.
\end{proposition}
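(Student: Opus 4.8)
The plan is to identify the group of exponential linear maps with the group of characters of $\mathcal{H}_N$ under the convolution product $\star_N$, which is the group furnished by Theorem \ref{thm::1}, and then to verify in turn that $\hat{\alpha}\star$ is a well-defined endomorphism of this group and that it is bijective. Since $\hat{\alpha}$ has been extended to a character of $\mathcal{Q}=(S(Lie(\mathcal{PT})),\centerdot,\Delta_{\mathcal{Q}},1,\epsilon)$, the multiplicativity of $\hat{\alpha}\star$ with respect to $\star_N$ is already supplied by the Corollary to Proposition \ref{prop::cointercation}, namely $\hat{\alpha}\star(a\star_N b)=(\hat{\alpha}\star a)\star_N(\hat{\alpha}\star b)$. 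Thus the only genuinely new points to establish are that $\hat{\alpha}\star$ preserves exponentiality and that it is invertible.

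First I would check that $\hat{\alpha}\star a$ is exponential whenever $a$ is. Writing $\rho(\omega)=\omega_{(1)}\otimes\omega_{(2)}$ in Sweedler notation, the shuffle-compatibility $\rho(\omega_1\shuffle\omega_2)=\rho(\omega_1)\rho(\omega_2)$ established in Proposition \ref{prop::cointercation} expands as $\sum \omega_{1,(1)}\centerdot\omega_{2,(1)}\otimes\omega_{1,(2)}\shuffle\omega_{2,(2)}$. Evaluating $\hat{\alpha}\otimes a$ on this and using that $\hat{\alpha}$ is multiplicative for $\centerdot$ while $a$ is multiplicative for $\shuffle$ factors the result as $(\hat{\alpha}\star a)(\omega_1)\,(\hat{\alpha}\star a)(\omega_2)$; together with $(\hat{\alpha}\star a)(\emptyset)=\hat{\alpha}(1)a(\emptyset)=1$, read off from $\rho(\emptyset)=1\otimes\emptyset$, this shows $\hat{\alpha}\star a$ is exponential. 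Hence $\hat{\alpha}\star$ is an endomorphism of the group of exponential maps.

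For bijectivity I would exhibit an explicit inverse via the group-action structure of $\star$. Dualizing the associativity of the module $\widehat{\mathcal{OF}}$ over the operad $\widehat{Lie(\mathcal{PT})}$ yields the comodule coassociativity $(\Delta_{\mathcal{Q}}\otimes Id)\rho=(Id\otimes\rho)\rho$, and dualizing the action of the single-vertex tree yields $(\epsilon\otimes Id)\rho=Id$. Pairing these against functionals translates them into the identities $(\gamma_1\star_{\mathcal{Q}}\gamma_2)\star a=\gamma_1\star(\gamma_2\star a)$, where $\star_{\mathcal{Q}}$ denotes the convolution product dual to $\Delta_{\mathcal{Q}}$, and $\epsilon\star a=a$. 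Thus $\star$ is an action of the monoid of characters of $\mathcal{Q}$ on $\mathcal{OF}^{\ast}$, and the inverse of $\hat{\alpha}\star$ is $\hat{\alpha}^{-1}\star$ as soon as $\hat{\alpha}$ admits a convolution inverse $\hat{\alpha}^{-1}$ among the characters of $\mathcal{Q}$; note $\hat{\alpha}^{-1}\star$ then preserves exponentiality by the same argument as above, so $\hat{\alpha}\star$ restricts to a bijection of exponential maps.

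The main obstacle will be this last invertibility, because $\mathcal{Q}$ is graded by $|\cdot|$ but \emph{not} connected: its degree-zero part is the polynomial algebra generated by $\bullet$, which is grouplike since one checks $\Delta_{\mathcal{Q}}(\bullet)=\bullet\otimes\bullet$. A character $\hat{\alpha}$ therefore possesses a $\star_{\mathcal{Q}}$-inverse precisely when $\hat{\alpha}(\bullet)\neq0$, in which case $\hat{\alpha}^{-1}$ is constructed recursively along the grading with base value $\hat{\alpha}(\bullet)^{-1}$ in degree zero. I would record this as the automorphism hypothesis, since $\rho(\bullet)=\bullet\otimes\bullet$ forces $(\hat{\alpha}\star a)(\bullet)=\hat{\alpha}(\bullet)\,a(\bullet)$, so that when $\hat{\alpha}(\bullet)=0$ the map $\hat{\alpha}\star$ merely annihilates the $\bullet$-component and fails to be surjective. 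This condition exactly parallels the $B$-series situation in Remark \ref{rmk:vectorfields}, where invertibility of $\alpha\star_{\mathcal{H}}$ likewise requires $\alpha(\bullet)\neq0$.
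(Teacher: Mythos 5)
Your proof is correct, and it takes a different (and more complete) route than the paper. The paper's entire proof is the single sentence that the claim ``follows from $B_{\hat{\alpha}}$ being a coshuffle morphism'': under the bijection $\delta$, exponential maps correspond to group-like elements for $\Delta_{\shuffle}$, and $\hat{\alpha}\star\delta(\omega)=\delta(B_{\hat{\alpha}}(\omega))$, so the coshuffle-morphism property established in the preceding lemma is exactly the statement that exponentiality is preserved. Your derivation of that same fact directly from $\rho(\omega_a\shuffle\omega_b)=\rho(\omega_a)\rho(\omega_b)$, using that $\hat{\alpha}$ is a character for $\centerdot$ while $a$ is a character for $\shuffle$, is the dual of the paper's argument and equally valid; it has the merit of never passing through the map $B_{\hat{\alpha}}$. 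The substantive difference is that you actually address bijectivity, which the paper's one-liner does not: you reduce it, via the comodule identities $(\Delta_{\mathcal{Q}}\otimes Id)\rho=(Id\otimes\rho)\rho$ and $(\epsilon\otimes Id)\rho=Id$ obtained by dualizing associativity and unitality of the module structure, to convolution-invertibility of $\hat{\alpha}$ as a character of $\mathcal{Q}$, and you correctly observe that since $\mathcal{Q}$ is graded but not connected (its degree-zero part being the polynomial algebra on the group-like element $\bullet$), this holds precisely when $\hat{\alpha}(\bullet)\neq 0$; the computation $(\hat{\alpha}\star a)(\bullet)=\hat{\alpha}(\bullet)\,a(\bullet)$ shows the condition is also necessary for surjectivity. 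That hypothesis is indeed absent from the statement as written, and the paper's proof silently ignores the issue; in practice the logarithmic maps arising in substitution satisfy $\alpha(\bullet)\neq 0$, so the omission is harmless, but your explicit identification of the needed normalisation and the recursive construction of $\hat{\alpha}^{-1}$ along the grading is a genuine strengthening of the paper's argument rather than a deviation from it.
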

\begin{proof}
This follows from $B_{\hat{\alpha}}$ being a coshuffle morphism.
\end{proof}

\section{A coaction of ordered forest contractions} 
\label{section::4}

In this section we shall describe the combinatorial picture of the coaction $\rho$. \\

Recall that the coaction $\rho$ is defined on ordered forests, as a sum over all the ways to obtain a forest $\omega$ by inserting Lie polynomials $\omega_1, \ldots, \omega_n$ into a forest $\omega'$. There is a bijection between vertices in $\omega$ and vertices in $\omega_1, \ldots, \omega_n$. We say that a partition of the vertices of a forest $\omega$ into subforests $\omega_1, \ldots, \omega_n$ is an admissible partition if there exists a forest $\omega'$ and a way to insert Lie brackets into each $\omega_i$ such that $\omega$ is a summand in $\omega_1 \cdots \omega_n \circ \omega'$. If $\omega_1 \cdots \omega_n$ is an admissible partition of $\omega$, we shall denote by $\omega/\omega_1 \cdots \omega_n$ the sum of all $\omega'$ such that $\omega$ is a summand in $\omega_1 \cdots \omega_n \circ \omega'$. We illustrate this with an example.

\begin{example}
The coaction $\rho$ maps the forest $\Forest{[1[3[4]][2]]}$ to:
\begin{align*}
\rho(\Forest{[[[]][]]})
&=\Forest{[[[]][]]} \otimes \bullet + \bullet \centerdot \bullet \centerdot \bullet \centerdot \bullet \otimes \Forest{[[[]][]]} + \bullet \centerdot \Forest{[[[]]]} \otimes \Forest{[[]]} + \bullet \centerdot \Forest{[[][]]} \otimes \Forest{[[]]}+\bullet \centerdot \bullet \centerdot \Forest{[[]]} \otimes 3 \Forest{[[][]]}+[\Forest{[]},\Forest{[[]]}]\centerdot \bullet \otimes \Forest{[[]]},
\end{align*}
corresponding to the following admissible partitions:
\begin{align*}
(\Forest{[1[3[4]][2]]}),(\Forest{[1]},\Forest{[2]},\Forest{[3]},\Forest{[4]}),(\Forest{[2]},\Forest{[1[3[4]]]}),(\Forest{[4]},\Forest{[1[3][2]]}),(\Forest{[2]},\Forest{[4]},\Forest{[1[3]]}),(\Forest{[1]},\Forest{[2]},\Forest{[3[4]]}),(\Forest{[1]},\Forest{[2]}\Forest{[3[4]]}).
\end{align*}
The righthand side of the tensors are the corresponding $\omega'$. \\
Note that the purpose of the labelling of the vertices is to distinguish the different vertices in a partition. It does not relate to any operadic structure, as the coaction is defined over unlabelled forests. \\
Furthermore note that the lefthand side of the tensors is in $S(Lie(\mathcal{PT}))$, while the subforests in the admissible partitions are in $\mathcal{OF}$.
\end{example}

We define a coaction $\Delta_W: \mathcal{OF} \to S(\mathcal{OF^+}) \otimes \mathcal{OF}$ by:
\begin{align*}
\Delta_W(\emptyset)=&1 \otimes \emptyset, \\
\Delta_W(\omega)=&\sum_{\omega_1 \cdots \omega_n \text{ admissible partition}} \omega_1 \centerdot \dots \centerdot \omega_n \otimes \omega/\omega_1\cdots \omega_n,
\end{align*}
where $(S(\mathcal{OF}^+),\centerdot)$ is the symmetric algebra of non-empty ordered forests. We can informally see this as the coaction obtained from $\rho$ by ''removing" the Lie brackets, as shown by the following example.

\begin{example}
\begin{align*}
\Delta_W(\Forest{[[[]][]]}) 
&=\Forest{[[[]][]]} \otimes \bullet + \bullet \centerdot \bullet \centerdot \bullet \centerdot \bullet \otimes \Forest{[[[]][]]} + \bullet \centerdot \Forest{[[[]]]} \otimes \Forest{[[]]} + \bullet \centerdot \Forest{[[][]]} \otimes \Forest{[[]]}+\bullet \centerdot \bullet \centerdot \Forest{[[]]} \otimes 3 \Forest{[[][]]}+\Forest{[]}\Forest{[[]]}\centerdot \bullet \otimes \Forest{[[]]}.
\end{align*}
\end{example}

The reason we may want to consider $\Delta_W$ instead of $\rho$ is because it eliminates the need to rewrite the logarithmic linear map $\alpha \in \mathcal{OF}^{\ast}$ into a map $\hat{\alpha}\in Lie(\mathcal{PT})^{\ast}$, as shown in the following proposition.

\begin{proposition}
Let $\alpha \in \mathcal{OF}^{\ast}$ be logarithmic and let $\hat{\alpha}\in Lie(\mathcal{PT})^{\ast}$ be $\alpha$ restricted to the Lie polynomials. Let $\beta \in \mathcal{OF}^{\ast}$ be arbitrary. Define a map $\star_W : S(\mathcal{OF}^+)^{\ast} \otimes \mathcal{OF}^{\ast} \to \mathcal{OF}^{\ast}$ by:
\begin{align*}
(x \star_W y)=(x \otimes y)\Delta_W,
\end{align*}
Extend $\alpha,\hat{\alpha}$ multiplicatively to $S(\mathcal{OF}^+)^{\ast}$ and $S(Lie(\mathcal{PT}))^{\ast}$, respectively, then:
\begin{align*}
\alpha \star_W \beta = \hat{\alpha} \star \beta.
\end{align*}
\end{proposition}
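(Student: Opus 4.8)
The plan is to first remove the arbitrary functional $\beta$. Because $(\alpha \star_W \beta)(\omega) = (\alpha \otimes \beta)\Delta_W(\omega) = \beta\big((\alpha \otimes \mathrm{Id})\Delta_W(\omega)\big)$ and likewise $(\hat\alpha \star \beta)(\omega) = \beta\big((\hat\alpha \otimes \mathrm{Id})\rho(\omega)\big)$, and since $\beta$ ranges over all of $\mathcal{OF}^{\ast}$ (already the dual-basis functionals separate points), the identity $\alpha \star_W \beta = \hat\alpha \star \beta$ is equivalent to the single operator identity
\[
(\alpha \otimes \mathrm{Id})\Delta_W = (\hat\alpha \otimes \mathrm{Id})\rho .
\]
So the whole statement reduces to comparing the two coactions once the logarithmic functional has been contracted against their first tensor legs.

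The key input is the structure of logarithmic maps. First I would recall that $\alpha$ is logarithmic exactly when it annihilates $\emptyset$ and every shuffle product $\omega_1 \shuffle \omega_2$, equivalently when $\delta^{-1}(\alpha)$ is primitive for $\Delta_{\shuffle}$, i.e.\ lies in the completion of $Lie(\mathcal{PT})$. Under the identification $(\mathcal{OF},\shuffle) \cong S(Lie(\mathcal{PT}))$, the augmentation ideal decomposes as the indecomposables $Lie(\mathcal{PT})$ together with the shuffle-decomposables $\mathcal{OF}^{+}\shuffle\mathcal{OF}^{+}$; let $P : \mathcal{OF} \to Lie(\mathcal{PT})$ denote the associated projection (the first Eulerian idempotent). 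Since $\alpha$ kills decomposables and agrees with $\hat\alpha$ on $Lie(\mathcal{PT})$, this yields $\alpha = \hat\alpha \circ P$ on $\mathcal{OF}^{+}$, and after multiplicative extension $\alpha = \hat\alpha \circ \widehat{P}$ on $S(\mathcal{OF}^{+})$, where $\widehat{P}: S(\mathcal{OF}^{+}) \to S(Lie(\mathcal{PT}))$ is the induced algebra map.

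Next I would prove that $\rho$ is nothing but $\Delta_W$ with $P$ applied to each block, that is $\rho = (\widehat{P} \otimes \mathrm{Id})\Delta_W$. Both coactions run over the same admissible partitions of the vertices of $\omega$ and the same contracted targets $\omega'$; the only difference is that $\Delta_W$ records each block as the ordered forest $\eta_i$ it forms, while $\rho$ records a Lie polynomial. Since inserting a Lie polynomial into a vertex is precisely the antisymmetrised insertion prescribed by $[\omega_a,\omega_b]=\omega_a\omega_b-\omega_b\omega_a$, I would argue that the Lie-monomial data attached to a block by $\rho$ is exactly the primitive projection $P(\eta_i)$ of the ordered forest recorded by $\Delta_W$. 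Granting this, the conclusion is immediate:
\[
(\hat\alpha \otimes \mathrm{Id})\rho = (\hat\alpha \otimes \mathrm{Id})(\widehat{P}\otimes\mathrm{Id})\Delta_W = \big((\hat\alpha\circ \widehat{P})\otimes\mathrm{Id}\big)\Delta_W = (\alpha\otimes\mathrm{Id})\Delta_W ,
\]
which is the operator identity from the first step.

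The hard part will be the blockwise identity $\rho = (\widehat{P}\otimes\mathrm{Id})\Delta_W$: one must match, with exact scalars, the Lie-bracket bookkeeping of the post-Lie module composition against the primitive projection of the plain ordered-forest composition. This is where the symmetrisation factor $\frac{1}{n!}$ appearing in $\rho$, the multiplicities produced by grafting edges in all admissible left-to-right orders, and the Eulerian-idempotent coefficients all have to be reconciled; I expect this to need an induction on the number of vertices in a block, comparing the recursive rules for left grafting on $Lie(\mathcal{PT})$ with their extension to $\mathcal{OF}$. If establishing the identity on the nose proves awkward, it is enough to verify it after pairing with $\hat\alpha$, for which the vanishing of $\alpha$ on shuffle-decomposables does most of the work.
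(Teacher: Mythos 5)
Your opening reduction to the operator identity $(\alpha\otimes\mathrm{Id})\Delta_W=(\hat\alpha\otimes\mathrm{Id})\rho$ is fine, and so is the decomposition $\mathcal{OF}^{+}=Lie(\mathcal{PT})\oplus(\mathcal{OF}^{+}\shuffle\,\mathcal{OF}^{+})$ together with the consequence $\alpha=\hat\alpha\circ P$ for the Eulerian projection $P$ (this is exactly where logarithmicity enters). The gap is the pivotal identity $\rho=(\widehat{P}\otimes\mathrm{Id})\Delta_W$, which you defer as ``the hard part'': it is not merely unproven, it is false, so no amount of bookkeeping will establish it. Compare the paper's own examples: for the block consisting of a single vertex $\bullet$ together with the two-vertex tree $\tau$, $\Delta_W$ records the ordered forest $\bullet\tau$ with coefficient $1$, while $\rho$ records the Lie monomial $[\bullet,\tau]$ with coefficient $1$; but the Eulerian idempotent gives $P(\bullet\tau)=\tfrac12[\bullet,\tau]$, since $\bullet\tau=\tfrac12[\bullet,\tau]+\tfrac12(\bullet\shuffle\tau)$. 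For blocks with $k$ trees the discrepancy is a more complicated Eulerian coefficient, and these factors do not cancel against anything else in your chain of equalities. Concretely, taking $\beta=\delta(\bullet)$ and $\omega=\bullet\tau$, one finds $(\alpha\star_W\beta)(\omega)=\alpha(\bullet\tau)$ while $(\hat\alpha\star\beta)(\omega)=\hat\alpha([\bullet,\tau])=\alpha(\bullet\tau)-\alpha(\tau\bullet)=2\alpha(\bullet\tau)$ if $\hat\alpha$ really is the literal restriction of $\alpha$ along $Lie(\mathcal{PT})\subset\mathcal{OF}$; under that reading the statement itself fails by a factor of $2$.

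What resolves this, and what the paper's proof implicitly uses, is the interpretation of $\hat\alpha$ given in the remark preceding the substitution theorem: $\hat\alpha$ is the dual-basis transfer of $\delta^{-1}(\alpha)\in Lie(\mathcal{PT})$, normalised so that $\hat\alpha([\omega_1,\omega_2])=\alpha(\omega_1\omega_2)=-\alpha(\omega_2\omega_1)$, rather than $\alpha(\omega_1\omega_2)-\alpha(\omega_2\omega_1)$. The paper then argues term by term rather than through a projection: the summands of $\rho(\omega)$ and $\Delta_W(\omega)$ are in bijection via admissible partitions, with identical right tensor legs, and on each block the bracket monomial, rewritten by Jacobi and antisymmetry so that its trees appear in the planar left-to-right order of $\omega$, pairs with $\hat\alpha$ to the same scalar as the corresponding concatenation pairs with $\alpha$. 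If you want to keep your projection-based formulation, you must replace $P$ by the actual block map $\eta\mapsto(\text{the bracket monomial that }\rho\text{ assigns to the block }\eta)$ and prove that $\hat\alpha$ composed with that map equals $\alpha$ on ordered forests; that is where the real content of the proposition lies, and it is not the Eulerian identity.
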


\begin{proof}
The coactions $\rho$ and $\Delta_W$ agree on every term where the lefthand side consists only of symmetric products of trees. The maps $\alpha$ and $\hat{\alpha}$ agree on trees. Use the Jacobi identity and the anti-symmetry of the Lie bracket to write all (nested) Lie brackets on the lefthand side of $\rho(\omega)$ in such a way that the left-to-right order of the trees in the Lie bracket agrees with the left-to-right order of the trees seen as subtrees in the planar embedding of $\omega$. If the lefthand side of the coaction $\rho$ contains a term with a Lie bracket $[\omega_1,\omega_2]$, then the corresponding term in $\Delta_W$ contains instead a term $\omega_1\omega_2$ (iterate for nested brackets). If $\hat{\alpha}$ evaluates to $c\in \mathbb{K}$ on $[\omega_1,\omega_2]$, then $\alpha$ evaluates to $c$ on $\omega_1\omega_2$ and to $-c$ on $\omega_2\omega_1$.  Every term in $\rho$ and $\Delta_W$ agrees on the right side.
\end{proof}

\begin{corollary}
Let $\alpha \in \mathcal{OF}^{\ast}$ be logarithmic. Extend $\alpha$ multiplicatively to $S(\mathcal{OF}^+)^{\ast}$, then:
\begin{align*}
LB(LB(a,\alpha),\beta)=LB(a,\alpha \star_W \beta).
\end{align*}
Furthermore, the map $\alpha \star_W : \mathcal{OF}^{\ast} \to \mathcal{OF}^{\ast}$ is an automorphism over the group of exponential linear maps.
\end{corollary}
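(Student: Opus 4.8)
The plan is to deduce the corollary almost entirely from results already established, assembling the substitution theorem together with the two propositions that immediately precede it. I would not redo any of the combinatorics.

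First I would prove the composition identity $LB(LB(a,\alpha),\beta)=LB(a,\alpha\star_W\beta)$ by chaining two equalities. The substitution theorem gives $LB(LB(a,\alpha),\beta)=LB(a,\hat{\alpha}\star\beta)$, where $\hat{\alpha}\in Lie(\mathcal{PT})^{\ast}$ is the restriction of the logarithmic map $\alpha$ to the Lie polynomials, extended to a character on $S(Lie(\mathcal{PT}))$. The preceding proposition gives, for the same $\alpha$ extended multiplicatively to $S(\mathcal{OF}^+)^{\ast}$, the identity $\alpha\star_W\beta=\hat{\alpha}\star\beta$ for every $\beta\in\mathcal{OF}^{\ast}$. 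Substituting this equality into the right-hand side of the theorem yields $LB(LB(a,\alpha),\beta)=LB(a,\alpha\star_W\beta)$, which is the first claim.

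For the automorphism statement, the key observation is that the preceding proposition holds for all $\beta$, so the two operators $\alpha\star_W$ and $\hat{\alpha}\star$ coincide as maps $\mathcal{OF}^{\ast}\to\mathcal{OF}^{\ast}$. The earlier proposition already established that $\hat{\alpha}\star$ is an automorphism over the group of exponential linear maps (which followed from $B_{\hat{\alpha}}$ being a coshuffle morphism). Since $\alpha\star_W=\hat{\alpha}\star$ pointwise, $\alpha\star_W$ inherits exactly the same property, giving the second claim.

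I expect no genuine obstacle here; the substantive work was carried out in the theorem and in the two propositions, and the corollary is purely a matter of matching hypotheses. The only point needing minor care is to confirm that the two distinct multiplicative extensions appearing in the statement — $\alpha$ extended to $S(\mathcal{OF}^+)^{\ast}$ and $\hat{\alpha}$ extended to $S(Lie(\mathcal{PT}))^{\ast}$ — are precisely the ones under which $\alpha\star_W\beta=\hat{\alpha}\star\beta$ was proved; but this compatibility is exactly the content of the preceding proposition, so it can be invoked directly rather than re-derived.
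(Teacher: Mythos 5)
Your proposal is correct and is exactly the argument the paper intends: the corollary is stated without proof precisely because it follows by combining the substitution theorem $LB(LB(a,\alpha),\beta)=LB(a,\hat{\alpha}\star\beta)$ with the immediately preceding proposition $\alpha\star_W\beta=\hat{\alpha}\star\beta$, and the automorphism claim transfers from the earlier proposition on $\hat{\alpha}\star$ since the two operators coincide. Your attention to matching the two multiplicative extensions is the right (and only) point of care.
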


We are now ready to formulate a combinatorial description of admissible partitions.

\begin{proposition} \label{prop::8}
Let $\omega$ be a forest and let $\omega_1 \cdots \omega_n$ be a partition of the vertices of $\omega$ into subforests. This partition is admissible if and only if the following conditions are met:
\begin{enumerate}
\item Each root in the same $\omega_i$ are either roots of $\omega$ or grafted onto the same vertex of $\omega$. Furthermore, the roots of $\omega_i$ are adjacent in the planar embedding of $\omega$.
\item If $e$ is an edge in an $\omega_i$ that goes between different vertices in $\omega_i$, then every edge $e'$ in $\omega$ that is outgoing from the same vertex as $e$ and is to the right of $e$ in the planar embedding, is also an edge between vertex of $\omega_i$.
\end{enumerate}
\end{proposition}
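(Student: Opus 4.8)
The plan is to derive both implications from the combinatorial description of the module composition established above, which expresses $\omega_1 \cdots \omega_n \circ \omega'$ as the sum over all ways of substituting each (bracket-expanded) block $\omega_i$ into the corresponding vertex of $\omega'$: the single incoming edge of that vertex becomes one incoming edge to each root of $\omega_i$, and its outgoing edges are grafted leftmost onto arbitrary vertices of $\omega_i$. For the ``only if'' direction I will read conditions (1) and (2) off an arbitrary summand of such a composition, and for the ``if'' direction I will, starting from a partition meeting (1) and (2), reconstruct a witnessing forest $\omega'$ by contracting the blocks and equip each block with a Lie bracket whose expansion returns $\omega$.

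For necessity, suppose the partition is admissible, so $\omega$ is a summand of $\omega_1 \cdots \omega_n \circ \omega'$ for some $\omega'$ and some bracketing. Let $v'$ be the vertex of $\omega'$ that was replaced by the block $\omega_i$. Its incoming edge (if any) originates from a single vertex of the parent block and, under the substitution, is distributed to every root of $\omega_i$; since it occupied one slot in the planar order, the roots of $\omega_i$ are consecutive and share this one parent vertex, or else $v'$ is a root and the roots of $\omega_i$ are roots of $\omega$. This is condition (1). Since every grafting is leftmost, the outgoing edges of $v'$ are inserted to the left of the edges already internal to $\omega_i$; hence at each vertex of the block the internal edges form a right-contiguous segment, which is precisely condition (2).

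For sufficiency, assume (1) and (2). I define $\omega'$ by contracting each block to one vertex and drawing an edge from the block of $v$ to the block $\omega_i$ whenever $\omega$ has an edge from $v$ to a root of $\omega_i$; condition (1) makes the parent block unique and its root-slot a single consecutive block, so the contraction is a well-defined planar forest whose child order is inherited from $\omega$. I then bracket each block: writing its induced subforest in native planar order as planar trees $T_1, \dots, T_m$, I take the left-nested bracket $[\,[\cdots[T_1,T_2],\dots],T_m\,]$, whose expansion under $[a,b]=ab-ba$ contains the word $T_1\cdots T_m$ with coefficient $+1$. Substituting the blocks back into $\omega'$ and using the derivation rule $\tau \graft [A,B]=[\tau \graft A,B]+[A,\tau \graft B]$ to route each external child onto the tree and vertex it occupies in $\omega$, I will isolate the summand in which the brackets expand to the native order and every external child is placed leftmost; by condition (2) this leftmost placement recreates the right-contiguous internal edges, so $\omega$ itself occurs as a summand.

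The step I expect to be the main obstacle is the verification inside the sufficiency direction: showing that a single choice of bracketing, together with the distribution of grafting over nested Lie brackets, produces $\omega$ with exactly the right planar order and the right attachment of each external child, and that the matching term does not cancel against another. The delicate interaction is between the signed expansion of the brackets and the leftmost grafting convention; condition (2) is exactly what aligns these, and once the blocks are recorded in native planar order I expect the matching to succeed term by term, but this bookkeeping is where the argument needs the most care.
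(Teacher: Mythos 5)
Your proof follows essentially the same route as the paper's: necessity is read off the combinatorial description of the module composition (a Lie-polynomial block attaches as a unit to a single parent vertex, and leftmost grafting forces the internal edges to be right-contiguous), and sufficiency is obtained by contracting the blocks to form $\omega'$ with the inherited planar order and inserting Lie brackets into each block. You are in fact slightly more explicit than the paper, which neither specifies the bracketing nor addresses the potential cancellation you flag; your left-nested choice $[\,[\cdots[T_1,T_2],\dots],T_m\,]$ and the observation that condition (2) aligns the signed bracket expansion with the leftmost-grafting convention supply exactly the detail the paper leaves implicit.
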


\begin{proof}
First suppose that the partition is admissible. Then there exists some forest $\omega'$ and some way to insert Lie brackets into each $\omega_i$ such that $\omega$ is a summand in $\omega_1 \cdots \omega_n \circ \omega'$. If a vertex $i$ is grafted on a vertex $j$ in $\omega'$, then this amounts to a Lie polynomial $\omega_i$ being grafted onto a Lie polynomial $\omega_j$ in $\omega_1 \cdots \omega_n \circ \omega'$. However, since $\omega_i$ is a Lie polynomial, the terms where different roots of $\omega_i$ goes onto different vertices of $\omega_j$ will cancel. Hence condition $1.$ is satisfied. Furthermore, the edges going from vertices in $\omega_j$ to vertices in $\omega_i$ must be to the left of edges going between vertices in $\omega_j$, since all grafting is done in the leftmost position. Hence condition $2$ is satisfied. \\
Now suppose that $\omega_1\cdots \omega_n$ is a partition that satisfies conditions $1.$ and $2.$ of the proposition. Let $\omega'$ denote the forest on $n$ vertices obtained by adding an edge from vertex $i$ to vertex $j$ if there is an edge from $\omega_i$ to $\omega_j$ in $\omega$. Condition $1.$ ensures that this is unambiguous. Now endow $\omega'$ with a planar embedding such that vertex $i$ is to the left of vertex $j$ if $\omega_i$ is to the left of $\omega_j$ in $\omega$. The choice of planar embedding is not unique. Now turn each $\omega_i$ into a Lie polynomial by insertion of Lie brackets and consider $\omega_1 \cdots \omega_n \circ \omega'$. This results in a sum over all ways to graft $\omega_i$ onto $\omega_j$ if there was an edge from $\omega_i$ to $\omega_j$ in $\omega$. Because of condition $2.$, the original placements of the edges from $\omega$ will appear in this sum.
\end{proof}

\begin{proposition}
Let $\omega$ be a forest and let $\omega_1 \cdots \omega_n$ be an admissible partition. Let $\omega'$ denote the forest on $n$ vertices obtained by adding an edge from vertex $i$ to vertex $j$ if there is an edge from $\omega_i$ to $\omega_j$ in $\omega$. Then $\omega/\omega_1\cdots\omega_n$ is the sum over all ways to endow the forest $\omega'$ with a planar embedding such that vertex $i$ is to the left of vertex $j$ if $\omega_i$ is to the left of $\omega_j$.
\end{proposition}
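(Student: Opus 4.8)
The plan is to reduce the statement to a multiplicity-preserving bijection, building on Proposition \ref{prop::8}. First I would record that, by the construction in the proof of Proposition \ref{prop::8} together with the combinatorial description of the module composition, the underlying non-planar structure of every forest $\omega'$ contributing to $\omega/\omega_1\cdots\omega_n$ is already fixed: there is an edge from vertex $i$ to vertex $j$ in $\omega'$ precisely when the roots of $\omega_i$ are grafted onto a vertex of $\omega_j$ in $\omega$, which by admissibility condition 1 is unambiguous. Thus $\omega/\omega_1\cdots\omega_n$ is supported on planar embeddings of this one abstract tree, and the whole content is to identify which embeddings occur and with what multiplicity.

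The core is a bijection between the summand-copies of $\omega$ that define the coefficients of $\omega/\omega_1\cdots\omega_n$ and the valid planar embeddings. A summand-copy of $\omega$ is the datum of a planar forest $\omega'$, a pairing of the inputs $\omega_1,\dots,\omega_n$ with its vertices (equivalently, a labelling of the vertices of a planar embedding by the subforests), and a choice, in the module composition $\omega_1\cdots\omega_n\circ\omega'$, of how the outgoing edges of each vertex are distributed over the vertices of the substituted subforest. For a \emph{fixed} labelled embedding I would observe that the distribution recovering $\omega$ is unique if it exists: each $\omega_i$ must be grafted onto the vertex $v(i)$ of its parent subforest on which it actually sits in $\omega$, so the distribution is forced to be $i\mapsto v(i)$.

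It then remains to show that this forced distribution is an admissible term of the composition and returns $\omega$ exactly when the embedding respects the stated order condition. For necessity, if two children $i,j$ of a vertex $k$ are grafted onto the \emph{same} vertex of $\omega_k$ with $\omega_i$ to the left of $\omega_j$ in $\omega$, then because the composition grafts in the leftmost position and preserves the left-to-right order of edges issuing from a common vertex, $i$ must precede $j$ in the embedding; this is exactly the order condition, read for siblings sharing an attaching vertex. For sufficiency, given an embedding respecting the condition, conditions 1 and 2 of Proposition \ref{prop::8} guarantee that the distribution $i\mapsto v(i)$ is available and that leftmost grafting reproduces the original placement of every edge, so $\omega$ is recovered. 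Combining the two, summand-copies of $\omega$ correspond bijectively to valid labelled embeddings, and summing over all planar shapes yields that $\omega/\omega_1\cdots\omega_n$ is the sum of the $\omega'$ over the valid embeddings.

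The step I expect to be the main obstacle is the multiplicity bookkeeping hidden in this bijection: the genuine multiplicities of $\omega/\omega_1\cdots\omega_n$ (illustrated by the coefficient $3$ in the running example) come from children of a vertex that are grafted onto \emph{different} vertices of the parent subforest, whose relative order is unconstrained and may be freely interleaved. One must check that each such interleaving corresponds to a distinct distribution of edges recovering $\omega$, and conversely that every distribution returning $\omega$ arises this way, while the within-group order (children sharing an attaching vertex) stays rigidly fixed. Organising the argument vertex-by-vertex, so that the count at each vertex is a single interleaving/distribution correspondence and the global statement follows by taking the product over vertices, is what makes the multiplicities match on the nose.
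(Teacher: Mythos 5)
Your argument is correct and follows essentially the same route as the paper's own proof: the forward inclusion via the construction already carried out in the proof of Proposition \ref{prop::8}, and the converse by ruling out any $\omega''$ with either the wrong edge set or a left-to-right order incompatible with that of the parts in $\omega$. Your extra bookkeeping --- the forced distribution $i\mapsto v(i)$ for a fixed labelled embedding, which pins down the multiplicities arising from sibling parts grafted onto \emph{distinct} vertices of the parent part --- is a refinement that the paper's proof leaves implicit.
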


\begin{proof}
It was shown in the proof of proposion \ref{prop::8} that each such embedding is a summand in $\omega/\omega_1 \cdots \omega_n$. Now suppose there exists an $\omega''$ such that $\omega$ is a summand in $\omega_1 \cdots \omega_n \circ \omega''$ but $\omega''$ is not such a planar embedding of $\omega'$. If $\omega''$ is a planar embedding of $\omega'$ with vertex $i$ to the right of vertex $j$ but $\omega_i$ is to the left of $\omega_j$, then it is clear that $\omega_1 \cdots \omega_n \circ \omega''$ cannot produce the planar embedding of $\omega$. If $\omega''$ does not have an edge from vertex $i$ to vertx $j$ but $\omega$ has an edge from $\omega_i$ to $\omega_j$, then it is also clear that $\omega_1 \cdots \omega_n \circ \omega''$ cannot produce $\omega$.
\end{proof}

\begin{proposition}
Extend $\Delta_W$ to be defined on symmetric products of forests, $\Delta_{W}: S(\mathcal{OF^+}) \to S(\mathcal{OF^+}) \otimes S(\mathcal{OF^+})$ by
\begin{align*}
\Delta_W(\omega_1 \centerdot \omega_2)=\Delta_W(\omega_1) \centerdot \Delta_W(\omega_2).
\end{align*}
Then $(S(\mathcal{OF}),\Delta_W,\epsilon)$ is a coalgebra.
\end{proposition}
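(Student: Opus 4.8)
The plan is to verify the two coalgebra axioms for $(S(\mathcal{OF}^+),\Delta_W,\epsilon)$ — that $\epsilon$ is a counit and that $\Delta_W$ is coassociative — where $\epsilon$ is the evident analogue of the counit of $\mathcal{Q}$, nonzero only on $1$ and on $\bullet$ and extended multiplicatively. Since $\Delta_W$ is extended from single forests by $\Delta_W(\omega_1\centerdot\omega_2)=\Delta_W(\omega_1)\centerdot\Delta_W(\omega_2)$ and both axioms are compatible with the commutative product $\centerdot$, it suffices to check them on a single non-empty forest $\omega$ and then extend multiplicatively.

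First I would dispatch the counit axiom, exactly as in the proof that $(S(Lie(\mathcal{PT})),\Delta_{\mathcal{Q}},\epsilon)$ is a coalgebra. In $(Id\otimes\epsilon)\Delta_W(\omega)$ the factor $\epsilon(\omega/\omega_1\cdots\omega_n)$ is nonzero only on the admissible partition with the single block $\omega_1=\omega$, whose quotient is $\bullet$; this term returns $\omega$. Dually, in $(\epsilon\otimes Id)\Delta_W(\omega)$ the left factor lies in the image of $\epsilon$ only for the all-singletons partition $\bullet\centerdot\cdots\centerdot\bullet$, which is admissible, and its quotient $\omega/\bullet\cdots\bullet$ is $\omega$ itself (the sum over compatible planar embeddings collapsing to the single embedding of $\omega$, since each block is one vertex). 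Hence both composites equal the identity.

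The substance is coassociativity, $(\Delta_W\otimes Id)\Delta_W=(Id\otimes\Delta_W)\Delta_W$. My approach is to read both sides as enumerations of two-stage admissible partitions of $\omega$ and to exhibit a bijection between them, using the combinatorial characterisation of admissible partitions and the description of $\omega/\omega_1\cdots\omega_n$ as a sum over planar embeddings compatible with the block order. On the left one first chooses an admissible partition $\omega_1\cdots\omega_n$ of $\omega$ and then, by multiplicativity of $\Delta_W$, an admissible partition of each block $\omega_i$; on the right one chooses an admissible partition of $\omega$ and then an admissible partition of the contracted forest $\omega/\omega_1\cdots\omega_n$. The key structural input is a transitivity property for contractions: refining the blocks of an admissible partition again yields an admissible partition, and contracting in two stages equals contracting the combined coarser partition in one stage. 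This is the precise combinatorial shadow of the operad associativity used to prove $\Delta_{\mathcal{Q}}$ coassociative. Indeed, because $\Delta_W$ is the Lie-bracket-free analogue of $\Delta_{\mathcal{Q}}$, obtained by replacing each bracket $[\omega_a,\omega_b]$ by the concatenation $\omega_a\omega_b$ normalised to the planar order exactly as in the proof that $\alpha\star_W\beta=\hat{\alpha}\star\beta$, one could alternatively deduce coassociativity of $\Delta_W$ from the already-established coaction identity for $\rho$.

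The hard part will be the planarity bookkeeping. Unlike the non-planar extraction–contraction setting, one must track simultaneously at both levels of the nesting that the finer partition's internal edges lie to the right of the coarser ones and that roots grafted on a common vertex stay adjacent and in order; equally, one must check that the two sums over compatible planar embeddings of the intermediate and of the final quotient match term by term, with no spurious multiplicities, noting that $\Delta_W$ carries no $\tfrac{1}{n!}$ normalisation since it is defined directly over unordered partitions. Establishing that the two-stage and one-stage contractions agree as elements of $S(\mathcal{OF}^+)$, planar embeddings included, is the crux; once this transitivity is in hand, the bijection between the two orders of refinement, and hence coassociativity, follows.
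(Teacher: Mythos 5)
Your proposal is correct and follows essentially the same route as the paper: the paper's proof likewise reduces everything to the counit check plus the transitivity property that refining each block of an admissible partition by admissible partitions again yields an admissible partition of $\omega$, with two-stage contraction agreeing with one-stage contraction. You spell out the planarity bookkeeping that the paper leaves implicit, but the key lemma and overall structure are the same.
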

\begin{proof}
The only non-trivial thing to show is:
\begin{align*}
(Id \otimes \Delta_W)\Delta_W=(\Delta_W \otimes Id)\Delta_W.
\end{align*}
This however follows from the fact that if $\omega_1, \dots, \omega_n$ is an admissible partition of $\omega$ and $\omega_i^1,\dots, \omega_i^{n_i}$ is an admissible partition of $\omega_i$, for $i=1,\dots,n$, then $\omega_1^1,\dots,\omega_1^{n_1},\omega_2^2,\dots,\omega_n^{n_n}$ is an admissible partition of $\omega$.
\end{proof}

\vspace{1cm}

{\bf{Acknowledgement:}}
The author is supported by the Research Council of Norway through project 302831 ''Computational Dynamics and Stochastics on Manifolds" (CODYSMA). This work was partially supported by the project Pure Mathematics in Norway, funded by Trond Mohn Foundation and Troms{\o} Research Foundation. \\
The author thanks Kurusch Ebrahimi-Fard and Hans-Munthe Kaas for the helpful discussions. He furthermore thanks Dominique Manchon for reading the paper and for his suggestions.

\bibliographystyle{acm}
\bibliography{LieButcherReferences}
\end{document}